\newcommand{\negr}[1]{\boldsymbol{#1}}
\newtheorem{defi}{\bf Definition}[section]
\newtheorem{lem}{\bf Lemma}[section]
\newtheorem{theo}{\bf Theorem}[section]
\newtheorem{cor}{\bf Corollary}[section]
\newtheorem{propo}{\bf Proposition}[section]
\newenvironment{proof}{\noindent\bf Proof. \rm}{\hfill $\mbox{\boldmath{$\square$}}$}
\def\LL{\mbox{{\sl I}}\!\mbox{{\sl L}}}
\def\fun{\longrightarrow}
\title{\bf  Principal and Boolean congruences on $\negr{\theta}$-valued {\L}ukasiewicz--Moisil algebras}
\author{A. V. Figallo, I. Pascual y A. Ziliani\\[3mm]
\small Instituto de Ciencias B\'asicas, Universidad Nacional de San Juan,\\
\small 5400 San Juan, Argentina.\\
\small Departamento de Matem\'atica, Universidad Nacional del Sur,\\
\small 8000 Bah\'{\i}a Blanca, Argentina. 
}
\date{}
\begin{document}

\maketitle

\thispagestyle{empty}

\begin{abstract}

The first system of many--valued logic was introduced by J. {\L}u\-ka\-sie\-wicz, his motivation was of philosophical nature as he was looking for an interpretation of the concepts of possibility and necessity. Since then, plenty of research has been developed in this area. In 1968, when Gr.C. Moisil came across Zadeh's fuzzy set theory he found the  motivation he had been looking for in order to legitimate the introduction and study of infinitely--valued {\L}ukasiewicz algebras, so he defined $\theta-$valued {\L}ukasiewicz algebras (or  ${\rm LM}_{\theta}-$algebras, for short) (without negation), where $\theta$ is the order type of a chain. 

In this article, our main interest is to investigate the principal and Boolean congruences and ${\theta}-$congruences on  ${\rm LM}_{\theta}-$al\-ge\-bras. In order to do this we take into account a topological duality for these algebras obtained in (A.V. Figallo, I. Pascual, A. Ziliani,  {\em  A Dua\-li\-ty for $\theta-$Valued {\L}ukasiewicz--Moisil Algebras and Aplicattions}. J. LMult-Valued Logic $\&$ Soft Computing. Vol. 16, pp 303-322. (2010). Furthermore, we prove that the intersection of two principal ${\theta}-$con\-gruen\-ces is a principal one. On the other hand, we show that Boolean congruences are both principal congruences and  ${\theta}-$con\-gruen\-ces. This allowed us to establish necessary and sufficient conditions so that a principal congruence is a Boolean one. Finally, bearing in mind the above results, we characterize the principal and Boolean congruences on $n-$valued {\L}ukasiewicz algebras without negation when we consider them as ${\rm LM}_{\theta}-$algebras in the case that $\theta$ is an integer $n$, $n \geq 2$. 

\

{\bf  Key words.} ${\theta-}$valued {\L}ukasiewicz--Moisil algebras,  principal congruences, Boolean congruences, Priestley spaces.

\end{abstract}

\section{\textbf \large Introduction}\label{s01}\indent %(1)

In 1940, Gr.C. Moisil defined the $3-$valued and the $4-$valued {\L}u\-ka\-sie\-wicz algebras and in 1942, the $n-$valued {\L}ukasiewicz algebras ($n\geq 2$). His goal was to study {\L}ukasiewicz's logic from the algebraic point of view. 
It is well-known that these algebras are not the algebraic counterpart of $n-$valued  {\L}ukasiewicz propositional calculi for $n\geq 5$ (\cite{BOI,RC1}). R. Cignoli (\cite{RC3}) found algebraic counterparts
for $n\geq 5$ and he called them proper $n-$valued {\L}ukasiewicz algebras.  On the other hand, in 1968, Gr.C. Moisil (\cite{GrM2}) introduced $\theta-$valued {\L}ukasie\-wicz algebras (without negation), where $\theta$ is the order type of a chain. These structures were thought by Moisil as models of a logic with infinity nuances, but the need to find a strong motivation for them delayed the announcement. The motivation was found when Moisil came across Zadeth's fuzzy set theory, in which he saw a confirmation of his old ideas.

\vspace{3mm}

For a general account of the origins and the theory of {\L}ukasiewicz many valued logics and {\L}ukasiewicz algebras the reader is referred to \cite{BOI,COM2,AI}.

\vspace{3mm}

This paper is organized as follows: in Section 2, we summarize the principal notions and results of $\theta-$valued {\L}uka\-sie\-wicz algebras (\cite{BOI}), in par\-ti\-cu\-lar the topological duality for these algebras obtained in  \cite{FPZ2}. 
In  Section  3, we characterize  the open subsets of the dual space associated with an ${\rm LM}_{\theta}-$algebra which determine both the principal ${\rm LM}_{\theta}$ and $\theta{\rm LM}_{\theta}-$con\-gruen\-ces. This last result enables us to  prove  that the intersection of two principal $\theta{\rm LM}_{\theta}-$con\-gruen\-ces is a principal one. Furthermore, whenever  $\theta$ is an  integer $n$, $n \geq 2$, we obtain the filters which  determine  principal congruences on $n-$valued {\L}ukasiewicz algebras (or ${\rm LM}_{n}-$algebras) and, we are also in a position to show that the intersection of two principal ${\rm LM}_{n}-$con\-gruen\-ces is a  principal one.
In  Section 4, attention is focused on Boolean  congruences. Firstly, we characterize them by means of certain closed and open subsets of the associated space. These results allow us to prove that the  boolean ${\rm LM}_{\theta}$ and $\theta{\rm LM}_{\theta}-$con\-gruen\-ces coincide, and also that they are principal congruences associated with filters generated by Boolean  elements of these algebras.

\section{\textbf \large Preliminaries}\label{s02}\indent %(2)

In this paper, we take for granted the concepts and results on distributive lattices, universal algebra, {\L}uka\-sie\-wicz algebras  and Priestley duality. To obtain more information on these topics, we direct the reader to the bibliography indicated in  \cite{BD,CBOI,BOI, SB.HS, HP1, HP2, HP3}. However, in order to simplify the reading, we will summarize  the main notions and  results  we needed throughout this paper.

\vspace{2mm}

In what follows, if $X$ is a partially ordered set and $Y \subseteq X$ we will denote by $[Y)$ ($(Y]$) the set of all $x\in X$ such that $y\leq x$ ($x\leq y$) for some $y\in Y$, and we will say that $Y$ is increasing (decreasing) if $Y= [Y)$ ($Y=(Y]$). In particular, we will write  $[y)$ ($(y]$) instead of $[\{y\})$  ($(\{y\}]$). Furthermore, if $x,y \in X$ and $x\leq y$, a segment is the set $\{z\in X: x\leq z\leq y\}$ which will be denoted by $[x,y]$.  

\vspace{2mm}

Let $\theta \geq 2$ be the order type of a totally ordered set $J$ with least element $0$ being   $J = \{0\}+ I$ (ordinal sum). Following V. Boisescu et al (\cite{BOI}) recall that:

\vspace{2mm}

A $\theta-$valued {\L}ukasiewicz--Moisil algebra (or LM$_{\theta}-$algebra) is an algebra \linebreak  $\langle A,\vee,\wedge,0,1,\{\phi_i\}_{i\in I},\{\overline {\phi}_i\}_{i\in I}\rangle$ of   type $(2,2,0,0,\{1\}_{i\in I},\{1\}_{i\in I})$ where $\langle A,\vee,\wedge,$ $0,1 \rangle$ is a bounded distributive lattice and for all $i \in I$, $\phi_{i}$ and $\overline{\phi}_i$ satisfy the following conditions:

\begin{itemize}
\item[{\rm(L1)}] $\phi_i$ is an endomorphism of bounded distributive lattices,
\item[{\rm (L2)}] $\phi_i x \vee \overline{\phi_i}x=1$,\, $\phi_ix \wedge\overline{\phi_i}x= 0$,
\item[{\rm(L3)}] $\phi_i \phi_j x= \phi_j x$,
\item[{\rm(L4)}] $i\leqslant j$ implies $\phi_i x \leqslant \phi_j x$, 
\item[{\rm(L5)}] $\phi_ix=\phi_iy$  for all $i\in I$ imply $x=y$.
\end{itemize}

\vspace{2mm}

It is well known that there are LM$_{\theta}-$congruences (or congruences) on  LM$_{\theta}-$al\-ge\-bras such that the quotient algebra doesn't satisfy the determination principle (L5). That is the reason why a new notion was defined as follows: a $\theta$LM$_{\theta}-$congruence (or ${\theta}-$congruence) on an  LM$_{\theta}-$algebra is  a bounded distributive lattice congruence $\vartheta$ such that $(x,y)\in \vartheta$ if and only if  $(\phi_ix, \phi_iy)\in \vartheta$  for all $i\in I$. (\cite{CBOI,AI}). 

\vspace{2mm}

The following characterization of the  Boolean elements of an  LM$_{\theta}-$algebra will be useful for the study of  the Boolean congruences on these algebras: 

\begin{itemize}
\item[{\rm (L6)}] Let $A$  be  an  LM$_{\theta}-$algebra and let $C(A)$ be the set of all Boolean elements of $A$. Then, for each $x \in A$, the following conditions are equivalent:

\begin{itemize}
\item[{\rm (i)}] $x \in C(A)$,
\item[{\rm (ii)}] there are $y \in A$ and $i \in I$ such that $x = \phi_i y$ ($x = \overline{\phi}_i y$), 
\item[{\rm (iii)}]there is  $i_0 \in I$ such that $x = \phi_{i_0} x$ ($x = \overline{\phi}_{i_0} x$),
\item[{\rm (iv)}] for all $i \in I$, $x = \phi_i x$  ($x = \overline{\phi}_i x$).
\end{itemize}
\end{itemize}

\vspace{2mm}

In \cite{FPZ2}, we extended Priestley duality to LM$_{\theta}-$algebras considering $\theta-$va\-lued {\L}ukasiewicz--Moisil spaces (or l$_{\theta}$P-spaces) and $l_{\theta}P-$functions. More pre\-ci\-se\-ly, 

\vspace{2mm}

A\, $\theta-$valued {\L}ukasiewicz--Moisil space {\rm(}or $l_{\theta}P-$space{\rm )} is a pair $(X,\{f_i\}_{i\in I})$ provided the following conditions are satisfied:

\begin{itemize}
\item[{\rm(lP1)}] $X$ is a Priestley space, {\rm (}{\rm \cite{HP1,HP2,HP3})}
\item[{\rm(lP2)}] $f_i: X \to X $ is a continuous function,
\item[{\rm(lP3)}] $x \leqslant y$ implies $f_i(x)= f_i(y)$, 
\item[{\rm(lP4)}] $i \leqslant j$ implies $f_i(x)\leqslant f_j(x)$,
\item[{\rm(lP5)}] $f_i \circ f_j=f_i$.
\item[{\rm(lP6)}]$\bigcup\limits_{i\in I}f_i(X)$ is dense in $X$ {\rm(}i.e. $\overline {\bigcup\limits_{i\in I}f_i(X)} = X$, where  $\overline Z$ denotes the closure of $Z${\rm )}.
\end{itemize}

\vspace{2mm}

An\, $l_{\theta}P-$ function from an $l_{\theta}P-$ space $(X,\{f_i\}_{i\in I})$ into another $(X^{\prime},\{f_i^{\prime}\}_{i\in I})$ is an increasing continuous function $f$ from  $X$ into $X^{\prime}$ satisfying $f_i^{\prime}\circ f = f\circ f_i$ for all $i\in I$. 

\vspace{2mm}

It is worth mentioning that condition (lP6) is equivalent to the following one:
\begin{itemize}
\item[{\rm(lP7)}] If $U$ and $V$ are increasing closed and open subsets of $X$ and $f_i^{-1}(U)= f_i^{-1}(V)$ for all  $i\in I$, then $U=V$ (\cite{FPZ2}).
\end{itemize}

Besides, if $(X,\{f_i\}_{i\in I})$ is an  $l_{\theta}P-$space, then for all $x \in X$, the following properties are satisfied (\cite{FPZ2}):

\begin{itemize}
\item[{\rm(lP8)}] $x \leqslant f_i(x)$ or $f_{i}(x) \leqslant x$ for all $i\in I $, 
\item[{\rm(lP9)}] $f_0(x)\leq x$  and   $f_0(x)$ is the unique minimal element in $X$ that precedes $x$,
\item[{\rm(lP10)}] $x \leq f_1(x)$  and  $f_1(x)$ is the unique  maximal element in $X$ that follows  $x$.

Furthermore, the above properties allow us to assert that

\item[(lP11)] $X$ is the cardinal sum of the sets  $[\{f_i(x)\}_{i \in I})\cup (\{f_i(x)\}_{i \in I}]$ for $x \in X$. If $I$ has least element $0$ and  greatest element $1$,  then $X$ is the cardinal sum of the sets $[f_0(x), f_1(x)]$ for $x \in X$. 
\end{itemize}

\vspace{2mm}

Although in \cite{FPZ2} we developed a topological duality for LM$_{\theta}-$algebras, next we will describe  some  results of it with the aim of fixing the notation we are about to use in this paper.  

\vspace{2mm}

\begin{itemize}
\item[(A1)] If $(X,\{f_i\}_{i\in I})$ is an $l_{\theta}P-$space and $D(X)$ is the lattice of all increasing closed and open subset of $X$, then ${\LL}_{\theta}= \langle D(X),\cup,\cap, \emptyset, X, \{\phi_i^X \}_{i \in I},\\ \{\overline{\phi}_i^{X}\}_{i \in I}\rangle$ is an  ${\rm LM}_{\theta}-$algebra, where the operations $\phi_i^X$ and $\overline{\phi}_i^{X}$ are defined by means of the formulas: $\phi_i^{X}(U)= {f_i}^{-1}(U)$ and $\overline{\phi}_i^{X}(U)= X \setminus 
 {f_i}^{-1}(U)$ for all $U \in D(X)$ and for all $i \in I$. 

\item[(A2)] If $\langle A,\vee,\wedge,0,1,\{\phi_i\}_{i\in I},\{\overline {\phi}_i\}_{i\in I}\rangle$ is an ${\rm LM}_{\theta}-$algebra and $X(A)$ is the set of all prime filters of $A$, ordered by inclusion  and with the to\-po\-lo\-gy having as a sub--basis the sets $\sigma_{A}(a)= \{ P \in X(A):  a \in P \}$ and \, $X(A)\setminus \sigma_{A}(a)$ for each $a \in A$, then ${\L}_{\theta}= (X(A),\{ f_i^{A}\}_{i \in I})$ is the $l_{\theta}P-$space associated with $A$, where the functions $f_i^{A}: X(A) \fun X(A)$ are defined by the prescription: $f_i^{A}(P)= {\phi_i}^{-1}(P)$ for all $i \in I$ and for all $P \in X(A)$. 
\end{itemize}
Then the category of $l_{\theta}P-$spaces and  $l_{\theta}P-$func\-tions  is naturally equivalent to the dual of the category of LM$_{\theta}-$algebras and their corresponding homomorphism, where the isomorphisms $\sigma_{A}$ and $\epsilon_X$ are the corresponding natural equivalences.

\vspace{2mm}

In addition, this duality allowed us to to characterize the ${\rm LM}_{\theta}$  and  $\theta{\rm LM}_{\theta}-$ congruences on these algebras for which we introduced these notions: 

\begin{itemize}
\item[(A3)] A subset $Y$ of $X$ is semimodal if $\bigcup\limits_{i \in I}f_i(Y)\subseteq  Y$, and $Y$ is a $\theta-$subset of $X$ if $\bigcup\limits_{i \in I}f_i(Y)\subseteq  Y \subseteq  \overline{\bigcup\limits_{i \in I}f_i(Y)}$. 
Hence, for each closed $\theta-$subset $Y$ of $X$ we have that $Y =\overline{\bigcup\limits_{i \in I}f_i(Y)}$ (\cite{FPZ2}). 
\end{itemize}

Then, we proved that

\begin{itemize}
\item[(A4)]  The lattice ${\cal C}_{S}({\L}_{\theta}(A))$ of all closed and semimodal subsets of ${\L}_{\theta}(A)$ is isomorphic to the dual lattice $Con_{LM_{\theta}}(A)$ of all congruences on $A$,  and the isomorphism is the function 
$\Theta_{S}$ defined by the prescription 

\item[]$\Theta_S (Y)=\{(a,b)\in  A\times A: \sigma_{A}(a)\cap Y= \sigma_{A}(b) \cap Y\}$
\item[]\hspace{1.1cm} $=\{(a,b) \in  A\times A:  \sigma_{A}(b)\triangle \sigma_{A}(a) \cap Y = \emptyset \}$ (\cite[Theorem 2.1.1]{FPZ2}).

\item[(A5)] The lattice ${\cal C}_{\theta}({\L}_{\theta}(A))$ of all closed $\theta-$subsets of ${\L}_{\theta}(A)$ is isomorphic to the dual lattice  $Con_{\theta LM_{\theta}}(A)$ of all ${\theta}-$congruences on $A$, and the isomorphism is the function $\Theta_{\theta}$ defined as in (A4) (\cite[Theorem 2.1.2]{FPZ2}).
\end{itemize}

\vspace{2mm}

Finally, we will emphasize the following properties of Priestley spaces and so, $l_{\theta}P-$spaces which will be quite useful in order to characterize the principal congruences on {\rm LM$_\theta-$}algebras. 

\begin{itemize}
\item[(A6)] $[A)$ and $(A]$ are closed subsets of $X$, for each closed subset $A$ of $X$. 
\vspace{3mm}

\item[(A7)] $R$ is a closed, open and  convex subset of $X$ if and only if there are $U, V \in D(X)$ such that $V \subseteq U$ and $R = U \setminus V$.
\end{itemize}

\section{\textbf Principal congruences}\label{s03}\indent %(3)

In this section our first objective is to characterize the principal ${\rm LM}_{\theta}$ and $\theta{\rm LM}_{\theta}-$congruences on an ${\rm LM}_{\theta}-$algebra by means of certain open subsets of its associated ${\rm l}_{\theta}$P--space.

\vspace{2mm}

\begin{theo}\label{T21}
Let  $A$ be an ${\rm LM}_{\theta}-$algebra and let ${\L}_{\theta}(A)$ be the ${\rm l}_{\theta}$P--space associated with $A$. Then, it holds:

\begin{itemize}
\item[{\rm(i)}] the lattice ${\cal O}_{CS}(X(A))$ of all open subsets of $X(A)$ whose complements are semimodal is  isomorphic to the lattice $Con_{LM_{\theta}}(A)$ of all congruences on $A$, and the isomorphism is the function $\Theta_{OS}$  defined by the prescription  $\Theta_{OS}(G)=\{(a,b)\in A\times A: \sigma_A(b) \triangle \sigma_A(a)\subseteq 
 G\}$.    
  
\item[{\rm(ii)}] The lattice ${\cal O}_{C\theta}(X(A))$ of all open subsets of $X(A)$ whose complements are $\theta-$subsets of $X(A)$ is  isomorphic to the lattice $Con_{\theta LM_{\theta}}(A)$ of all ${\theta}-$con\-gruences on $A$  and the isomorphism is the function $\Theta_{O\theta}$ defined as in {\rm(i)}.  
\end{itemize}
\end{theo}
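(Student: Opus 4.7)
The plan is to derive Theorem \ref{T21} directly from the already stated characterizations (A4) and (A5) by passing to complements. Since (A4) tells us that $\mathcal{C}_S(\L_\theta(A))$ is isomorphic to the \emph{dual} of $Con_{LM_\theta}(A)$ via $\Theta_S$, and complementation $Y \mapsto X(A)\setminus Y$ is an order-reversing bijection between closed and open subsets of $X(A)$, the composition of these two order-reversals should give an order-preserving isomorphism. The main work is therefore just to identify the image of complementation and to check that the closed-form expression for $\Theta_{OS}$ is the one obtained by this transport.

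First, I would verify that complementation restricts to a bijection between $\mathcal{C}_S(X(A))$ and $\mathcal{O}_{CS}(X(A))$: this is tautological from the definition of $\mathcal{O}_{CS}$, since $G \in \mathcal{O}_{CS}$ iff $G$ is open and $X(A)\setminus G$ is closed and semimodal. Next, I would translate the formula for $\Theta_S$. Given $G \in \mathcal{O}_{CS}(X(A))$, setting $Y = X(A)\setminus G$ we have, for any $a,b \in A$,
\[
(a,b) \in \Theta_S(Y) \iff \bigl(\sigma_A(b)\triangle\sigma_A(a)\bigr)\cap Y = \emptyset \iff \sigma_A(b)\triangle\sigma_A(a)\subseteq G,
\]
so $\Theta_{OS}(G) = \Theta_S(X(A)\setminus G)$. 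Since $\Theta_S$ is a bijection onto $Con_{LM_\theta}(A)$ by (A4), so is $\Theta_{OS}$.

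It remains to check that $\Theta_{OS}$ preserves the lattice structure. Here I would use that $\Theta_S$ is a \emph{dual} isomorphism: if $G_1 \subseteq G_2$ in $\mathcal{O}_{CS}(X(A))$, then $X(A)\setminus G_2 \subseteq X(A)\setminus G_1$ in $\mathcal{C}_S(X(A))$, hence $\Theta_S(X(A)\setminus G_1) \subseteq \Theta_S(X(A)\setminus G_2)$ by the dualizing property, i.e.\ $\Theta_{OS}(G_1) \subseteq \Theta_{OS}(G_2)$. The two order-reversals cancel, yielding an order isomorphism, and since the two lattices involved are complete, an order isomorphism is automatically a lattice isomorphism.

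Part (ii) follows by exactly the same argument, replacing (A4) by (A5), $\mathcal{C}_S$ by $\mathcal{C}_\theta$, and $\Theta_S$ by $\Theta_\theta$; the identification of the complement of a closed $\theta$-subset as an element of $\mathcal{O}_{C\theta}(X(A))$ is again immediate from the definitions. I do not expect any genuine obstacle: the whole content of the theorem is bookkeeping on top of (A4) and (A5). The only subtle point worth writing out carefully is the translation of the equality $\sigma_A(a)\cap Y = \sigma_A(b)\cap Y$ into the symmetric-difference form $\sigma_A(b)\triangle\sigma_A(a)\subseteq G$, which is precisely what makes the complement description natural.
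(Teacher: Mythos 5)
Your proposal is correct and follows exactly the paper's own route: the paper likewise derives the theorem directly from (A4) and (A5) via the complementation bijection between closed and open subsets, using the identities $\Theta_{OS}(G)=\Theta_{S}(X(A)\setminus G)$ and $\Theta_{O\theta}(G)=\Theta_{\theta}(X(A)\setminus G)$. Your additional care in checking the symmetric-difference reformulation and the cancellation of the two order-reversals merely spells out details the paper leaves implicit.
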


\begin{proof}
It is a direct consequence of (A4) and (A5), bearing in mind that there is a one-to-one correspondence between the closed and open subsets of a topological space and that $\Theta_{OS}(G)= \Theta_{S}(X(A) \setminus G)$ and  $\Theta_{O\theta}(G) = \Theta_{\theta}(X(A) \setminus G)$.   
\end{proof}

\

From now on, we will denote by $\Theta(a,b)$ and $\Theta_\theta (a,b)$ the principal  ${\rm LM}_{\theta}$ and $\theta{\rm LM}_{\theta}-$congruence generated by $(a,b)$, respectively.

\vspace{2mm}

\begin{propo}\label{C21}
Let  $A$ be an  ${\rm LM}_{\theta}-$algebra and let ${\L}_{\theta}(A)$ be the ${\rm l}_{\theta}$P--space associated with $A$. Then the following conditions are equivalent for all $a,b \in A$, $a \leqslant b$:  
 
\begin{itemize}
\item[\rm{(i)}]  $\Theta(a,b)= \Theta_{OS}(G)$ for some $G \in {\cal O}_{CS}(X(A))$,
 \item[ \rm{(ii)}] $G$ is the least element of ${\cal O}_{CS}(X(A))$,  ordered by inclusion, which contains $\sigma_A(b) \setminus \sigma_A(a)$,
\item[\rm{(iii)}] $G = (\sigma_A(b) \setminus \sigma_A(a))\cup \bigcup\limits_{\scriptstyle i \in I}{{f_i}^A}^{-1}(\sigma_A(b) \setminus \sigma_A(a))$,
\item[\rm{(iv)}] $G =(\sigma_A(b) \setminus \sigma_A(a))\cup \bigcup\limits_{\scriptstyle i \in I}\sigma_A(\phi_i b  \wedge \overline{\phi}_i a) $.
\end{itemize}
\end{propo}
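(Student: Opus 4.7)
The plan is to establish the chain (i)$\Leftrightarrow$(ii)$\Leftrightarrow$(iii)$\Leftrightarrow$(iv), exploiting at each step the duality provided by Theorem \ref{T21} and the axioms of an $l_\theta P$--space. A preliminary remark is that, since $a\leqslant b$, we have $\sigma_A(a)\subseteq \sigma_A(b)$ and therefore $\sigma_A(b)\triangle \sigma_A(a) = \sigma_A(b)\setminus \sigma_A(a)$; this is the set which plays the role of ``principal generator'' throughout the argument. Let us write $D := \sigma_A(b)\setminus \sigma_A(a)$ for brevity.

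For (i)$\Leftrightarrow$(ii), I would use only the fact that $\Theta_{OS}$ is a lattice isomorphism, hence order preserving in both directions. From the prescription defining $\Theta_{OS}$ we have $(a,b)\in \Theta_{OS}(G)$ if and only if $D\subseteq G$. Since $\Theta(a,b)$ is, by definition, the smallest congruence to which $(a,b)$ belongs, the isomorphism sends it to the smallest $G\in {\cal O}_{CS}(X(A))$ containing $D$, which is exactly (ii).

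For (ii)$\Leftrightarrow$(iii), set $G_0 := D\cup \bigcup_{i\in I}({f_i^A})^{-1}(D)$ and verify it is the least element of ${\cal O}_{CS}(X(A))$ above $D$. It is open, being a union of open sets (each $({f_i^A})^{-1}(D)$ is open since $f_i^A$ is continuous by (lP2) and $D$ is open by (A7)). For semimodality of its complement: if $x\notin G_0$, then $x\notin D$ and $f_i^A(x)\notin D$ for every $i\in I$; applying (lP5) in the form $f_i^A\circ f_j^A = f_i^A$, one obtains $f_i^A(f_j^A(x))=f_i^A(x)\notin D$, so that $f_j^A(x)\notin G_0$ as well. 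For minimality, let $G\in {\cal O}_{CS}(X(A))$ with $D\subseteq G$; given $x\in ({f_i^A})^{-1}(D)$ we have $f_i^A(x)\in G$, so $f_i^A(x)\notin G^{c}$, and since $G^c$ is semimodal, $x\in G^c$ would force $f_i^A(x)\in G^c$, a contradiction. I expect the only delicate point of the whole proof to be the careful use of (lP5) in the semimodality check, since one has to pick the right composite.

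Finally, for (iii)$\Leftrightarrow$(iv), I would rewrite each summand $({f_i^A})^{-1}(D)$ using the duality from (A1): since $\phi_i^X(\sigma_A(c))=({f_i^A})^{-1}(\sigma_A(c))=\sigma_A(\phi_i c)$, one gets $({f_i^A})^{-1}(D) = \sigma_A(\phi_i b)\setminus \sigma_A(\phi_i a)$. Because $\phi_i a$ is a Boolean element of $A$ (by (L6)(ii)) with Boolean complement $\overline{\phi}_i a$ (by (L2)), the image $\sigma_A(\overline{\phi}_i a)$ is precisely $X(A)\setminus \sigma_A(\phi_i a)$, and consequently $\sigma_A(\phi_i b)\setminus \sigma_A(\phi_i a)=\sigma_A(\phi_i b)\cap \sigma_A(\overline{\phi}_i a)=\sigma_A(\phi_i b\wedge \overline{\phi}_i a)$. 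Substituting this into the expression from (iii) yields (iv), and the four equivalences are established.
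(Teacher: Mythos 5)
Your proposal is correct and follows essentially the same route as the paper: (i)$\Leftrightarrow$(ii) via the order isomorphism $\Theta_{OS}$ from Theorem \ref{T21}, (ii)$\Leftrightarrow$(iii) by checking that $D\cup\bigcup_{i\in I}({f_i^A})^{-1}(D)$ lies in ${\cal O}_{CS}(X(A))$ (using (lP2), (lP5)) and is forced into any semimodal-complemented open set containing $D$, and (iii)$\Leftrightarrow$(iv) by translating through $\sigma_A$. The only difference is that you spell out the (iii)$\Leftrightarrow$(iv) computation that the paper leaves as a direct consequence of $\sigma_A$ being an isomorphism.
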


\begin{proof} 
(i) \, $\Rightarrow $ \, (ii): By Theorem \ref{T21} we have that $\sigma_A(b)\setminus\sigma_A(a)\subseteq  G$. On the other hand, suppose that there is  $H \in {\cal O}_{CS}(X(A))$  such that $\sigma_A(b)\setminus \sigma_A(a)\subseteq  H$. Then, $(a,b)\in \Theta_{OS}(H)$ and so, $\Theta_{OS}(G) \subseteq \Theta_{OS}(H)$. This assertion and Theorem \ref{T21} imply that $G \subseteq H$.

\vspace{2mm}

(ii)\, $\Rightarrow$\, (i): From the  hypothesis and  Theorem \ref{T21} we conclude that $(a,b)\in \Theta_{OS}(G)$. Furthermore,  if $ \varphi \in Con_{LM_{\theta}}(A) $ and  $(a, b)\in \varphi$, then by  Theorem \ref{T21} we have that  $\varphi=\Theta_{OS}(H)$ for some $H \in {\cal O}_{CS}(X(A))$. From these last assertions and the fact that $a \leq b$ we infer that $\sigma_A(b)\setminus \sigma_A(a)\subseteq  H$ and so, by the hypothesis we conclude that $G \subseteq H$. This means that $\Theta_{OS}(G) \subseteq \Theta_{OS}(H)=\varphi$, which allows us to assert that $\Theta_{OS}(G)=\Theta(a,b)$.

\vspace{2mm}

(ii)\, $\Rightarrow$\, (iii): From the  hypothesis we have that $(\sigma_A(b) \setminus \sigma_A(a))\cup \bigcup\limits_{ \scriptstyle i \in I}{{f_i}^A}^{-1}$ $(\sigma_A(b) \setminus \sigma_A(a) )\subseteq G$. Besides, from (lP2) and (lP5)  we infer that  $(\sigma_A(b) \setminus \sigma_A(a))\cup \bigcup\limits_{\scriptstyle i \in I}{{f_i}^A}^{-1}(\sigma_A(b) \setminus \sigma_A(a))\in {\cal O}_{CS}(X(A))$ and hence, by (ii) we conclude that $G = (\sigma_A(b) \setminus \sigma_A(a)) \cup \bigcup\limits_{\scriptstyle i \in I}{{f_i}^A }^{-1}(\sigma_A(b) \setminus \sigma_A(a))$.

\vspace{2mm}

(iii)\, $\Rightarrow$\, (ii): From (lP2) and (lP5) we have that $G\in {\cal O}_{CS}(X(A))$. Suppose now, that there is $H \in{\cal O}_{CS}(X(A))$ and that $\sigma_A(b)\setminus \sigma_A(a)\subseteq H$. Since $X(A)\setminus H$ is semimodal, we infer that ${f^A_i }^{-1}(\sigma_A(b) \setminus \sigma_A(a))\subseteq H$ for all $i \in I$ from which it follows that
 $G \subseteq H$.  

\vspace{2mm}

(iii)\, $\Leftrightarrow$\, (iv): It is a direct consequence from the fact that $\sigma_A$ is an isomorphism.
\end{proof}

\begin{cor}\label{C23}
Let  $A$ be an  ${\rm LM}_{\theta}-$algebra and let ${\L}_{\theta}(A)$ be the ${\rm l}_{\theta}$P--space associated with $A$. Then the following conditions are equivalent: 

\begin{itemize}
\item[\rm{(i)}] $G \in {\cal O}_{CS}(X(A))$ and $\Theta_{OS}(G)$ is a principal congruence on $A$,
\item[\rm{(ii)}] there is a closed, open and convex subset $R$ of $X(A)$ such that  $G = R \cup\bigcup\limits_{\scriptstyle i \in I}{f_i^A }^{-1}(R)$.
\end{itemize}
\end{cor}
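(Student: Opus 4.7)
The plan is to derive this corollary as a bridge between Proposition \ref{C21} (which describes the set $G$ corresponding to a principal congruence generated by a comparable pair $a\leq b$) and property (A7) (which characterizes closed, open and convex subsets as differences $U\setminus V$ of increasing closed and open sets). The whole proof should be short: in one direction we read off $R$ from the generators of the principal congruence, in the other we pull generators $a,b\in A$ out of $R$ via $\sigma_A$.

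For (i) $\Rightarrow$ (ii), I would start from the fact that $\Theta_{OS}(G)$ is principal, say $\Theta_{OS}(G)=\Theta(a,b)$. Since $\Theta(a,b)=\Theta(a\wedge b,a\vee b)$, I can replace $(a,b)$ by a comparable pair and assume $a\leqslant b$. Then Proposition \ref{C21}\,(iii) gives
\[
G=(\sigma_A(b)\setminus\sigma_A(a))\cup\bigcup_{i\in I}{f_i^A}^{-1}(\sigma_A(b)\setminus\sigma_A(a)).
\]
Setting $R=\sigma_A(b)\setminus\sigma_A(a)$, I just need to observe that $R$ is closed, open and convex: this is exactly (A7), since $\sigma_A(a),\sigma_A(b)\in D(X(A))$ with $\sigma_A(a)\subseteq\sigma_A(b)$.

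For (ii) $\Rightarrow$ (i), starting from a closed, open and convex $R$, (A7) produces $U,V\in D(X(A))$ with $V\subseteq U$ and $R=U\setminus V$. Since $\sigma_A\colon A\to D(X(A))$ is an isomorphism, there exist $a,b\in A$ with $\sigma_A(a)=V$, $\sigma_A(b)=U$; the inclusion $V\subseteq U$ translates into $a\leqslant b$. Substituting back into the formula for $G$ gives
\[
G=(\sigma_A(b)\setminus\sigma_A(a))\cup\bigcup_{i\in I}{f_i^A}^{-1}(\sigma_A(b)\setminus\sigma_A(a)),
\]
which by Proposition \ref{C21}\,(iii)$\Rightarrow$(i) yields $\Theta_{OS}(G)=\Theta(a,b)$, a principal congruence. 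The fact that $G\in\mathcal{O}_{CS}(X(A))$ follows because it is expressed as in Proposition \ref{C21}\,(iii), using (lP2) and (lP5) to see that the complement of $\bigcup_{i\in I}{f_i^A}^{-1}(R)$ is semimodal (this is already implicit in the proof of (iii)$\Rightarrow$(ii) of that proposition).

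I do not foresee any real obstacle: the corollary is essentially a repackaging of Proposition \ref{C21} once one uses (A7) to translate ``closed, open and convex'' into ``$\sigma_A(b)\setminus\sigma_A(a)$ for some $a\leqslant b$''. The only point that requires a brief justification is the reduction to the case $a\leqslant b$ in direction (i) $\Rightarrow$ (ii), since Proposition \ref{C21} was stated only for comparable pairs.
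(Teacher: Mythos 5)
Your proof is correct and follows essentially the same route as the paper, which simply cites (A7), Proposition \ref{C21}, and the fact that $\sigma_A$ is an ${\rm LM}_{\theta}$--isomorphism; your write-up merely makes the translation $R=\sigma_A(b)\setminus\sigma_A(a)$ explicit in both directions. The one point you add beyond the paper's one-line argument --- the reduction to a comparable pair via $\Theta(a,b)=\Theta(a\wedge b,a\vee b)$ --- is a standard fact and is handled correctly.
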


\begin{proof} 
It follows immediately from (A7) and Proposition \ref{C21} taking into account that $\sigma_A$ is an ${\rm LM}_{\theta}-$isomorphism.
\end{proof}

\

Next, bearing in mind the above results we will obtain different descriptions of the elements of ${\cal O}_{C\theta}(X(A))$ by means of the duality which will be useful later on.

\begin{propo}\label{P25} 
Let  $A$ be an  ${\rm LM}_{\theta}-$algebra and let ${\L}_{\theta}(A)$ be the ${\rm l}_{\theta}$P--space associated with $A$. Then the following conditions are equivalent for all $a,b \in A$, $a \leqslant b$:  
  
\begin{itemize}
\item[\rm{(i)}] $H\in {\cal O}_{C\theta}(X(A))$ and $\Theta_{O\theta}(H) = \Theta_\theta (a,b)$,

\item[\rm{(ii)}] $H = X(A)\setminus \overline{\bigcup\limits_{\scriptstyle i \in I}f^A_i(X(A)\setminus G)}$, 
 where  $G =(\sigma_A(b)\setminus \sigma_A(a)) \cup \bigcup\limits_{\scriptstyle i \in I}{f^A_i}^{-1}(\sigma_A(b) \setminus \sigma_A(a))$, 

\item[\rm{(iii)}] $H$ is the least element of ${\cal O}_{C\theta}(X(A)$, ordered by inclusion, which contains $\sigma_A(b) \setminus \sigma_A(a)$,

\item[\rm{(iv)}] $H = X(A) \setminus \overline {\bigcup\limits_{\scriptstyle i \in I} f^A_i(\bigcap\limits_{\scriptstyle i \in I}{f^A_i}^{-1}(X(A )\setminus (\sigma_A(b) \setminus \sigma_A(a))))}$,

\item[\rm{(v)}] $H = X(A) \setminus \overline {\bigcup\limits_{\scriptstyle i \in I} f^A_i (\bigcap\limits_{\scriptstyle i \in I} \sigma_A (\phi_i a \vee \overline{\phi}_ib))}$.
\end{itemize}
\end{propo}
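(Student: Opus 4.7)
The plan is to mirror the four-way equivalence of Proposition~\ref{C21}, systematically replacing Theorem~\ref{T21}(i) with Theorem~\ref{T21}(ii) and closed semimodal subsets with closed $\theta$-subsets. First, I would establish (i)~$\Leftrightarrow$~(iii): since $\Theta_{O\theta}$ is a lattice isomorphism by Theorem~\ref{T21}(ii), the principal $\theta$-congruence $\Theta_\theta(a,b)$ corresponds to the minimum element $H \in {\cal O}_{C\theta}(X(A))$ with $(a,b) \in \Theta_{O\theta}(H)$. Using $a \leqslant b$, so $\sigma_A(a) \subseteq \sigma_A(b)$, this reduces to $H$ being the minimum member of ${\cal O}_{C\theta}(X(A))$ containing $\sigma_A(b) \setminus \sigma_A(a)$, by exactly the two-way argument used in (i)~$\Leftrightarrow$~(ii) of Proposition~\ref{C21}.

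The central step is (iii)~$\Leftrightarrow$~(ii), where I identify this minimum concretely. Dually, its complement should be the largest closed $\theta$-subset of $X(A)$ contained in $X(A) \setminus (\sigma_A(b) \setminus \sigma_A(a))$. Letting $W = X(A) \setminus G$ denote the largest closed semimodal subset contained in the same complement (from Proposition~\ref{C21}), my candidate is $Z := \overline{\bigcup_{i \in I} f_i^A(W)}$. Maximality is easy: any closed $\theta$-subset $Y$ contained in $X(A) \setminus (\sigma_A(b) \setminus \sigma_A(a))$ is in particular closed and semimodal, so $Y \subseteq W$, and by (A3) $Y = \overline{\bigcup_i f_i^A(Y)} \subseteq Z$. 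Checking that $Z$ itself is admissible is more delicate: $Z \subseteq W$ because $W$ is closed and semimodal; the semimodality $\bigcup_i f_i^A(Z) \subseteq Z$ follows from $Z \subseteq W$; and the required density $Z \subseteq \overline{\bigcup_i f_i^A(Z)}$ is obtained by applying each $f_i^A$ to $\bigcup_j f_j^A(W) \subseteq Z$ and invoking (lP5) to conclude $\bigcup_i f_i^A(W) \subseteq \bigcup_i f_i^A(Z)$.

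For (ii)~$\Leftrightarrow$~(iv), I would set $E = \sigma_A(b) \setminus \sigma_A(a)$ and $M = \bigcap_{i \in I} {f_i^A}^{-1}(X(A) \setminus E)$. One direction is immediate from $W = (X(A) \setminus E) \cap M \subseteq M$. For the reverse, given $p \in f_j^A(M)$ with $p = f_j^A(q)$, the identity $f_i^A \circ f_j^A = f_i^A$ from (lP5) yields $f_i^A(p) = f_i^A(q) \notin E$ for every $i$ (so $p \in M$), and also $p = f_j^A(q) \notin E$ (so $p \in W$); moreover $p = f_j^A(f_j^A(q)) = f_j^A(p) \in f_j^A(W)$, giving $\bigcup_i f_i^A(M) \subseteq \bigcup_i f_i^A(W)$. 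Finally, (iv)~$\Leftrightarrow$~(v) is a direct algebraic translation: from ${f_i^A}^{-1}(\sigma_A(c)) = \sigma_A(\phi_i c)$ and $X(A) \setminus {f_i^A}^{-1}(\sigma_A(c)) = \sigma_A(\overline{\phi}_i c)$, together with $X(A) \setminus E = (X(A) \setminus \sigma_A(b)) \cup \sigma_A(a)$, distributing ${f_i^A}^{-1}$ gives ${f_i^A}^{-1}(X(A) \setminus E) = \sigma_A(\phi_i a \vee \overline{\phi}_i b)$, so that $M = \bigcap_i \sigma_A(\phi_i a \vee \overline{\phi}_i b)$.

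I expect the main obstacle to be the reverse inclusion in (ii)~$\Leftrightarrow$~(iv), namely $\bigcup_i f_i^A(M) \subseteq \bigcup_i f_i^A(W)$, which forces each $p \in f_j^A(M)$ to land in $f_j^A(W)$ via itself as its own image under $f_j^A$. This manoeuvre, together with the density verification for $Z$ in (iii)~$\Rightarrow$~(ii), crucially exploits property (lP5), which turns each $f_i^A$ into a retraction-like map whose range consists of its fixed points.
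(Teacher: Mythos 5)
Your proposal is correct and follows essentially the same route as the paper: identifying $X(A)\setminus H$ with the greatest closed $\theta$-subset avoiding $\sigma_A(b)\setminus\sigma_A(a)$, realizing it as $\overline{\bigcup_{i\in I}f^A_i(X(A)\setminus G)}$, and using (lP5) to show $f^A_j\bigl(\bigcap_{i\in I}{f^A_i}^{-1}(X(A)\setminus E)\bigr)=f^A_j(X(A)\setminus G)$ for the passage to (iv). Your verification that $Z=\overline{\bigcup_{i\in I}f^A_i(X(A)\setminus G)}$ is itself a $\theta$-subset is in fact slightly more explicit than the paper's, which asserts this without detail.
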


\begin{proof} 
(i) \, $\Rightarrow $ \, (ii): 
Let us observe that $\Theta (a,b) \subseteq \Theta_\theta (a,b)$. Hence, by Theorem \ref{T21} we have that there are $G \in {\cal O}_{CS}(X(A))$ and  $H \in {\cal O}_{C\theta}(X(A)$ such that $\Theta (a,b) = \Theta_{OS}(G)$,  $\Theta_\theta (a,b)= \Theta_{O\theta}(H)$ and  $G \subseteq H$. Then, by  Proposition \ref{C21} we infer that  $G =(\sigma_A(b) \setminus \sigma_A(a))\cup \bigcup\limits_{\scriptstyle i \in I}{f^A_i}^{-1}(\sigma_A(b) \setminus \sigma_A(a))$. Besides, it holds that $\Theta_\theta (a,b)$ is the least $\theta-$congruence on $A$ which contains  $\Theta (a,b)$. Hence, from these statements and Theorem \ref{T21} we have that  $X(A) \setminus H$ is the greatest closed $\theta-$subset of $X(A)\setminus G$. On the other hand, taking into account that $X(A)\setminus G$ is semimodal, we conclude that
$\overline {\bigcup\limits_{\scriptstyle i \in I} f^A_i ( X(A)\setminus G)}$ is the greatest closed $\theta-$subset of $X(A)\setminus G$. Hence, from the last assertions we conclude that $X(A) \setminus H = \overline {\bigcup\limits_{\scriptstyle i \in I} f^A_i ( X(A)\setminus G)}$ and so, $H = X(A) \setminus \overline {\bigcup\limits_{\scriptstyle i \in I} f^A_i ( X(A)\setminus G)}$.

\vspace{2mm}

(ii)\, $\Rightarrow$\, (iii): It is easy to check that $H\in {\cal O}_{C\theta}(X(A)$. Furthermore, since $X(A)\setminus G$ is semimodal, we have that $\overline {\bigcup\limits_{\scriptstyle i \in I} f^A_i (X(A)\setminus G)}\subseteq X(A)\setminus G$ and so, $G \subseteq H$. This condition and the hypothesis imply that $\sigma_A(b)\setminus \sigma_A(a)\subseteq H$. On the other hand, if $W \in {\cal O}_{C\theta}(X(A)$ and $\sigma_A(b)\setminus \sigma_A(a)\subseteq W$, then $H\subseteq W$. Indeed, since $X(A)\setminus W$ is semimodal, from Proposition \ref{C21} we have that $G \subseteq W$ which implies that $\overline {\bigcup\limits_{\scriptstyle i \in I} f^A_i (X(A)\setminus W)} \subseteq  \overline {\bigcup\limits_{\scriptstyle i \in I} f^A_i (X(A)\setminus G)}$. This last statement and the fact that $X(A)\setminus W$ is a closed $\theta-$subset of $X(A)$ we infer that $X(A)\setminus W \subseteq \overline {\bigcup\limits_{ \scriptstyle i \in I} f^A_i (X(A)\setminus G)}$ which completes the proof.  

\vspace{2mm}

(iii)\, $\Rightarrow$\, (i): This follows using an analogous reasoning to the proof of (ii) $\Rightarrow$ (i) in Proposition \ref{C21}. 

\vspace{2mm}

(ii)\, $\Leftrightarrow$\, (iv): We only prove that 
$f^A_i((X(A)\setminus (\sigma_A(b) \setminus \sigma_A(a))) \cap \bigcap\limits_{\scriptstyle i \in I} {f^A_i}^{-1}\\  (X(A) \setminus (\sigma_A(b)\setminus \sigma_A(a))))= f^A_i (\bigcap\limits_{\scriptstyle i \in I}{f^A_i}^{-1}(X (A)\setminus (\sigma_A(b)\setminus \sigma_A(a) )))$ for all  $i \in  I$, from which the proof follows immediately. 
Indeed, suppose that  $y = f_j(x)$ for some $j\in I$ and that $f_i(x)\not\in \sigma_A(b)\setminus \sigma_A(a)$  for all $i \in I$. Then, by (lP5) we infer that  $f_i(y) = f_i(x)$ and $f_i(y)\not\in \sigma_A(b)\setminus \sigma_A(a)$  for all $i \in I$. From these statements we have that $y \in (X(A)\setminus (\sigma_A(b) \setminus \sigma_A(a)))\cap \bigcap\limits_{  \scriptstyle i \in I} {f^A_i}^{-1}(X (A) \setminus (\sigma_A(b) \setminus \sigma_A(a))$ and so, $y \in f^A_j ((X(A)\setminus (\sigma_A(b)\setminus \sigma_A(a))) \cap \bigcap\limits_{\scriptstyle i \in I}{f^A_i}^{-1} ( X (A)\setminus (  \sigma_A(b) \setminus \sigma_A(a))))$. Therefore, we conclude that $f^A_j (\bigcap\limits_{\scriptstyle i \in I}{f^A_i}^{-1} (X (A)\setminus (\sigma_A(b) \setminus \sigma_A(a) )))$  $\subseteq  f^A_j ( (X(A)\setminus (\sigma_A(b) \setminus \sigma_A(a))) \cap \bigcap\limits_{\scriptstyle i \in I} {f^A_i}^{-1} (X (A)\setminus (\sigma_A(b) \setminus \sigma_A(a))))$ for all $j \in I$. The other inclusion is obvious.  

\vspace{2mm}

(iv)\, $\Leftrightarrow$\, (v): It is routine.  
\end{proof}

\begin{cor}\label{P26} 
Let  $A$ be an  ${\rm LM}_{\theta}-$algebra and let ${\L}_{\theta}(A)$ be the ${\rm l}_{\theta}$P--space associated with $A$. Then the following conditions are equivalent:

\begin{itemize}
\item[\rm{(i)}]  $\Theta_{O\theta}(H)$ is a principal ${\theta}-$congruence on $A$, 
\item[\rm{(ii)}] there is a closed, open and convex subset  $R$ of $X(A)$ such that
\item[] $H = X(A)\setminus \overline {\bigcup\limits_{\scriptstyle i \in I} f^A_i(\bigcap\limits_{\scriptstyle i \in I } {f^A _i}^{-1}(X(A) \setminus R))}$.
\end{itemize}
\end{cor}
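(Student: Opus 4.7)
The plan is to derive this corollary from Proposition \ref{P25} in exact parallel with the way Corollary \ref{C23} was obtained from Proposition \ref{C21}. The only tools needed, apart from Proposition \ref{P25} itself, are property (A7), which describes closed, open and convex subsets of a Priestley space as differences $U\setminus V$ with $V\subseteq U$ in $D(X)$, and the fact that $\sigma_A$ is an ${\rm LM}_{\theta}$-isomorphism between $A$ and $D(X(A))$.

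For the implication (i) $\Rightarrow$ (ii), I would start by writing $\Theta_{O\theta}(H) = \Theta_\theta(a,b)$ for suitable $a,b\in A$. Since a principal $\theta$-congruence generated by a pair equals the one generated by the meet and the join of that pair, I may assume $a\leq b$. The equivalence (i) $\Leftrightarrow$ (iv) of Proposition \ref{P25} then yields explicitly
$$H \;=\; X(A)\setminus \overline{\bigcup_{i\in I} f^A_i\Bigl(\bigcap_{i\in I} {f^A_i}^{-1}\bigl(X(A)\setminus(\sigma_A(b)\setminus\sigma_A(a))\bigr)\Bigr)}.$$
Setting $R:=\sigma_A(b)\setminus\sigma_A(a)$, we have $R=U\setminus V$ with $V=\sigma_A(a)\subseteq U=\sigma_A(b)$ and $U,V\in D(X(A))$, so by (A7), $R$ is a closed, open and convex subset of $X(A)$; substituting it into the formula above gives (ii).

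For (ii) $\Rightarrow$ (i), I would apply (A7) in the reverse direction: write the given $R$ as $U\setminus V$ with $U,V\in D(X(A))$ and $V\subseteq U$. Since $\sigma_A$ is an ${\rm LM}_{\theta}$-isomorphism onto $D(X(A))$, there exist $a,b\in A$ with $\sigma_A(a)=V$ and $\sigma_A(b)=U$; the inclusion $V\subseteq U$ forces $a\leq b$, and $R=\sigma_A(b)\setminus\sigma_A(a)$. Plugging this $R$ into the formula in (ii) puts $H$ into the shape (iv) of Proposition \ref{P25}, so the equivalence (iv) $\Rightarrow$ (i) of that proposition gives $\Theta_{O\theta}(H)=\Theta_\theta(a,b)$, which is principal.

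I do not expect any real obstacle, since the whole argument is a direct transfer through the duality already established. The only small subtlety is the initial reduction to the case $a\leq b$ when one starts from an arbitrary generating pair of the principal $\theta$-congruence, but this is a standard lattice-theoretic observation and is exactly the same step implicitly used in the companion Corollary \ref{C23}.
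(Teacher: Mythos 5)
Your argument is correct and is exactly the route the paper takes: its proof of this corollary simply cites Proposition \ref{P25} and (A7), and your write-up fills in precisely those details (the passage through item (iv) of Proposition \ref{P25} and the identification of closed, open, convex sets with differences $\sigma_A(b)\setminus\sigma_A(a)$ for $a\leq b$). The reduction to $a\leq b$ via meet and join is the same implicit standard step used throughout the paper, so nothing is missing.
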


\begin{proof} 
It is an immediate consequence of Proposition \ref{P25} and (A7).
\end{proof}

\begin{cor}\label{C36} 
Let  $A$ be an  ${\rm LM}_{\theta}-$algebra. Then the intersection of two principal $\theta-$congruences is a principal one. 
\end{cor}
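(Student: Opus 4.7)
The strategy is to transfer the question through the Priestley duality of Theorem \ref{T21}(ii). Replacing $(a_k,b_k)$ with $(a_k\wedge b_k,a_k\vee b_k)$ if necessary, assume $a_k\leq b_k$ for $k=1,2$. Let $H_k\in{\cal O}_{C\theta}(X(A))$ be the open set with $\Theta_{O\theta}(H_k)=\Theta_\theta(a_k,b_k)$; by Corollary \ref{P26},
\[
H_k \;=\; X(A)\setminus\overline{\bigcup_{i\in I}f^A_i(W_k)},\qquad W_k\;=\;\bigcap_{i\in I}(f^A_i)^{-1}(X(A)\setminus R_k),
\]
with $R_k=\sigma_A(b_k)\setminus\sigma_A(a_k)$ closed, open and convex (cf.\ (A7)).

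Next I would show $H_1\cap H_2\in{\cal O}_{C\theta}(X(A))$: its complement is $Y_1\cup Y_2$ with each $Y_k$ a closed $\theta$-subset, and, using that closure distributes over finite unions and $\bigcup_i f^A_i$ distributes over unions, one checks that $Y_1\cup Y_2$ is again a closed $\theta$-subset. A short calculation yields $H_1\cap H_2=X(A)\setminus\overline{\bigcup_i f^A_i(W_1\cup W_2)}$; since $\Theta_{O\theta}$ is a lattice isomorphism, this open set corresponds to $\Theta_\theta(a_1,b_1)\cap\Theta_\theta(a_2,b_2)$.

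The heart of the proof is then to recast $H_1\cap H_2$ in the form of Corollary \ref{P26}(ii); equivalently, by Proposition \ref{P25}(iii), to produce a pair $c\leq d$ in $A$ such that $H_1\cap H_2$ is the smallest element of ${\cal O}_{C\theta}(X(A))$ containing $\sigma_A(d)\setminus\sigma_A(c)$. Then $\Theta_\theta(c,d)$ will be the required principal representation of the intersection. I would search for $(c,d)$ by projecting to the Boolean subalgebra $C(A)$: a $\theta$-congruence on $A$ is completely determined by its restriction to $C(A)$ (by axioms (L5),(L6)), and the restriction of $\Theta_\theta(a_k,b_k)$ is the Boolean congruence on $C(A)$ whose $1$-class is the filter $\langle\phi_i a_k\vee\overline{\phi}_i b_k:i\in I\rangle$. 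Hence the intersection restricts to the Boolean congruence whose filter is generated by
$\{(\phi_i a_1\vee\overline{\phi}_i b_1)\vee(\phi_j a_2\vee\overline{\phi}_j b_2):i,j\in I\}$,
and the task reduces to finding a single pair $(c,d)\in A\times A$, $c\leq d$, so that the single-indexed family $\{\phi_i c\vee\overline{\phi}_i d:i\in I\}$ generates the same Boolean filter in $C(A)$.

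The main obstacle is exhibiting this pair and verifying the filter identity: natural candidates are lattice combinations of $a_1,a_2,b_1,b_2$ such as $c=(a_1\vee b_2)\wedge(b_1\vee a_2)$ together with a matching $d$, or $c=a_1\vee a_2$ and a compensating meet, and the verification requires the full strength of the Moisil axioms (L1)--(L5) together with the topological properties (lP2),(lP5) of the dual space.
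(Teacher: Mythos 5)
Your setup coincides with the paper's: both pass through Theorem \ref{T21}(ii), write $\vartheta_k=\Theta_{O\theta}(H_k)$ with $H_k=X(A)\setminus\overline{\bigcup_{i\in I}f^A_i(W_k)}$, $W_k=\bigcap_{i\in I}{f^A_i}^{-1}(X(A)\setminus R_k)$, and reduce to showing that $H_1\cap H_2$ has the same shape. But at the decisive point your proposal has a genuine gap: you abandon the topological computation and reroute through the Boolean skeleton $C(A)$, ending with ``the main obstacle is exhibiting this pair and verifying the filter identity'' --- i.e.\ the step that actually proves the corollary is not carried out, and the candidate you float, $c=(a_1\vee b_2)\wedge(b_1\vee a_2)$, is not the right one (the pair suggested by the dual space is $d=b_1\wedge b_2$, $c=(a_1\vee a_2)\wedge b_1\wedge b_2$, coming from $R_1\cap R_2=(U_1\cap U_2)\setminus((V_1\cup V_2)\cap U_1\cap U_2)$ via (A7)).

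The idea you missed is that Corollary \ref{P26}(ii) characterizes principal $\theta$-congruences by an \emph{arbitrary} closed, open and convex subset $R$ of $X(A)$; there is no need to return to a pair $(c,d)$ at all. The paper simply computes
\[
\overline{\textstyle\bigcup_{i\in I}f^A_i\bigl(W_1\cup W_2\bigr)}
=\overline{\textstyle\bigcup_{i\in I}f^A_i\bigl(\bigcap_{i\in I}({f^A_i}^{-1}(X(A)\setminus R_1)\cup {f^A_i}^{-1}(X(A)\setminus R_2))\bigr)}
=\overline{\textstyle\bigcup_{i\in I}f^A_i\bigl(\bigcap_{i\in I}{f^A_i}^{-1}(X(A)\setminus(R_1\cap R_2))\bigr)},
\]
using (lP5) for the middle identification, and then observes that $R_1\cap R_2$ is again closed, open and convex, so Corollary \ref{P26} applies directly to $H_1\cap H_2$. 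Your detour through $C(A)$ could in principle be made to work (the restriction of a $\theta$-congruence to $C(A)$ does determine it, by (L5)--(L6)), but it replaces a two-line set computation by an unproved filter-generation identity in a Boolean algebra with an infinite generating family, which is strictly harder than what it is meant to establish. As it stands, the proof is incomplete.
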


\begin{proof}
Let $\vartheta_1$ and $\vartheta_2$ be principal $\theta-$congruences on $A$. Then, by Corollary \ref{P26}, 
there are closed, open and convex subsets $R_1$ and $R_2$ of $X(A)$ such that $ \vartheta_1 = \Theta_{O\theta}(H_1)$ and 
$\vartheta_2=\Theta_{O\theta}(H_2)$, where $H_1 = X(A)\setminus \overline {\bigcup\limits_{\scriptstyle i \in I} f^A_i(  \bigcap\limits_{\scriptstyle i \in I} {f^A _i }^{-1}( X(A) \setminus R_1))}$ and  $H_2 = X(A)\setminus\overline {\bigcup\limits_{\scriptstyle i \in I} f^A_i (\bigcap\limits_{\scriptstyle i \in I} {f^A _i }^{-1}(X(A) \setminus R_2))}$.
Bearing in mind Theorem \ref{T21} we infer that $\vartheta_1\cap\vartheta_2 = \Theta_{O\theta}(H_1 \cap H_2)$.
On the other hand, we have that $H_1 \cap H_2 = X(A)\setminus \overline{\bigcup\limits_{\scriptstyle i \in I} f^A_i(\bigcap\limits_{\scriptstyle j \in I} {f^A _j }^{-1}(X(A)\setminus R_1)}$ $\overline{\cup \bigcap\limits_{\scriptstyle k \in I} {f^A _k }^{-1}(X(A) \setminus R_2))}= X(A)\setminus  \overline {\bigcup\limits_{\scriptstyle i \in I} f^A_i (\bigcap\limits_{\scriptstyle i \in I}({f^A _i }^{-1}( X(A)\setminus R_1) \cup {f^A _i }^{-1}( X(A)}$ $\overline{\setminus R_2)))}= X(A)\setminus \overline {\bigcup\limits_{\scriptstyle i \in I} f^A_i (\bigcap\limits_{\scriptstyle i \in I}({f^A _i}^{-1}(X(A) \setminus (R_1 \cap R_2)))}$. From these last equalities  and the fact that $R_1\cap R_2$ is a closed, open and convex subset of $X(A)$ we conclude, by Corollary \ref{P26}, that $\vartheta_1\cap\vartheta_2$ is a principal $\theta-$congruence on  $A$. 
 \end{proof}

\

In the sequel, we will determine sufficient conditions for the intersection of two principal congruences  is not  a principal one, in the particular case that the  ${\rm l}_{\theta}$P--space associated with an ${\rm LM}_{\theta}-$algebra is  the cardinal sum  of an arbitrary but not finite set of segments or  by \cite[Corollary 2.1.5, Theorem 2.2.2]{FPZ2} when  this is  isomorphic  to a subdirect  product of an arbitrary  but not finite set of subalgebras  of the ${\rm LM}_{\theta}-$al\-ge\-bra ${B_2}^{[I]}$.

\vspace{2mm}

\begin{propo}\label{P317} 
Let $\varphi_1=\Theta_{OS}(G_1)$ and $\varphi_2=\Theta_{OS}(G_2)$  principal congruences on $A$ where $G_j= R_j\cup \bigcup\limits_{\scriptstyle i \in I} {f^A_i}^{-1}(R_j)$ and $R_j$ is a closed, open and convex subset of $X(A)$, $1\leq j \leq 2$. If $(R_1 \setminus (R_2  \cup  \bigcup\limits_{\scriptstyle i \in I}{f^A_i }^{-1}(R_1)))\cap \bigcup\limits_{\scriptstyle i \in I}{f ^A_ i}^{-1}(R_2)$ is a proper and dense subset of $R_1 \setminus (R_2\cup \bigcup\limits_{\scriptstyle i \in I}{f^ A_i }^{-1}(R_1))$, then $\varphi_1 \cap \varphi_2$ is not a principal congruence  on  $A$.
\end{propo}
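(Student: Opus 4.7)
The plan is to argue by contradiction. Assume that $\varphi_1 \cap \varphi_2$ is a principal congruence on $A$. Then Theorem \ref{T21} yields $\varphi_1 \cap \varphi_2 = \Theta_{OS}(G_1 \cap G_2)$, and Corollary \ref{C23} provides a closed, open and convex subset $R$ of $X(A)$ such that
$$G_1 \cap G_2 = R \cup \bigcup_{i \in I} {f^A_i}^{-1}(R).$$
Write $D = R_1 \setminus (R_2 \cup \bigcup_{i \in I} {f^A_i}^{-1}(R_1))$ and $E = D \cap \bigcup_{i \in I} {f^A_i}^{-1}(R_2)$. The strategy is to exhibit $E$ as the trace of a clopen subset on $D$; this is incompatible with the hypothesis that $E$ is simultaneously a proper and dense subset of $D$.

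The first step is a direct computation of $D \cap (G_1 \cap G_2)$. Any $x \in D$ lies in $R_1 \subseteq G_1$, and by definition of $D$ we have $x \notin R_2$ and $f^A_i(x) \notin R_1$ for every $i \in I$. Consequently $x$ belongs to $G_1 \cap G_2$ if and only if $x \in \bigcup_{i \in I} {f^A_i}^{-1}(R_2)$, so $D \cap (G_1 \cap G_2) = E$.

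The second and more delicate step is to verify that $D \cap {f^A_i}^{-1}(R) = \emptyset$ for every $i \in I$. From $R \subseteq G_1 \cap G_2 \subseteq G_1 = R_1 \cup \bigcup_{j \in I} {f^A_j}^{-1}(R_1)$, each $y \in R$ lies in $R_1$ or satisfies $f^A_j(y) \in R_1$ for some $j \in I$. If $x \in D$ and $f^A_i(x) \in R$, the first alternative gives $x \in {f^A_i}^{-1}(R_1)$, contradicting $x \in D$; the second gives $f^A_j(f^A_i(x)) \in R_1$, which by (lP5) equals $f^A_j(x)$, again contradicting $x \in D$. Combined with the previous step, this forces $E = D \cap R$.

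Since $R$ is clopen in $X(A)$, the set $E = D \cap R$ is both closed and open in $D$ for the subspace topology. A closed dense subset of $D$ must coincide with $D$, contradicting the properness of $E$ and completing the argument. I expect the main obstacle to be precisely the second step above: one has to combine the rather indirect inclusion $R \subseteq G_1$ coming from the assumed principal representation with the absorption identity (lP5) to rule out any contribution of the preimages ${f^A_i}^{-1}(R)$ on $D$; once this reduction is achieved, the clopenness of $R$ makes the clash between density and properness of $E$ immediate.
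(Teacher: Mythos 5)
Your proof is correct and follows essentially the same route as the paper's: assume a principal representation $G_1\cap G_2=R\cup\bigcup_{i\in I}{f^A_i}^{-1}(R)$, use $R\subseteq G_1$ together with (lP5) to show the preimages ${f^A_i}^{-1}(R)$ miss $D$, and then play the density of $E$ in $D$ against its properness. Your version is slightly cleaner at the end — computing $D\cap(G_1\cap G_2)=E=D\cap R$ directly and invoking that a closed dense subset of $D$ equals $D$, where the paper instead partitions $G_1\cap G_2$ into four disjoint pieces and derives the final contradiction pointwise — but the underlying argument is the same.
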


\begin{proof}
By virtue of  Theorem \ref{T21} we have that (1)\, $\varphi_1 \cap \varphi_2 = \Theta_{OS}(G_1 \cap G_2)$. Besides, from the  hypothesis we infer that $G_1 \cap G_2$ is partitioned into mutually disjoint sets as follows (2) $G_1\cap G_2 = (R_1 \cap R_2) \cup (((R_1\setminus R_2)\setminus \bigcup\limits_{\scriptstyle i \in I} {f^A_i}^{-1}(R_1))\\ \cap \bigcup\limits_{\scriptstyle i \in I} {f^A_i}^{-1}(R_2))\cup (((R_2\setminus R_1)\setminus \bigcup\limits_{\scriptstyle i \in I} {f^A_i }^{-1}(R_2))\cap \bigcup\limits_{\scriptstyle i \in I}{f^A_i }^{-1}(R_1))\cup ( \bigcup\limits_{\scriptstyle i \in I} {f^A_i}^{-1}(R_1) \cap \bigcup\limits_{\scriptstyle i \in I} {f^A_i}^{-1}(R_2))$.  Suppose now, that $\varphi_1 \cap \varphi_2$ is a principal congruence on  $A$. Then, from Corollary \ref{C23} there is a closed, open and convex subset $R$ of $X(A)$ such that (3) $G_1\,\cap\, G_2 = R \cup \bigcup\limits_{\scriptstyle i \in I} {f^A_i }^{-1}(R)$. By (2) and (3) we conclude  that $((R_1\setminus R_2)\setminus\bigcup\limits_{\scriptstyle i \in I} {f^A_i}^{-1}(R_1))\cap \bigcup\limits_{\scriptstyle i \in I} {f^A_i}^{-1}(R_2)\subseteq R \cup \bigcup\limits_{\scriptstyle i \in I} 
{f^A_i}^{-1}(R)\,)$ and  $((R_1\setminus R_2)\setminus \bigcup\limits_{\scriptstyle i \in I}{f^A_i}^{-1}(R_1)) \cap \bigcup\limits_{\scriptstyle i \in I} {f^A_i}^{-1}(R_2) \cap \bigcup\limits_{\scriptstyle i \in I}{f^A_i }^{-1}(R)\,)= \emptyset$, from which we get that $((R_1\setminus R_2)\setminus \bigcup\limits_{\scriptstyle i \in I} {f^A_i}^{-1}(R_1) ) \cap \bigcup\limits_{\scriptstyle i \in I}{f^A_i}^{-1}(R_2)\subseteq R$. Therefore, we have that (4) $((R_1\setminus R_2)\setminus \bigcup\limits_{\scriptstyle i \in I }{f^A_i}^{-1}(R_1)) \cap \bigcup\limits_{\scriptstyle i \in I} {f^A_i}^{-1}(R_2)\subseteq R \cap ((R_1\setminus R_2) \setminus \bigcup\limits_{\scriptstyle i \in I} {f^A_i}^{-1}(R_1))$. 
From the hypothesis and the fact that  $R \cap (R_1\setminus R_2)\setminus \bigcup\limits_{\scriptstyle i \in I} {f^A_i }^{-1}(R_1))$ is a closed  subset of $(R_1\setminus R_2)\setminus \bigcup\limits_{\scriptstyle i \in I} {f^A_i }^{-1}(R_1)$
we conclude that $R \cap((R_1\setminus R_2)\setminus \bigcup\limits_{\scriptstyle i \in I} {f^A_i }^{-1}(R_1)) = (R_1\setminus R_2)\setminus \bigcup\limits_{\scriptstyle i \in I} {f^A_i}^{-1}(R_1)$ and hence, (5) $(R_1\setminus R_2) \setminus \bigcup\limits_{\scriptstyle i \in I} {f^A_i }^{-1}(R_1)\subseteq  R$. Furthermore, from the hypothesis there is $x \in X(A)$ such that (6) $ x \in (R_1\setminus R_2)\setminus \bigcup\limits_{\scriptstyle i \in I} {f^A_i}^{-1}(R_1))$ and  (7) $x \not\in \bigcup\limits_{\scriptstyle i \in I} {f^A_i }^{-1}(R_2)$.
Then, (5) and (6) imply that $x \in R$ and so, by (3) it follows that (8) $x \in G_1 \cap G_2$. On the other hand, by (6) we infer  that  $x \not\in R_1\cap R_2$,  $x \not \in \bigcup\limits_{\scriptstyle i \in I} {f^A_i }^{-1}(R_1) \cap \bigcup\limits_{\scriptstyle i \in I} {f^A_i}^{-1}(R_2)$ and $x \not\in ((R_2\setminus R_1)\setminus  \bigcup\limits_{ \scriptstyle i \in I} {{f^A}_i}^{-1}(R_2))\cap \bigcup\limits_{\scriptstyle i \in I} {f^A_i}^{-1}(R_1)$. Besides, by  
 (7) we have that $x \not\in ((R_1\setminus R_2) \setminus \bigcup\limits_{\scriptstyle i \in I} {f^A_i}^{-1}(R_1)) \cap \bigcup\limits_{\scriptstyle i \in I}{f^A_i}^{-1}(R_2)$. These las assertions and (2) allow us to conclude that  $x \not\in  G_1 \cap G_2$, which contradicts (8). Therefore,  $\varphi_1 \cap \varphi_2$ is not a principal congruence on $A$.
\end{proof}

\

Bearing in mind the above results, our next task is to characterize the principal congruences on $n-$valued {\L}ukasiewicz
algebras without negation (or ${\rm LM}_{n}-$algebras) when we consider them as  ${\rm LM}_{\theta}-$algebras in the case that $\theta$ is an integer $n$, $n \geq 2$. It is well-known that each congruence on an ${\rm LM}_{n}-$algebra is a  
$\theta-$congruence on this algebra.  For this aim, first we wil determine the following properties of the ${\rm l}_{n}$P--spaces.

\

\begin{propo}\label{P232}
Let $( X,\{f_1,\ldots,f_{n-1}\})$ be an ${\rm l}_{n}$P--space. Then  condition {\rm (LP6)} is equivalent to any of these conditions:

\begin{itemize}
\item[{\rm(l$_{n}$P6)}] $X = \bigcup\limits_{\scriptstyle i =1}^{n-1}f_i(X)$, 
\item[{\rm(l$_{n}$P7)}] if $Y, Z$ are subsets of $X$ and $f_i^{-1}(Y)= f_i^{-1}(Z)$ for all $i$, $1\leqslant i\leqslant n-1$, then $Y=Z$, 
\item[{\rm (l$_{n}$P8)}] for each  $x \in X$, there is $i_0 $, $1 \leqslant i_0 \leqslant n-1$, such that $x = f_{i_0}(x)$.
\end{itemize}
\end{propo}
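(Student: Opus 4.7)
The plan is to establish the cycle (lP6) $\Rightarrow$ (l$_{n}$P6) $\Rightarrow$ (l$_{n}$P8) $\Rightarrow$ (l$_{n}$P7) $\Rightarrow$ (lP6). The decisive new ingredient, not available in the general $\theta$-setting, is that $I=\{1,\ldots,n-1\}$ is finite, which lets density be upgraded to equality.

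For (lP6) $\Rightarrow$ (l$_{n}$P6), since $X$ is a Priestley space it is compact Hausdorff, so each continuous image $f_i(X)$ is closed; the finite union $\bigcup_{i=1}^{n-1} f_i(X)$ is therefore already closed, and the density hypothesis forces it to equal $X$. For (l$_{n}$P6) $\Rightarrow$ (l$_{n}$P8), given $x \in X$ one picks some $i_0$ and $y \in X$ with $x = f_{i_0}(y)$, and applying (lP5) yields $f_{i_0}(x) = f_{i_0}(f_{i_0}(y)) = f_{i_0}(y) = x$, so one may take that $i_0$.

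For (l$_{n}$P8) $\Rightarrow$ (l$_{n}$P7), suppose $f_i^{-1}(Y) = f_i^{-1}(Z)$ for every $i$ and take $x \in Y$. Choose $i_0$ with $x = f_{i_0}(x)$: then $x = f_{i_0}(x) \in Y$, so $x \in f_{i_0}^{-1}(Y) = f_{i_0}^{-1}(Z)$, which gives $x = f_{i_0}(x) \in Z$; by symmetry $Y = Z$. Finally, (l$_{n}$P7) specialised to increasing clopen subsets $U,V$ is exactly (lP7), and the excerpt has already recorded that (lP7) is equivalent to (lP6), closing the cycle. The only genuinely nontrivial point is the compact-Hausdorff argument at the very first step that turns density into equality; the remaining implications are formal consequences of (lP5).
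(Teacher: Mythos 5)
Your proof is correct and follows essentially the same route as the paper: the paper's written proof consists precisely of your compactness argument for (lP6) $\Leftrightarrow$ (l$_{n}$P6) (continuous images of a compact Hausdorff space are closed, so the finite union is closed and density forces equality), and it dismisses the remaining equivalences as routine. Your cycle (l$_{n}$P6) $\Rightarrow$ (l$_{n}$P8) $\Rightarrow$ (l$_{n}$P7) $\Rightarrow$ (lP7) $\Leftrightarrow$ (lP6), using only (lP5) and the already-recorded equivalence of (lP6) with (lP7), is a valid completion of those routine steps.
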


\begin{proof} 
(lP6)\, $\Leftrightarrow$\, (l$_{n}$P6): By (lP1), $X$ is a Hausdorff and compact space, from which it follows, by  (lP2), that  $f_i: X \fun X$ is a closed function for all  $i$, $1 \leq i \leq n-1$. Therefore,  $\bigcup\limits_{\scriptstyle i =1}^{n-1}f_i(X)$  is a closed subset of $X$ and so,  $\overline{\bigcup\limits_{\scriptstyle i =1}^{n-1}f_i(X)} = \bigcup\limits_{\scriptstyle i =1}^{n-1}f_i(X)$.

\vspace{2mm}

To prove the other equivalences is routine.
\end{proof}
 
\begin{cor}\label{PN31}{\rm(\cite[Proposition 3.1]{FPZ1})} 
Let $(X,\{f_1,\ldots, f_{n-1}\})$ be an ${\rm l}_{n}$P--space. Then $X$ is the cardinal sum of a family of chains, each of which has at most $n-1$ elements.
\end{cor}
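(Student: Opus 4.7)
The plan is to derive the decomposition from property (lP11) and then use Proposition \ref{P232} to show that each segment appearing in that decomposition is actually a finite chain parametrised by the functions $f_i$.

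Since $\theta = n$, the index set $I = \{1,\dots,n-1\}$ has a least element $1$ and a greatest element $n-1$, so (lP11) applies and tells us that $X$ is the cardinal sum of the segments $S_x := [f_1(x), f_{n-1}(x)]$, one for each $x \in X$. Because different summands of a cardinal sum are pairwise incomparable, it is enough to show that every such $S_x$ is a chain with at most $n-1$ elements. I would prove this by establishing the identity
$$
S_x \;=\; \{f_1(x), f_2(x), \dots, f_{n-1}(x)\}.
$$

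The inclusion $\supseteq$ is immediate from (lP4), which gives $f_1(x) \leqslant f_i(x) \leqslant f_{n-1}(x)$ for all $i$. For the inclusion $\subseteq$, the key move is to combine the three ingredients (lP3), (lP5), and the characterisation (l$_n$P8) coming from Proposition \ref{P232}. Namely, given $y \in S_x$, by (l$_n$P8) there is some $i_0$ with $y = f_{i_0}(y)$. Since $f_1(x) \leqslant y$, property (lP3) (which is the surprisingly strong statement that $f_{i_0}$ is \emph{constant} on every comparable pair) yields $f_{i_0}(y) = f_{i_0}(f_1(x))$, and then (lP5) collapses this to $f_{i_0}(x)$. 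Thus $y = f_{i_0}(x)$, as required. Once the identity above is in hand, (lP4) shows that $\{f_1(x),\dots,f_{n-1}(x)\}$ is linearly ordered, hence a chain of cardinality at most $n-1$.

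The only subtle point is noticing that (lP3) in this setting is an equality rather than just monotonicity, so that sliding an element down to $f_1(x)$ (or up to $f_{n-1}(x)$) does not change its image under any $f_{i_0}$; without this, the argument that every element of the segment is of the form $f_{i_0}(x)$ would fail. Everything else is a straightforward bookkeeping of the finite case of the general duality already recorded in the preliminaries.
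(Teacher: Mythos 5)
Your argument is correct and follows essentially the same route as the paper: both start from the decomposition of $X$ given by (lP11) and then use (lP3), (lP5) and (l$_n$P8) to identify each summand with the set $\{f_1(x),\dots,f_{n-1}(x)\}$, which is a chain of at most $n-1$ elements by (lP4). The paper states this identification in one line; you have merely written out the inference it invokes.
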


\begin{proof} 
By  (lP11),  $X$ is the cardinal sum of the sets $[\{f_i(x): 1 \leq i \leq n-1\}) \cup  (\{f_i(x): 1 \leq i \leq n-1 \}]$,  $x \in X$. From (lP3), (lP4), (lP5) and  (l$_n$P8)  we infer that for each  $x \in X$, the set $[\{f_i(x): \, 1 \leq i \leq n-1\}) \cup  (\{f_i(x): 1 \leq i \leq n-1 \}] = \{f_i(x): 1 \leq i \leq n-1\}$ is a maximal chain in $X$ and so, the proof is complete. 
 \end{proof}

\

Now we are going to introduce the notion of modal subset of an ${\rm l}_{\theta}$P--space. These subsets play a fundamental role in the characterization of the con\-gruences on ${\rm Lk}_{n}-$algebras, as we will show next.

\begin{defi}\label{D321} Let $( X,\{f_i)\}_{i \in I})$ be an  ${\rm l}_{\theta}$P--space. A subset $Y$ of $X$ is modal if  $Y = {f_i}^{-1}(Y)$ for all $i\in I$.
\end{defi}
 
\vspace{2mm}

In order to reach our goal we will show the following lemmas.
   
\vspace{2mm}

\begin{lem}\label{PN34} 
Let $( X,\{f_1,\ldots,f_{n-1}\})$ be an ${\rm l}_{n}$P--space and let $Y$ be a non empty set of $X$. Then the following  conditions are equivalent: 

\begin{itemize}
\item[{\rm (i)}] $Y$ is semimodal,
\item[{\rm (ii)}] $Y$ is modal, 
\item[{\rm (iii)}] $Y$ is a cardinal sum of maximal chains in $X$.
\end{itemize}
\end{lem}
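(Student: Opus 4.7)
The plan is to treat the three conditions via the cycle (ii)~$\Rightarrow$~(i)~$\Rightarrow$~(iii)~$\Rightarrow$~(ii), relying on the structural description given by Corollary \ref{PN31}: $X$ decomposes as a cardinal sum of maximal chains, and the maximal chain through any $x\in X$ is precisely the finite set $C(x)=\{f_i(x):1\leqslant i\leqslant n-1\}$. Condition (l$_n$P8) will be used to secure $x\in C(x)$, and (lP5) to see that $f_i(C(x))\subseteq C(x)$.

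The implication (ii)~$\Rightarrow$~(i) is essentially immediate: if $Y={f_i}^{-1}(Y)$ for every $i$, then $y\in Y$ forces $f_i(y)\in Y$, hence $f_i(Y)\subseteq Y$ and therefore $\bigcup_{i=1}^{n-1}f_i(Y)\subseteq Y$. The first substantive step is (i)~$\Rightarrow$~(iii). Given $y\in Y$ with $Y$ semimodal, by (lP5) each element of $C(y)=\{f_1(y),\dots,f_{n-1}(y)\}$ lies in $\bigcup_{i=1}^{n-1}f_i(Y)\subseteq Y$. By (l$_n$P8) we also have $y\in C(y)$, and by Corollary \ref{PN31} $C(y)$ is exactly the maximal chain of $X$ containing $y$. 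Hence every maximal chain that meets $Y$ is entirely contained in $Y$, so $Y$ is a union, and therefore (since $X$ is the cardinal sum of its maximal chains) a cardinal sum of maximal chains of $X$.

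For (iii)~$\Rightarrow$~(ii), write $Y=\bigsqcup_{\lambda}C_\lambda$ with each $C_\lambda$ a maximal chain of $X$. Fix $i\in\{1,\dots,n-1\}$. To see $Y\subseteq{f_i}^{-1}(Y)$, take $y\in Y$; then $y\in C_\lambda$ for some $\lambda$, and $f_i(y)\in C(y)=C_\lambda\subseteq Y$ by the identification above. For the reverse inclusion ${f_i}^{-1}(Y)\subseteq Y$, suppose $f_i(x)\in Y$, so $f_i(x)\in C_\lambda$ for some $\lambda$. Since $f_i(x)\in C(x)$ and $C(x)$ is the (unique) maximal chain of $X$ containing $f_i(x)$, the cardinal sum decomposition forces $C(x)=C_\lambda$; by (l$_n$P8), $x\in C(x)=C_\lambda\subseteq Y$.

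I expect the only subtle point to be the last step: one has to make sure that $f_i(x)\in Y$ really places $x$ in the \emph{same} maximal chain as $f_i(x)$, which is where the cardinal sum structure and Corollary \ref{PN31} do the essential work. Once that identification is made, the three equivalences fall out uniformly.
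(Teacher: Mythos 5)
Your proof is correct and uses essentially the same ingredients as the paper's: the identification of the maximal chain through $x$ with $\{f_i(x):1\leqslant i\leqslant n-1\}$ via Corollary \ref{PN31}, condition (l$_n$P8) to place $x$ in its own chain, and (lP5) to stabilize chains under the $f_i$. The only difference is the orientation of the cycle of implications (the paper proves (i)~$\Rightarrow$~(ii)~$\Rightarrow$~(iii)~$\Rightarrow$~(i) directly, whereas you route semimodal~$\Rightarrow$~modal through the chain decomposition), which is immaterial.
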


\begin{proof}
(i)\ $\Rightarrow$\ (ii):  Suppose that $f_{i}(z)\in Y$. By  (l$_n$P8) we have that $f_{i_0}(z) = z$ for some  $i_0$, $1\leq i_0 \leq  n-1$. Then, from the hypothesis and (lP5) we infer that $z=f_{i_0}(f_{i}(z))\in Y$.  Therefore, ${f_i}^{-1}(Y) \subseteq Y$ for all $i$, $1\leq i\leq n-1$. The other inclusion follows immediately. 

\vspace{2mm}

(ii)\ $\Rightarrow$\ (iii):  For each $y \in Y$, let $C_y= \{f_i(y): 1\leq i \leq n\}$. Then, taking into account the proof of Corollary  \ref{PN31} and (l$_n$P8) it follows that $C_y$ is a maximal chain in $X$ to which $y$ belongs. Therefore,  $Y \subseteq \bigcup\limits_{\scriptstyle y \in Y} C_y $. On the other hand, $\bigcup\limits_{\scriptstyle y \in Y} C_y = \bigcup\limits_{i =1}^{n-1} f_i(Y)$ and since $Y$ is modal, we conclude that $\bigcup\limits_{\scriptstyle y \in Y} C_y \subseteq Y$, which completes the proof.

\vspace{2mm}

(iii)\ $\Rightarrow$\ (i): It  follows from  (lP5) and the fact that a subset $C$ of $X$ is a chain if and only if $C = \{f_i(x): 1\leq i \leq  n-1\}$, for some $x \in X$.
\end{proof}

\begin{lem}\label{C312}  
Let $(X,\{f_1,\ldots,f_{n-1}\})$ be an ${\rm l}_{n}$P--space and let $Y$ be a modal subset of $X$. Then $X\setminus Y$ is  modal.
\end{lem}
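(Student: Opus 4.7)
The plan is to derive modality of $X \setminus Y$ directly from Definition \ref{D321}, using only the fact that each $f_i$ is a function between sets. My starting observation is the elementary identity that preimages commute with complementation: for any function $g \colon X \to X$ and any $Z \subseteq X$, one has $g^{-1}(X \setminus Z) = X \setminus g^{-1}(Z)$. This requires no topology, no continuity, and no appeal to the specific $l_{n}$P-axioms.

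Applying this identity with $g = f_i$ and $Z = Y$, and using the hypothesis $f_i^{-1}(Y) = Y$, I obtain
$$f_i^{-1}(X \setminus Y) \;=\; X \setminus f_i^{-1}(Y) \;=\; X \setminus Y$$
for every $i$, $1 \leq i \leq n-1$. This is precisely the condition required by Definition \ref{D321} for $X \setminus Y$ to be modal, so the argument collapses to essentially one line. In particular, Lemma \ref{PN34}, condition (l$_n$P8), and the chain-decomposition from Corollary \ref{PN31} are not needed here, although they will presumably be invoked in the subsequent lemmas that use this one.

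There is no serious obstacle. The only point worth flagging is the contrast with \emph{semimodal} sets, for which the hypothesis $\bigcup_{i} f_i(Y) \subseteq Y$ is a forward-image inclusion; such a condition does not pass to complements in any automatic way (an analogous statement for complements of semimodal sets would genuinely need Lemma \ref{PN34} or a similar structural input). The strength of the modal condition, namely equality of $Y$ with each preimage $f_i^{-1}(Y)$ rather than a containment of forward images, is precisely what makes the passage to $X \setminus Y$ immediate and purely set-theoretic.
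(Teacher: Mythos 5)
Your proof is correct and is exactly the argument the paper intends: the paper's own proof simply states that the lemma is a direct consequence of Definition \ref{D321}, and your computation $f_i^{-1}(X \setminus Y) = X \setminus f_i^{-1}(Y) = X \setminus Y$ is precisely the spelled-out version of that one-line claim. Your side remark correctly identifies why the analogous statement for semimodal sets is not automatic and does require the structural input of Lemma \ref{PN34}.
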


\begin{proof} 
It is a direct consequence of  Definition \ref{D321}.
\end{proof}

\begin{theo}\label{T21b} 
Let $A$ be an LM$_n-$algebra and let ${\L}(A)$ be the l$_{n}$P--space associated with $A$. Then, it holds:

\begin{itemize}
\item[{\rm(ii)}] the lattice ${\cal C}_{M}(X(A))$ of all closed and modal subsets of $X(A)$ is isomorphic to the dual lattice $Con_{Lk_{n}}(A)$ of all {\rm Lk}$_{n}$-congruences on $A$ and the isomorphism is the function $\Theta_{M}: {\cal O}_{M}(X(A)) \fun Con_{Lk_{n}}(A)$, defined as in {\rm (A4)}.

\item[{\rm(ii)}] The lattice ${\cal O}_{M}(X(A))$ of all open and modal subsets of $X(A)$ is isomorphic to the lattice $Con_{Lk_{n}}(A)$ of all {\rm Lk}$_{n}$-congruences on $A$ and the isomorphism is the function $\Theta_{OM}: {\cal O}_{M}(X(A)) \fun Con_{Lk_{n}}(A)$, defined as in Theorem \ref{T21}.
\end{itemize}
\end{theo}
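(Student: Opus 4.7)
The plan is to reduce both parts to the duality results already proved in the paper, by showing that in the $n$-valued setting the families of subsets under consideration coincide with those used in (A4) and Theorem \ref{T21}(i).

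First I would establish the lattice equality ${\cal C}_M(X(A)) = {\cal C}_S(X(A))$. One inclusion is immediate from the definitions: if $Y = f_i^{-1}(Y)$ for every $i \in I$, then certainly $\bigcup_{i} f_i(Y)\subseteq Y$, so every modal subset is semimodal. The reverse inclusion is precisely Lemma \ref{PN34}, modulo the trivial empty-set case (Lemma \ref{PN34} is phrased for non-empty $Y$, but $\emptyset$ is modal vacuously). Since both families inherit the same meets and joins from subsets of $X(A)$, this is a lattice equality, a fortiori a lattice isomorphism. Part (i) then follows at once from (A4): the formula proposed for $\Theta_M$ is exactly the one given there for $\Theta_S$, and the target $Con_{Lk_n}(A)$ coincides with $Con_{LM_n}(A)$.

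For part (ii), the parallel step is to show ${\cal O}_M(X(A)) = {\cal O}_{CS}(X(A))$. If $G$ is open and modal, then $X(A)\setminus G$ is closed and, by Lemma \ref{C312}, modal, hence in particular semimodal, so $G\in {\cal O}_{CS}(X(A))$. Conversely, if $G \in {\cal O}_{CS}(X(A))$, the complement $X(A)\setminus G$ is closed and semimodal; when non-empty it is upgraded to modal by Lemma \ref{PN34}, and Lemma \ref{C312} returns modality to $G$ itself, while $G = X(A)$ is trivial. Theorem \ref{T21}(i) then delivers the isomorphism of ${\cal O}_M(X(A))$ with $Con_{Lk_n}(A)$, with $\Theta_{OM}$ identified with its $\Theta_{OS}$.

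No serious obstacle is expected: the theorem is essentially a corollary of Lemma \ref{PN34}, Lemma \ref{C312}, (A4), and Theorem \ref{T21}(i). The only points requiring care are (a) treating the empty-set case by hand, since Lemma \ref{PN34} is stated for non-empty subsets, and (b) invoking the remark made just before the theorem that every $LM_n$-congruence is automatically a $\theta$-congruence, which is what allows the single pair of statements (i)--(ii) to cover both flavours of congruence simultaneously in the finite-valued setting.
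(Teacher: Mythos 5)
Your proposal is correct and follows essentially the same route as the paper, whose proof of part (i) cites (A4) together with Lemma \ref{PN34} and of part (ii) cites Theorem \ref{T21} together with Lemmas \ref{PN34} and \ref{C312}; you merely spell out the identifications ${\cal C}_M(X(A))={\cal C}_S(X(A))$ and ${\cal O}_M(X(A))={\cal O}_{CS}(X(A))$ that the paper leaves implicit. Your explicit handling of the empty set (excluded from the hypotheses of Lemma \ref{PN34}) is a small but welcome addition of care.
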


\begin{proof}
(i):\, It follows from (A4) and Lemma \ref{PN34}.

\vspace{2mm}

(ii):\, It is a consequence of Theorem \ref{T21} and  Lemmas \ref{PN34} and  \ref{C312}. 
\end{proof}

\

In the sequel, we take into account the well-known fact that Priestley duality provides an isomorphism between the lattices
${\cal F}(L)$ of all filters of a bounded distributive  lattice $L$ and that of ${\cal C}_I(X(L))$ of all closed and increasing subsets of $X(L)$. Under this isomorphism, any $F \in {\cal F}(L)$ corresponds to the increasing closed 
subset $Y_F = \bigcap \{\sigma_L(a): a \in F \}$, and any $Y \in {\cal C}_I(X(L))$ corresponds to the filter $F_Y = \{a \in L: Y \subseteq \sigma_L(a)\}$,  and   ${\bf\Theta}(F)=  \Theta(Y_F)$ and $\Theta(Y)= {\bf\Theta} (F_Y)$, where  $\Theta(Y)$ is  defined as in (A4) for all $Y \in {\cal C}_I(X(L))$ and ${\bf \Theta}(F)$ is the congruence associated with $F$.

\vspace{2mm}

\begin{propo}\label{P22b}
Let  $A$ be an ${\rm LM}_{n}-$algebra and let ${\L}_{n}(A)$ be the ${\rm l}_{n}$P--space associated with $A$. Then the following conditions are equivalent for all $a,b \in A$, $a \leqslant b$:  

\begin{itemize}
\item[\rm{(i)}] $\Theta_{OM}(G)= \Theta(a,b)$ and $G \in {\cal O}_{M}(X(A))$,
\item[\rm{(ii)}] $G$ is the least element of ${\cal O}_{M}(X(A))$,  ordered by inclusion,  which contains $\sigma_A(b) \setminus \sigma_A(a)$,
\item[\rm{(iii)}] $G =\bigcup\limits_{i =1}^{n-1} {f_i^A }^{-1}(\sigma_A(b)\setminus \sigma_A(a))$,
\item[\rm{(iv)}] $G =\sigma_A (\bigvee\limits_{j =1}^{n-1}(\phi_i b \wedge \overline{\phi}_i a))$,
\item[\rm{(v)}] $\Theta_{OM}(G) = {\bf \Theta}([\bigwedge\limits_{i =1}^{n-1}(\overline{\phi}_{i}b \vee \phi_{i}a )))$,
\end{itemize}
\end{propo}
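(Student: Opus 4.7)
The plan is to establish the cycle (i)$\Leftrightarrow$(ii)$\Leftrightarrow$(iii)$\Leftrightarrow$(iv)$\Leftrightarrow$(v), adapting the strategy of Proposition \ref{C21} to the $n$-valued setting. Theorem \ref{T21} and ${\cal O}_{CS}(X(A))$ are replaced throughout by Theorem \ref{T21b}(ii) and ${\cal O}_{M}(X(A))$. The key simplification afforded by the $n$-valued hypothesis is that the summand $\sigma_A(b)\setminus\sigma_A(a)$ appearing in Proposition \ref{C21}(iii) may be dropped: by (l$_n$P8), every $x\in X(A)$ satisfies $x=f_{i_0}^A(x)$ for some $i_0$, so that $\sigma_A(b)\setminus\sigma_A(a)\subseteq\bigcup_{i=1}^{n-1}{f_i^A}^{-1}(\sigma_A(b)\setminus\sigma_A(a))$ automatically.

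The equivalence (i)$\Leftrightarrow$(ii) is obtained by repeating the argument of Proposition \ref{C21}, with Theorem \ref{T21b}(ii) in place of Theorem \ref{T21}. For (ii)$\Leftrightarrow$(iii) I set $G_0=\bigcup_{i=1}^{n-1}{f_i^A}^{-1}(\sigma_A(b)\setminus\sigma_A(a))$ and verify $G_0\in{\cal O}_{M}(X(A))$. Openness comes from (lP2); modality, i.e.\ ${f_j^A}^{-1}(G_0)=G_0$ for every $j$, comes from (lP5), since $f_i\circ f_j=f_i$ gives ${f_j^A}^{-1}({f_i^A}^{-1}(U))={f_i^A}^{-1}(U)$. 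The containment $\sigma_A(b)\setminus\sigma_A(a)\subseteq G_0$ uses the remark above, and for minimality, any modal $H\in{\cal O}_{M}(X(A))$ containing $\sigma_A(b)\setminus\sigma_A(a)$ must also contain each ${f_i^A}^{-1}(\sigma_A(b)\setminus\sigma_A(a))\subseteq{f_i^A}^{-1}(H)=H$, whence $G_0\subseteq H$.

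The step (iii)$\Leftrightarrow$(iv) reduces to the identity $\sigma_A(\phi_ib\wedge\overline{\phi}_ia)={f_i^A}^{-1}(\sigma_A(b)\setminus\sigma_A(a))$, a direct consequence of (A1); taking unions over $i$ then gives the equivalence. For (iv)$\Leftrightarrow$(v), let $c=\bigwedge_{i=1}^{n-1}(\overline{\phi}_ib\vee\phi_ia)$; a dual De Morgan computation through (A1) yields $\sigma_A(c)=X(A)\setminus G$. The Priestley correspondence between filters and closed increasing subsets then gives $Y_{[c)}=\sigma_A(c)$ for the principal filter $[c)$, so $\mathbf{\Theta}([c))=\Theta(\sigma_A(c))$. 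Comparing the two formulas, $\sigma_A(x)\triangle\sigma_A(y)\subseteq G$ is equivalent to $\sigma_A(x)\cap\sigma_A(c)=\sigma_A(y)\cap\sigma_A(c)$, which closes the cycle with $\Theta_{OM}(G)=\mathbf{\Theta}([c))$. I expect the main obstacle to be purely notational: the substantive ingredients are the absorption supplied by (l$_n$P8) and the use of (lP5) to upgrade semimodality to modality, after which the remainder is straightforward bookkeeping through the duality.
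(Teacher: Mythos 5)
Your proposal is correct and follows essentially the same route as the paper's proof: the same use of (l$_n$P8) to absorb $\sigma_A(b)\setminus\sigma_A(a)$ into the union, (lP2) and (lP5) for openness and modality, the $\sigma_A$-computation for (iii)$\Leftrightarrow$(iv), and the complement/filter correspondence $X(A)\setminus G=\sigma_A\bigl(\bigwedge_{i=1}^{n-1}(\overline{\phi}_ib\vee\phi_ia)\bigr)$ for (iv)$\Leftrightarrow$(v). The only difference is cosmetic: you handle (iv)$\Leftrightarrow$(v) in one symmetric step via the symmetric-difference formula, where the paper argues the two directions separately.
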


\begin{proof}  
(i)\, $\Leftrightarrow$\, (ii): It follows from Theorem \ref{T21b} using the same argument as in Proposition \ref{C21}.  

\vspace{2mm}

(ii)\, $\Leftrightarrow$\, (iii): 
By (l$_n$lP8) we infer that $\sigma_A(b)\setminus \sigma_A(a)\subseteq \bigcup\limits_{i =1}^{n-1} {f_i^A }^{-1}( \sigma_A(b) \setminus \sigma_A(a))$. Furthermore, by (lP2) and (lP5) we have that $\bigcup\limits_{i =1}^{n-1} {f_i^A }^{-1}(\sigma_A(b)\setminus \sigma_A(a))\in {\cal O}_{M}(X(A))$. On the other hand, if $H \in{\cal O}_{M}(X(A))$ and $\sigma_A(b) \setminus \sigma_A(a) \subseteq H$, since $H$ is modal, we conclude that $\bigcup\limits_{i =1} ^{n-1} {f_i^A }^{-1}(\sigma_A(b) \setminus \sigma_A(a) \subseteq H$. Therefore,  
$G = \bigcup\limits_{i =1}^{n-1} {f_i^A }^{-1}( \sigma_A(b) \setminus \sigma_A(a)$ if and only if $G$ verifies (ii).

\vspace{2mm}

(iii)\, $\Leftrightarrow$\, (iv):  Taking into account that $\sigma_A$ is an ${\rm LM}_{n}-$isomorphism we have that    
$\bigcup\limits_{i =1}^{n-1} {f_i^A }^{-1}(\sigma_A(b) \setminus \sigma_A(a)) = \bigcup\limits_{i =1}^{n-1}({f_i^A }^{-1}(\sigma_A(b)) \cap (X(A) \setminus {f_i^A }^{-1}(\sigma_A(a))) = \bigcup\limits_{i =1}^{n-1} (\phi_i^{X(A)} \sigma_A(b)   \cap {\overline{\phi}}_i^{X(A)} \sigma_A(a))= \bigcup\limits_{i =1}^{n-1} (\sigma_A(\phi_i b) \cap \sigma_A(\overline{\phi}_i a)) = \bigcup\limits_{i =1}^{n-1} \sigma_A(\phi_i b \wedge \overline{\phi}_i a) = \sigma_A(\bigvee\limits_{j =1} ^{n-1} (\phi_i b \wedge \overline{\phi}_i a)$, from which the proof is complete. 

\vspace{2mm}

(iv)\, $\Rightarrow$\, (v): Bearing in mind that $\Theta_{OM} (G) = \Theta_{M}(X(A)\setminus G)$ and that $X (A)\setminus G = \sigma_A (\bigwedge\limits_{i =1}^{n-1}(\overline{\phi}_{i}b \vee \phi_{i}a ))$, by Theorem \ref{T21b} we infer that $\Theta_{OM}(G)= {\bf\Theta}([\bigwedge\limits_{i =1}^{n-1}(\overline{\phi}_{i}b \vee \phi_{i}a)))$.

\vspace{2mm}

(v)\, $\Rightarrow$\, (iv): Since $\sigma_A (\bigwedge\limits_{i =1}^{n-1} (\overline{\phi}_i b \vee \phi_i a))=\bigcap\{\sigma_A(x): x\in [\bigwedge\limits_{i =1}^{n-1} (\overline{\phi}_i b \vee \phi_i a))\}$,  by
 Theorem  \ref{T21b} we have that ${\bf\Theta}([\bigwedge\limits_{i =1}^{n-1}(\overline{\phi}_{i}b \vee \phi_{i}a)))= \Theta_{M}(\sigma_A(\bigwedge\limits_{i =1}^{n-1}(\overline{\phi}_{i}b \vee \phi_{i}a )))$. Besides, by  Theorem \ref{T21b} we infer that  
$\Theta_{M}(\sigma_A(\bigwedge\limits_{i =1}^{n-1}(\overline{\phi}_{i}b \vee \phi_{i}a ))) = \Theta_{O M}(\sigma_A ( \bigvee\limits_{j =1}^{n-1} (\phi_i b \wedge \overline{\phi}_i a)))$. Hence, $\Theta_{O M}(\sigma_A (\bigvee\limits_{j =1} ^{n-1} (\phi_i b \wedge \overline{\phi}_i a)))= {\bf\Theta}([\bigwedge\limits_{i =1}^{n-1}(\overline{\phi}_{i}b \vee \phi_{i}a)))$. From this last equality and the hypothesis we conclude that $\Theta_{OM}(G)= \Theta_{O M}(\sigma_A ( \bigvee\limits_{j =1}^{n-1} (\phi_i b  \wedge \overline{\phi}_i a)))$ and so, by Theorem \ref{T21b} we get $G = \sigma_A (\bigvee\limits_{j =1}^{n-1} (\phi_i b \wedge \overline{\phi}_i a))$.
\end{proof}

\begin{propo}\label{P312} 
Let $A$ be an LM$_n-$algebra and let ${\L}(A)$ be the l$_{n}$P--space associated with $A$. Then the following conditions are equivalent:

\begin{itemize}
\item[\rm{(i)}] $\vartheta$  is a principal ${\rm Lk}_{n}-$congruence,  
\item[ \rm{(ii)}] $\vartheta =\Theta_{OM}(G)$, where $G =\bigcup\limits_{i =1}^{n-1}{f_i^A }^{-1}(R)$ and  $R$ is a closed, open and convex subset of $X(A)$,   
\item[ \rm{(iii)}] $\vartheta =\Theta_{OM}(G)$, where $G$ is a closed, open and modal subset of  $X(A)$.
\end{itemize}
\end{propo}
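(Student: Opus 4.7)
My plan is to prove the cyclic chain (i) $\Rightarrow$ (ii) $\Rightarrow$ (iii) $\Rightarrow$ (i), making essential use of Proposition \ref{P22b} together with the structural description of $l_n$P-spaces provided by Corollary \ref{PN31} and Lemma \ref{PN34}.

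The two forward implications are essentially mechanical. For (i) $\Rightarrow$ (ii), I would first reduce to generators $a\leq b$ (since $(a,b)$ and $(a\wedge b,a\vee b)$ generate the same congruence) and then directly invoke Proposition \ref{P22b}(iii) to write $\vartheta=\Theta_{OM}(G)$ with $G=\bigcup_{i=1}^{n-1}{f_i^A}^{-1}(R)$ for $R=\sigma_A(b)\setminus\sigma_A(a)$; property (A7) certifies that this $R$ is closed, open and convex, since $\sigma_A(a),\sigma_A(b)\in D(X(A))$ and $\sigma_A(a)\subseteq\sigma_A(b)$. For (ii) $\Rightarrow$ (iii), openness and closedness of $G$ come for free, since $I=\{1,\ldots,n-1\}$ is finite and each $f_i^A$ is continuous by (lP2). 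For modality I would use (lP5): because $f_i^A\circ f_j^A=f_i^A$, the preimage ${f_j^A}^{-1}({f_i^A}^{-1}(R))$ collapses to ${f_i^A}^{-1}(R)$, so taking unions over $i$ yields ${f_j^A}^{-1}(G)=G$ for every $j\in I$.

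The crux is (iii) $\Rightarrow$ (i), and the key observation I would establish is that a clopen modal set is automatically both increasing and decreasing. By Lemma \ref{PN34}, $G$ is a cardinal sum of maximal chains of $X(A)$, and by Corollary \ref{PN31} those maximal chains are exactly the components of the cardinal-sum decomposition of $X(A)$; so whenever $x\in G$ is comparable to some $y\in X(A)$, the two sit in a common component, forcing $y\in G$. Hence $G\in D(X(A))$, and by Priestley duality $G=\sigma_A(e)$ for some $e\in A$. Taking $a=0$ and $b=e$, modality gives $\bigcup_{i=1}^{n-1}{f_i^A}^{-1}(G)=G=\sigma_A(b)\setminus\sigma_A(a)$, so Proposition \ref{P22b}(iii) identifies $\Theta_{OM}(G)$ with the principal congruence $\Theta(0,e)$. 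My anticipated main obstacle is precisely this reduction from \emph{clopen modal} to \emph{clopen increasing}: it relies crucially on the finite-chain decomposition of $l_n$P-spaces granted by Corollary \ref{PN31}, and has no counterpart for general $l_\theta$P-spaces, which is why the statement is restricted to the $n$-valued case.
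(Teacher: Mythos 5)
Your proof is correct and follows essentially the same route as the paper: both hinge on Proposition \ref{P22b}, (A7) for the equivalence of (i) and (ii), and (lP2), (lP5) and Lemma \ref{PN34} for the equivalence with (iii). The only difference is in the closing implication, where the paper shows the clopen modal set $G$ is convex (via Lemma \ref{PN34}) and takes $R=G$ to return to (ii), whereas you show $G$ is increasing and decreasing, obtain $G=\sigma_A(e)$, and exhibit the explicit generating pair $(0,e)$; both variants are sound.
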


\begin{proof}

(i)\, $\Leftrightarrow$\,(ii): It is an immediate consequence of  Proposition \ref{P22b} and (A7).

\vspace {2mm}

(ii)\, $\Rightarrow$\,(iii): From the hypothesis, (lP2) and  (lP5) we have that $G$ is closed, open and modal subset of $X(A)$.

\vspace {2mm}

(iii)\, $\Rightarrow$\,(ii):  From Proposition \ref{PN34}, we have that $G$  is the cardinal sum of maximal chains and so, $G$ is a convex subset of $X(A)$. Hence, from the hypothesis by taking $R=G$ we conclude the proof. 
\end{proof}

\begin{cor}\label{C314}
Let $A$ be an Lk$_n-$algebra. Then, the intersection of two principal ${\rm Lk}_{n}-$congruence on $A$ is a  principal one. 
\end{cor}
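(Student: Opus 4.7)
The plan is to combine Proposition \ref{P312} with the lattice isomorphism of Theorem \ref{T21b}(ii). By Proposition \ref{P312}, each principal ${\rm Lk}_n$-congruence $\vartheta_j$ ($j=1,2$) on $A$ can be written as $\vartheta_j = \Theta_{OM}(G_j)$ for some closed, open and modal subset $G_j$ of $X(A)$. Since Theorem \ref{T21b}(ii) asserts that $\Theta_{OM}: {\cal O}_M(X(A)) \fun Con_{Lk_n}(A)$ is a lattice isomorphism, we have $\vartheta_1 \cap \vartheta_2 = \Theta_{OM}(G_1 \cap G_2)$.

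Therefore it will suffice to verify that $G_1 \cap G_2$ lies in ${\cal O}_M(X(A))$ and, moreover, that it is a closed, open and modal subset of $X(A)$, in order to apply Proposition \ref{P312} in the converse direction and conclude that $\vartheta_1 \cap \vartheta_2$ is principal. Closedness and openness are immediate, since both properties are preserved under finite intersections. For modality, the key observation is that $(f_i^A)^{-1}$ commutes with intersections, so
\[
(f_i^A)^{-1}(G_1 \cap G_2) = (f_i^A)^{-1}(G_1) \cap (f_i^A)^{-1}(G_2) = G_1 \cap G_2
\]
for every $i$, $1 \leq i \leq n-1$, using that $G_1$ and $G_2$ are modal.

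There is essentially no obstacle here: the argument rests entirely on the fact that the class of closed, open and modal subsets of an ${\rm l}_n$P--space is stable under finite intersections, which is precisely the structural feature that fails in the general $\theta$-congruence setting and which forced the more elaborate calculation carried out in Corollary \ref{C36}. In the $n$-valued case, the equivalence modal $\Leftrightarrow$ semimodal provided by Lemma \ref{PN34}, together with the characterization in Proposition \ref{P312}(iii), makes the description of principal congruences closed under intersection, and the corollary follows directly.
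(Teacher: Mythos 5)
Your proof is correct and is exactly the argument the paper intends: the paper's own proof is the one-line citation of Proposition \ref{P312} and Theorem \ref{T21b}, and you have simply spelled out the details (representation via closed, open, modal sets, the lattice isomorphism turning intersection of congruences into intersection of sets, and stability of closed-open-modal sets under finite intersection).
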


\begin{proof} 
It is a direct consequence of Proposition \ref{P312} and Theorem \ref{T21b}.
\end{proof}

\section{\textbf \large Boolean congruences}\label{s04}\indent %(4)

Next, our attention is focus on determine the Boolean congruences and ${\theta}-$con\-gru\-ences on ${\rm LM}_{\theta}-$algebras bearing in mind the topological duality for them established in Section \ref{s01}. In order to do this, we will start studying certain subsets of ${\rm l}_{\theta}$P--spaces which will be fundamental   
to reach our goal.

\begin{propo}\label{L321} 
Let  $A$ be  an ${\rm LM}_{\theta}-$algebra and let  ${\L}_{\theta}(A)$ be the  ${\rm l}_{\theta}$P--space  associated with $A$. Then for each $Y \subseteq X(A)$ holds:

\begin{itemize}
\item[{\rm(i)}] $\Theta_{OS}(Y)$ is a Boolean congruence on $A$ if and only if $Y$ is a closed and open subset of $X(A)$ such that $Y$ and $X (A)\setminus Y$ are semimodal, where  $\Theta_{OS}(Y)$ is defined  as in Theorem \rm{\ref{T21}}.

\item[{\rm(ii)}] $\Theta_{O\theta}(Y)$ is a Boolean $\theta-$congruence on $A$ if and only if $Y$ is a closed and open subset of $X(A)$ such that $Y$ and $X (A)\setminus Y$ are $\theta-$subsets of $X(A)$, where $\Theta_{O\theta}(Y)$ is defined as in Theorem \rm{\ref{T21}}.
\end{itemize}
\end{propo}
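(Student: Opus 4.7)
The plan is to derive both parts from the lattice isomorphism of Theorem~\ref{T21}, reinterpreting a Boolean (i.e., complemented) congruence as a complemented element of ${\cal O}_{CS}(X(A))$ (resp.\ ${\cal O}_{C\theta}(X(A))$). In each case, the goal is to show that a complement of $Y$ in the lattice must be the set-theoretic complement $Z=X(A)\setminus Y$, which then forces $Y$ to be clopen and both $Y$ and $X(A)\setminus Y$ to lie in the relevant class.

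For the ``if'' direction in each case, suppose $Y$ is clopen with both parts semimodal (resp.\ $\theta$-subsets). Then $Z:=X(A)\setminus Y$ lies in ${\cal O}_{CS}(X(A))$ (resp.\ ${\cal O}_{C\theta}(X(A))$); together with $Y\cap Z=\emptyset$ and $Y\cup Z=X(A)$, Theorem~\ref{T21} yields $\Theta_{OS}(Y)\wedge\Theta_{OS}(Z)=\Theta_{OS}(\emptyset)$ and $\Theta_{OS}(Y)\vee\Theta_{OS}(Z)=\Theta_{OS}(X(A))=A\times A$ (and similarly for $\Theta_{O\theta}$), so $\Theta_{OS}(Z)$ (resp.\ $\Theta_{O\theta}(Z)$) is the required complement.

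For the ``only if'' direction, assume a complement exists, say $\Theta_{OS}(Z)$ (resp.\ $\Theta_{O\theta}(Z)$). Using that the union of two semimodal subsets is semimodal and that the union of two closed $\theta$-subsets is again a closed $\theta$-subset (a short verification based on $f_i(W_1\cup W_2)=f_i(W_1)\cup f_i(W_2)$ and the closure identity $\overline{U\cup V}=\overline{U}\cup\overline{V}$), $Y\cap Z$ lies in the relevant class; the meet condition together with injectivity of the isomorphism then gives $Y\cap Z=\emptyset$. In case (i), intersections of semimodal sets are also semimodal, so $Y\cup Z\in{\cal O}_{CS}(X(A))$ and the join condition yields $Y\cup Z=X(A)$; hence $Z=X(A)\setminus Y\in{\cal O}_{CS}(X(A))$, from which one reads off that $Y$ is clopen and that both $Y$ and $X(A)\setminus Y$ are semimodal.

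The principal obstacle lies in case (ii): intersections of closed $\theta$-subsets need not be $\theta$-subsets, so $Y\cup Z$ need not lie in ${\cal O}_{C\theta}(X(A))$ and the equality $Y\cup Z=X(A)$ is not immediate. The plan is to argue by contradiction. Suppose $x\in X(A)\setminus(Y\cup Z)$ and set $V_x:=\overline{\{f_i^A(x):i\in I\}}$. Using (lP5), $f_j^A\circ f_i^A=f_j^A$ gives $f_j^A(V_x)\subseteq V_x$ and $V_x\subseteq\overline{\bigcup_{j\in I}f_j^A(V_x)}$, so $V_x$ is a non-empty closed $\theta$-subset. Since $X(A)\setminus Y$ and $X(A)\setminus Z$ are closed and semimodal, the orbit $\{f_i^A(x):i\in I\}$ is contained in $(X(A)\setminus Y)\cap(X(A)\setminus Z)$ and hence $V_x\subseteq X(A)\setminus(Y\cup Z)$. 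On the other hand, the join $\Theta_{O\theta}(Y)\vee\Theta_{O\theta}(Z)$ corresponds under the isomorphism to $X(A)\setminus M$, where $M$ is the largest closed $\theta$-subset of $X(A)\setminus(Y\cup Z)$; the Boolean hypothesis forces $M=\emptyset$, contradicting the existence of the non-empty $V_x$. Thus $Y\cup Z=X(A)$ and the argument finishes exactly as in (i).
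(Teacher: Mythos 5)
Your proposal is correct and follows the same route as the paper, which proves this proposition simply by invoking the lattice isomorphisms of Theorem~\ref{T21} (complemented congruences correspond to complemented elements of ${\cal O}_{CS}(X(A))$, resp.\ ${\cal O}_{C\theta}(X(A))$). You merely fill in the details the paper leaves implicit — in particular the computation of joins in ${\cal O}_{C\theta}(X(A))$ via the largest closed $\theta$-subset and the orbit-closure argument showing $Y\cup Z=X(A)$ — and these details are sound.
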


\begin{proof}
It is a direct consequence of Theorem \ref{T21}.
\end{proof}

\

Now, we will recall two concepts which will be used in Proposition \ref{P321}. 
Let $Y$ be a topological space and $y_0 \in Y$. A net in a space $Y$ is a map $\varphi : D \to Y$ of some directed set $(D, \prec)$. Besides, we say that $\varphi$ converges to $y_0$ (written
$\varphi \rightarrow y_0$) if for all neighborhood $U(y_0)$ of $y_0$ there is $a\in D$ such that 
for all $a\prec b$, $\varphi(b)\in U(y_0)$. 

\

\begin{propo}\label{P321} 
Let $(X,\{f_i\}_{i \in I})$ be an  ${\rm l}_{\theta}$P--space and let $Y$ be a closed, open and semimodal subset of $X$. Then $X\setminus Y$ is semimodal. 
\end{propo}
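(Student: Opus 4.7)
My plan is to argue by contradiction: suppose there exist $x\in X\setminus Y$ and some $i\in I$ with $f_i(x)\in Y$, and derive a contradiction using the density condition (lP6). As a preparatory observation, semimodality of $Y$ together with (lP5) already yields $f_j(x)=f_j(f_i(x))\in f_j(Y)\subseteq Y$ for every $j\in I$; this will not be applied to $x$ itself, but it motivates the construction that follows.

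Next I would consider the set $W:=f_i^{-1}(Y)\cap(X\setminus Y)$. This set is open, since $Y$ is clopen and $f_i$ is continuous by (lP2), and it is nonempty because $x\in W$. Invoking (lP6), the dense set $\bigcup_{j\in I}f_j(X)$ must meet the nonempty open set $W$, so there exists $z\in W$ of the form $z=f_k(w)$ for some $k\in I$ and $w\in X$. The key observation is that applying (lP5) to $z$ itself gives $f_k(z)=f_k(f_k(w))=f_k(w)=z$; that is, $z$ is a fixed point of $f_k$.

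The contradiction now falls out in two lines. On one hand, $z\in W\subseteq X\setminus Y$, and $f_k(z)=z$, so $f_k(z)\notin Y$. On the other hand, $z\in f_i^{-1}(Y)$ means $f_i(z)\in Y$, and combining semimodality with (lP5) yields $f_k(z)=f_k(f_i(z))\in f_k(Y)\subseteq Y$. These two conclusions about $f_k(z)$ are incompatible, so no such $x$ and $i$ can exist, which is exactly the statement that $X\setminus Y$ is semimodal.

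The step I would expect to require the most thought is the choice of the open set to which density is applied: the natural first guess, $X\setminus Y$ itself, does not produce a useful element, whereas intersecting it with $f_i^{-1}(Y)$ ensures that every element $z$ of the form $f_k(w)$ in this set simultaneously satisfies $f_i(z)\in Y$ and $z\notin Y$, and the identity $f_k(z)=z$ coming from (lP5) then converts these into contradictory statements about $f_k(z)$. Once this set is identified, the remaining manipulations with (lP5) and semimodality are routine.
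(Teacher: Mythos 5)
Your proof is correct, and it takes a genuinely different route from the paper's. The paper also argues by contradiction from a point $x\in X\setminus Y$ with $f_{i_0}(x)\in Y$, but it then shows $x\notin\overline{\bigcup_{i\in I}f_i(Y)}$, uses (lP6) to place $x$ in $\overline{\bigcup_{i\in I}f_i(X\setminus Y)}$, extracts a net $f_{i_d}(x_d)\to x$ with $x_d\in X\setminus Y$, and passes to the limit using continuity of the $f_i$ and closedness of $X\setminus Y$ to contradict $f_i(x)\in Y$ for all $i$. You instead apply density directly to the nonempty open set $W=f_i^{-1}(Y)\cap(X\setminus Y)$ (open because $Y$ is clopen and $f_i$ is continuous), obtain a point $z=f_k(w)\in W$, and use the idempotence $f_k\circ f_k=f_k$ from (lP5) to see that $z$ is a fixed point of $f_k$; then $f_k(z)=z\notin Y$ while $f_k(z)=f_k(f_i(z))\in f_k(Y)\subseteq Y$ by semimodality, a contradiction. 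Every step checks out: both the openness and the closedness of $Y$ are genuinely used, and semimodality enters exactly where needed. What your approach buys is the elimination of nets and of any limit argument --- the whole topological content is reduced to the single observation that a dense set meets every nonempty open set --- which makes the proof shorter and more elementary; the paper's net argument, by contrast, establishes along the way the slightly stronger-looking fact that $f_i(x)\in X\setminus Y$ for all $i$ simultaneously, but this extra information is not needed for the conclusion.
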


\begin{proof} 
Suppose that there is  (1) $x \in X \setminus Y$  such that $f_{i_0}(x)\in Y$ for some $i_0\in I$. Since (2)  $Y$ is semimodal,  we infer by (lP5) that (3) ${f}_{i}(x)\in Y$ for all $i \in I$. Taking into account that $Y$ is a closed subset of $X$ it follows by  (1) that $X \setminus Y$ is a neighborhood of $x$. Then, by (2) we have that $(X \setminus Y) \cap \,\bigcup\limits_{\scriptstyle i\in I} f_i(Y)= \emptyset$ and so, $x \not\in \overline{\bigcup\limits_{\scriptstyle i\in I} f_i(Y)} $. From this last assertion and (lP6) we conclude that $x \in \overline{\bigcup\limits_{\scriptstyle i\in I} f_i(X\setminus Y)} $, from which it follows that there exists  a net $\{ x_d\}_{d\in D} \subseteq X \setminus  Y $ and (4) ${f}_{ i_d}(x_d) \rightarrow x$. Therefore, there exists $d_0 \in D$ such that $\{ f_{i_d} (x_d): d_0 \prec d, \, {d\in D}\} \subseteq X \setminus  Y $. From (2) and (lP5),   $\{ f_{i} (x_d): d_0 \prec d, \, {d\in D}, \, i \in I\} \subseteq X \setminus  Y $. On the other hand, by (4), (lP2) and (lP5) we have that ${f}_{ i}(x_d) \rightarrow f_i(x)$ for all $i\in I$  and from the fact that $X \setminus Y$ is closed, we conclude that  ${f}_{i}(x)\in X \setminus Y$ for all $i \in I$, which contradicts (3).
\end{proof}

\begin{propo}\label{L322N}
Let $(X,\{f_i\}_{i \in I})$ be an ${\rm l}_{\theta}$P--space. Then for each $x \in X$, the set $[\{f_i(x)\}_{i \in I}) \cup  (\{f_i(x)\}_{i \in I}]$ is convex. If $I$ has least element $0$ and greatest element $1$, then for each $x \in X$ the set $[f_0(x), f_1(x)]$ is convex.
\end{propo}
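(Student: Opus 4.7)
The plan is to reduce both convexity assertions to the cardinal-sum decompositions already recorded in (lP11). The key general observation I will invoke is that every summand of a cardinal sum of posets is automatically a convex subset of the whole: if $a \leq c$ in $X$ and $a$ belongs to some summand $S$, then, since comparabilities across distinct summands are forbidden in a cardinal sum, $c$ must belong to $S$ as well.

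First, for the unconditional statement, I would fix $x \in X$ and set $S_x := [\{f_i(x)\}_{i\in I}) \cup (\{f_i(x)\}_{i\in I}]$. Given $a, b \in S_x$ with $a \leq c \leq b$ in $X$, I would apply the first clause of (lP11) to regard $X$ as the cardinal sum of the family $\{S_y : y \in X\}$. Since $c$ is comparable with $a \in S_x$, the cardinal-sum property forces $c \in S_x$, which gives convexity.

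Second, when $I$ has least element $0$ and greatest element $1$, the second clause of (lP11) exhibits $X$ as the cardinal sum of the segments $[f_0(y), f_1(y)]$, $y \in X$. The same squeezing argument — anything caught between two members of $[f_0(x), f_1(x)]$ must sit in the common summand containing them — yields convexity of $[f_0(x), f_1(x)]$. I do not foresee a real obstacle: the proposition is essentially a direct unpacking of the structural fact already supplied by (lP11), and the only content to verify is the elementary implication that comparable elements in a cardinal sum share a summand.
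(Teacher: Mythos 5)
Your argument is correct, but it takes a different route from the paper. The paper proves the statement directly from the axioms: given $y\leq z\leq w$ with $y,w$ in the set, it uses (lP3) and (lP5) to show $f_i(y)=f_i(z)=f_i(w)=f_i(x)$ for all $i\in I$, and then (lP8) to place $z$ in $[\{f_i(x)\}_{i\in I})\cup(\{f_i(x)\}_{i\in I}]$; the second clause follows from the observed equality $[f_0(x),f_1(x)]=[\{f_i(x)\}_{i\in I})\cup(\{f_i(x)\}_{i\in I}]$. You instead deduce everything from (lP11), using the (correct) observation that each summand of a cardinal sum of posets is both increasing and decreasing, hence convex. Since the paper states (lP11) in the preliminaries as an established property, your reduction is legitimate and in fact yields more than convexity. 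What you give up is self-containedness: (lP11) is a strictly stronger structural statement whose justification (in the cited duality paper) necessarily contains the very comparability analysis via (lP3), (lP5), (lP8) that the authors carry out here, so your proof is in spirit a deferral of the real work rather than an elimination of it. The paper's computation also isolates the reusable fact that every element squeezed between two members of the set has the same images under all $f_i$, which is what later arguments (e.g.\ Proposition \ref{L322}) actually exploit.
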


\begin{proof} 
Let $y,z,w \in X$ be such that $y \leq z \leq w$ and let $y,w \in [\{f_i(x)\}_{i \in I}) \cup  (\{f_i(x)\}_{i \in I}]$. Then, by (lP3) and (lP5) we have that $f_i(y)= f_i(z) = f_i(w)= f_i(x)$ for all $i \in I$. This statement and (lP8) imply that $z \in [\{f_i(x)\}_{i \in I}) \cup  (\{f_i(x)\}_{i \in I}]$ and so, $[\{f_i(x)\}_{i \in I}) \cup  (\{f_i(x)\}_{i \in I}]$ is a convex set. 

On the other hand, if $I$ has least and greatest element, for each  $x \in X$ we have that $[f_0(x), f_1(x)] =[\{f_i(x)\}_{i \in I}) \cup  (\{f_i(x)\}_{i \in I}]$ and the proof is concluded.
\end{proof}

\begin{propo}\label{L322}
Let $(X,\{f_i\}_{i \in I})$ be an ${\rm l}_{\theta}$P--space and let $Y \subseteq X$. Then the following conditions are  equivalent: 
\begin{itemize}
\item[{\rm(i)}] $Y$ is modal,
\item[{\rm(ii)}] $Y =\bigcup\limits_{\scriptstyle  y\in Y} ([\{f_i(y)\}_{i \in I})\cup (\{f_i(y)\}_{i \in I}])$. Besides, if $I$ has least and greatest element, $Y =\bigcup\limits_{\scriptstyle  y\in Y} [{f_0}(y),{ f_1}(y)]$.  
\end{itemize}
\end{propo}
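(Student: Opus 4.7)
The plan is to use the structural properties (lP3), (lP5), (lP8) of $l_\theta P$-spaces to show that the set $[\{f_i(y)\}_{i\in I})\cup(\{f_i(y)\}_{i\in I}]$ is exactly the "fiber" of $y$ under all the $f_j$'s, and then translate modality as invariance under all these $f_j^{-1}$. The key preliminary observation, which I will use repeatedly, is that if $z$ is comparable to some $f_{i_0}(y)$, then by (lP3) we have $f_j(z)=f_j(f_{i_0}(y))$ for every $j\in I$, and by (lP5) this common value is exactly $f_j(y)$.

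For (i) $\Rightarrow$ (ii), the inclusion $Y\subseteq \bigcup_{y\in Y}([\{f_i(y)\}_{i\in I})\cup(\{f_i(y)\}_{i\in I}])$ is immediate: given $y\in Y$, pick any $i_0\in I$; by (lP8) either $y\leq f_{i_0}(y)$ or $f_{i_0}(y)\leq y$, so $y$ lies in the set indexed by itself. For the reverse inclusion, take $z$ in the union, witnessed by some $y\in Y$. By the preliminary observation, $f_j(z)=f_j(y)$ for all $j$. Since $Y$ is modal and $y\in Y$, $y\in f_j^{-1}(Y)$, i.e.\ $f_j(y)\in Y$, so $f_j(z)\in Y$, and modality once more gives $z\in f_j^{-1}(Y)=Y$.

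For (ii) $\Rightarrow$ (i), I verify $Y = f_j^{-1}(Y)$ for each $j$. The inclusion $Y\subseteq f_j^{-1}(Y)$ follows the same way as above: for $z\in Y$ take the witnessing $y$, conclude $f_j(z)=f_j(y)$, and note that $f_j(y)$ itself belongs to $[\{f_i(y)\}_{i\in I})\cup(\{f_i(y)\}_{i\in I}]\subseteq Y$. For $f_j^{-1}(Y)\subseteq Y$, suppose $f_j(z)\in Y$, and let $y\in Y$ be a witness for $f_j(z)$ in (ii); applying the preliminary observation to $f_j(z)$ and using (lP5) yields $f_k(z)=f_k(f_j(z))=f_k(y)$ for all $k$. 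Then (lP8) applied to $z$ itself (with any $i_0$) puts $z$ in $[\{f_i(z)\}_{i\in I})\cup(\{f_i(z)\}_{i\in I}]$, which equals $[\{f_i(y)\}_{i\in I})\cup(\{f_i(y)\}_{i\in I}]$ and is contained in $Y$ by hypothesis.

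For the last assertion (with $I$ having least $0$ and greatest $1$), I would just observe that by (lP4) every $f_i(y)$ lies between $f_0(y)$ and $f_1(y)$, while (lP9) and (lP10) guarantee that $f_0(y)$ is the only element of $X$ strictly below $y$ and $f_1(y)$ the only one strictly above, so the decreasing closure $(\{f_i(y)\}_{i\in I}]$ equals $(f_0(y),y]\cup\{f_0(y)\}$ and the increasing closure $[\{f_i(y)\}_{i\in I})$ equals $[y,f_1(y))\cup\{f_1(y)\}$; combining these gives $[f_0(y),f_1(y)]$, matching Proposition \ref{L322N}. The only mildly delicate point — hardly an obstacle — is keeping clear that (lP3) treats comparable elements symmetrically (it uses $\leq$ both ways in the sense that $x\leq y$ forces $f_i(x)=f_i(y)$), which is what makes the "fiber" $\{z:f_i(z)=f_i(y)\text{ for all }i\}$ coincide with the order-neighbourhood $[\{f_i(y)\}_{i\in I})\cup(\{f_i(y)\}_{i\in I}]$.
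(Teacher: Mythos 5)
Your proof of the equivalence (i) $\Leftrightarrow$ (ii) is correct and is essentially the paper's own argument: both directions rest on the observation that, by (lP3) and (lP5), any $z$ comparable to some $f_{i_0}(y)$ satisfies $f_j(z)=f_j(y)$ for all $j\in I$, combined with (lP8) to place each point inside its own order--neighbourhood. The only place you go astray is the final clause about $[f_0(y),f_1(y)]$. You assert that (lP9) and (lP10) make $f_0(y)$ ``the only element of $X$ strictly below $y$'' and $f_1(y)$ the only one strictly above; that is not what those axioms say --- they say $f_0(y)$ is the unique \emph{minimal} element preceding $y$ and $f_1(y)$ the unique \emph{maximal} element following it, and nothing forbids further elements strictly between $f_0(y)$ and $y$. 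Consequently your descriptions of the two closures are also wrong: since $f_1(y)\in\{f_i(y)\}_{i\in I}$, the decreasing closure $(\{f_i(y)\}_{i\in I}]$ already contains every $z\leq f_1(y)$, hence all of $[f_0(y),f_1(y)]$ (if $z\leq f_1(y)$ then $f_i(z)=f_i(y)$ for all $i$ by (lP3) and (lP5), so $f_0(y)=f_0(z)\leq z$ by (lP9)); it is not $[f_0(y),y]$. None of this damages the result, because all you actually need is the identity $[\{f_i(y)\}_{i\in I})\cup(\{f_i(y)\}_{i\in I}]=[f_0(y),f_1(y)]$, which is exactly what the proof of Proposition \ref{L322N} records and what the paper itself invokes at this point; citing that (as you half do) and deleting the faulty re-derivation fixes the paragraph.
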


\begin{proof}
(i)\, $\Rightarrow$\, (ii): By (lP8),  $Y \subseteq \bigcup\limits_{\scriptstyle y\in Y} ([\{f_i(y)\}_{i \in I}) \cup  (\{f_i(y)\}_{i \in I}])$. On the other hand, if $ z \in [\{f_i(y)\}_{i \in I}) \cup  (\{f_i(y)\}_{i \in I}]$, for some $y \in Y$, then by (lP3) and (lP5) we have that $f_i(z) = f_i(y)$ for all $i\in I$ and so, from the hypothesis we conclude that $z \in Y$.
 
\vspace{2mm} 

(ii)\, $\Rightarrow$\, (i): From (lP3), (lP5) and (lP8),  ${f_i}^{-1}(([\{f_i(y)\}_{i \in I}) \cup  (\{f_i(y)\}_{i \in I}]))= [\{f_i(y)\}_{i \in I}) \cup (\{f_i(y)\}_{i \in I}]$ for all $i\in I$ and so, $Y = {f_i}^{-1}(Y)$ for all $i\in I$.
In case that $I$ has least and greatest element, it holds that $\bigcup\limits_{\scriptstyle  y\in Y} ([\{f_i(y)\}_{i \in I})\cup (\{f_i(y)\}_{i \in I}])=\bigcup\limits_{\scriptstyle  y\in Y} [{f_0}(y),{ f_1}(y)]$ and hence, $Y =\bigcup\limits_{\scriptstyle  y\in Y}[{f_0}(y),{ f_1}(y)]$.
\end{proof}

\begin{cor}\label{C321}
Let $(X,\{f_i\}_{i \in I})$ be an ${\rm l}_{\theta}$P--space. Then the following conditions hold, for each modal subset $Y$ of  $X$:
\begin{itemize}
\item[{\rm(i)}]  $X \setminus Y$ is modal,
\item[{\rm(ii)}] $Y$ is convex,
\item[{\rm(iii)}]  $Y$ is increasing and decreasing. 
\end{itemize}
\end{cor}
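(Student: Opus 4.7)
The plan is to exploit the cardinal-sum decomposition (lP11) together with Propositions \ref{L322N} and \ref{L322}, which tell us that a modal set $Y$ is exactly a union of ``full components'' of $X$ (where a component means a set of the form $[\{f_i(y)\}_{i\in I})\cup(\{f_i(y)\}_{i\in I}]$). Once this structural picture is in hand, (i), (ii) and (iii) all drop out immediately.

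For (i), I would argue directly from Definition \ref{D321}. Using only the equivalence $x\in f_i^{-1}(X\setminus Y)$ iff $f_i(x)\notin Y$ iff $x\notin f_i^{-1}(Y)=Y$ (the last equality by modality of $Y$), we obtain $f_i^{-1}(X\setminus Y)=X\setminus Y$ for every $i\in I$. No appeal to the other propositions is needed here; it is a one-line set-theoretic manipulation.

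For (ii) and (iii), I would first invoke Proposition \ref{L322} to write $Y=\bigcup_{y\in Y}([\{f_i(y)\}_{i\in I})\cup(\{f_i(y)\}_{i\in I}])$, and then combine this with (lP11), which asserts that these sets are precisely the blocks of a cardinal-sum decomposition of $X$. In particular, elements lying in different blocks are pairwise incomparable. For increasingness (resp.\ decreasingness), take $y\in Y$ and $w\in X$ with $y\leqslant w$ (resp.\ $w\leqslant y$); comparability forces $y$ and $w$ to lie in the same block, which by Proposition \ref{L322} is entirely contained in $Y$, whence $w\in Y$. For convexity, given $y,w\in Y$ and $z\in X$ with $y\leqslant z\leqslant w$, the same comparability argument places $y$, $z$ and $w$ in a common block; since that block is convex by Proposition \ref{L322N} and contained in $Y$, we conclude $z\in Y$.

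I do not expect any real obstacle: (i) is formal, and (ii)--(iii) are bookkeeping once Propositions \ref{L322N} and \ref{L322} are applied. The only subtle point worth stating carefully is the use of (lP11) to guarantee that two comparable elements of $X$ necessarily lie in the same block of the decomposition; without this, a union of convex components need not itself be convex, so it is the cardinal-sum hypothesis that makes the argument go through.
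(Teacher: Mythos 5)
Your proof is correct and follows essentially the same route as the paper, which for (ii) and (iii) simply cites Propositions \ref{L322}, \ref{L322N} and (lP11) — your fleshed-out argument (blocks of the cardinal sum, incomparability across blocks, convexity of each block) is exactly the intended content. The only deviation is part (i), where you give a direct preimage computation from Definition \ref{D321} instead of invoking the block decomposition; this is if anything cleaner, and is the same argument the paper itself uses in Lemma \ref{C312} for the finite-valued case.
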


\begin{proof}
(i) and (iii): They are an immediate consequence of Proposition \ref{L322} and (lP11).

\vspace{2mm}

(ii): It follows from Propositions \ref{L322}, \ref{L322N} and (lP11).  
\end{proof}

\begin{propo}\label{P322}
Let $(X,\{f_i\}_{i \in I})$ be an ${\rm l}_{\theta}$P--space and let $Y$ be a closed and open subset of $X$. Then the  following conditions are equivalent:

\begin{itemize}
\item[{\rm(i)}] $Y$ is a ${\theta}-$subset,
\item[{\rm(ii)}] $Y$ is semimodal,
\item[{\rm(iii)}] $Y$ is modal.
\end{itemize}
\end{propo}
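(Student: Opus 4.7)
The plan is to prove the three equivalences by first disposing of the direct implications $(\text{iii}) \Rightarrow (\text{ii})$ and $(\text{i}) \Rightarrow (\text{ii})$, and then using Proposition \ref{P321} as the central lever to obtain both $(\text{ii}) \Rightarrow (\text{iii})$ and $(\text{ii}) \Rightarrow (\text{i})$. The key point is that Proposition \ref{P321} promotes semimodality of a closed open $Y$ to semimodality of its complement, and this symmetry is exactly what allows us to strengthen semimodality to the two stronger conditions.

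The implication $(\text{iii}) \Rightarrow (\text{ii})$ is immediate, since $Y = f_i^{-1}(Y)$ forces $f_i(Y) \subseteq Y$ for every $i \in I$; and $(\text{i}) \Rightarrow (\text{ii})$ is just the first half of the definition of a $\theta$-subset. For $(\text{ii}) \Rightarrow (\text{iii})$, Proposition \ref{P321} yields that $X \setminus Y$ is semimodal, so that $f_i(Y) \subseteq Y$ gives $Y \subseteq f_i^{-1}(Y)$ while $f_i(X \setminus Y) \subseteq X \setminus Y$ gives $f_i^{-1}(Y) \subseteq Y$; combining these equalities for every $i \in I$ shows that $Y$ is modal.

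The only substantive work is $(\text{ii}) \Rightarrow (\text{i})$, where one must verify $Y \subseteq \overline{\bigcup_{i \in I} f_i(Y)}$. I would fix $y \in Y$ and an arbitrary open neighborhood $U$ of $y$ and observe that $U \cap Y$ is still an open neighborhood of $y$, since $Y$ is open. By the density condition (lP6) it meets $\bigcup_{i \in I} f_i(X)$, so there is some $z \in U \cap Y$ with $z = f_{i_0}(x)$ for suitable $x \in X$ and $i_0 \in I$. If $x$ lay in $X \setminus Y$, the semimodality of $X \setminus Y$ coming from Proposition \ref{P321} would force $z \in f_{i_0}(X \setminus Y) \subseteq X \setminus Y$, contradicting $z \in Y$; hence $x \in Y$ and $z \in f_{i_0}(Y) \subseteq \bigcup_{i \in I} f_i(Y)$, so $U$ meets $\bigcup_{i \in I} f_i(Y)$. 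The delicate point (and the main obstacle) is precisely this density step: (lP6) only guarantees density of $\bigcup_{i \in I} f_i(X)$, and it is Proposition \ref{P321} that lets us transfer the witness into $\bigcup_{i \in I} f_i(Y)$ by excluding the unwanted case $x \in X \setminus Y$.
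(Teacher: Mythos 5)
Your proof is correct, and it leans on the same central lever as the paper's — Proposition \ref{P321}, which upgrades semimodality of a closed and open $Y$ to semimodality of $X\setminus Y$ — but the route differs in two respects worth noting. First, for (ii) $\Rightarrow$ (iii) the paper does not argue directly with preimages: it shows that $Y$ is a union of the sets $[\{f_i(y)\}_{i\in I})\cup(\{f_i(y)\}_{i\in I}]$ and then invokes the structural characterization of modal sets in Proposition \ref{L322}. Your observation that $f_i(Y)\subseteq Y$ gives $Y\subseteq f_i^{-1}(Y)$ while $f_i(X\setminus Y)\subseteq X\setminus Y$ gives $f_i^{-1}(Y)\subseteq Y$ reaches $Y=f_i^{-1}(Y)$ in one line and dispenses with Proposition \ref{L322} entirely; this is a genuine simplification. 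Second, for the density inclusion $Y\subseteq\overline{\bigcup_{i\in I}f_i(Y)}$ the paper proves (iii) $\Rightarrow$ (i) at the level of sets: from (lP6) it writes $X=\overline{\bigcup_{i\in I}f_i(Y)}\cup\overline{\bigcup_{i\in I}f_i(X\setminus Y)}$ and intersects with $Y$, using that the second closure lies in $X\setminus Y$. Your pointwise neighborhood argument (shrink a neighborhood of $y$ to $U\cap Y$, pull a witness $z=f_{i_0}(x)$ out of the dense set $\bigcup_{i\in I}f_i(X)$, and exclude $x\in X\setminus Y$ via Proposition \ref{P321}) proves the same thing directly from (ii); it is more elementary but slightly longer, and the two are really the same idea unwound at the level of points. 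Both arrangements of implications cover all three equivalences, so there is nothing to fix.
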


\begin{proof}
(i)\, $\Rightarrow$\, (ii): It follows immediately.

\vspace{2mm}

(ii)\, $\Rightarrow$\, (iii): From (lP8) we have that $Y\subseteq \bigcup\limits_{\scriptstyle  y\in Y} ([\{f_i(y)\}_{i \in I}) \cup (\{f_i(y)\}_{i \in I}])$.  On the other hand, if $z \in [\{f_i(y)\}_{i \in I})\cup (\{f_i(y)\}_{i \in I}]$  for some $y \in Y$, by (lP3) and (lP5) we conclude  that $f_i(z) = f_i(y)$ for all $i \in I$. Furthermore, from the hypothesis we infer that $f_i(z)\in Y$ for all $i \in I$ and then, Proposition \ref{P321} allows us to assert that $z \in Y$. Therefore, $Y =\bigcup\limits_{\scriptstyle  y\in Y}([\{f_i(y)\}_{i \in I})\cup (\{f_i(y)\}_{i \in I}])$ and by 
Proposition \ref{L322} we have that $Y$ is modal.

\vspace{2mm}

(iii)\, $\Rightarrow$\, (i): It follows immediately that $Y$ is semimodal. Furthermore, since $Y$ is a closed and open subset of $X$, from Proposition \ref{P321} we get that $X \setminus Y$ is  semimodal, which implies that  (1) $\overline{\bigcup\limits_{\scriptstyle i\in I} f_i( Y)} \subseteq Y$ and (2) $\overline{\bigcup\limits_{\scriptstyle i\in I} f_i(X\setminus Y)} \subseteq X \setminus Y$.  On the other hand, by (lP6) it holds (3) $X = \,\overline{\bigcup\limits_{ \scriptstyle i\in I} f_i( Y)}\, \cup \, \overline{\bigcup\limits_{\scriptstyle i\in I} f_i(X\setminus Y)}$. From (2) and (3) we infer that $Y \subseteq \overline{\bigcup\limits_{\scriptstyle i\in I} f_i( Y)}$ and so,  by (1) we conclude that
 $Y =\,\overline{\bigcup\limits_{\scriptstyle i\in I} f_i( Y)}$.
\end{proof}

\begin{cor}\label{C322}
Let $(X,\{f_i\}_{i \in I})$ be a ${\rm l}_{\theta}$P--space. Then the following conditions hold for each closed, open  
and $\theta$-subset $Y$ of $X$:
\begin{itemize}
\item[\rm{(i)}] $X \setminus Y$ is a $\theta$-subset,  
\item[\rm{(ii)}] $Y$ and $X \setminus Y$ are convex subsets of $X$.
\end{itemize}
\end{cor}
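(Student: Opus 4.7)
The plan is to derive both parts as essentially immediate consequences of the chain of equivalences established in Proposition \ref{P322} together with Propositions \ref{P321}, \ref{L322} and Corollary \ref{C321}. The key preliminary observation is that, since $Y$ is closed and open in $X$, the complement $X\setminus Y$ is again both closed and open, which puts it in the setting where the equivalences of Proposition \ref{P322} apply.

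For part (i), I would first invoke Proposition \ref{P322} to pass from the hypothesis that $Y$ is a closed, open $\theta$-subset to the conclusion that $Y$ is semimodal. Then, since $Y$ is closed, open and semimodal, Proposition \ref{P321} directly yields that $X\setminus Y$ is semimodal. Finally, because $X\setminus Y$ is closed, open and semimodal, a second application of Proposition \ref{P322} (the implication (ii)$\Rightarrow$(i)) promotes it to a $\theta$-subset, giving (i).

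For part (ii), I would again use Proposition \ref{P322} to upgrade $Y$ to a modal set. Corollary \ref{C321}(i) then provides that $X\setminus Y$ is modal as well, and Corollary \ref{C321}(ii) says that every modal subset of an $l_{\theta}P$-space is convex. Applying this to both $Y$ and $X\setminus Y$ finishes (ii).

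I do not anticipate any genuine obstacle: the entire argument is a short chaining of previously established results, and the only thing to be careful about is checking that the hypotheses ``closed'' and ``open'' are preserved under complementation so that Propositions \ref{P322} and \ref{P321} can be invoked twice. No new computations with nets, convergence, or the density condition (lP6) are required, since those have already been absorbed into Propositions \ref{P321} and \ref{P322}.
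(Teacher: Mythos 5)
Your argument is correct and is essentially the paper's own proof, which likewise derives both parts by chaining Proposition \ref{P322} with the complementation results (Proposition \ref{P321}, Corollary \ref{C321}). The only cosmetic difference is that for (i) you pass through ``semimodal'' via Proposition \ref{P321} while the paper passes through ``modal'' via Corollary \ref{C321}(i); since Proposition \ref{P322} makes these notions equivalent for closed and open sets, the two routes are interchangeable.
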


\begin{proof}   
\rm{(i)}: It follows from Proposition \ref{P322} and Corollary \ref{C321}. 

\vspace{2mm} 

\rm{(ii)}: It is a direct consequence of Proposition \ref{P322} and Corollaries \ref{C321} and \ref{C322}.
\end{proof}

\begin{propo}\label{P323}
Let  $A$ be an  ${\rm LM}_{\theta}-$algebra and let ${\L}_{\theta}(A)$ be the ${\rm l}_{\theta}$P--space associated with $A$.  Then for each $Y\subseteq X(A)$ the following conditions are equivalent: 

\begin{itemize}
\item[ \rm{(i)}]  $Y$ is a closed, open and modal subset of $X(A)$,
\item[\rm{(ii)}]  there is  $b\in C(A)$ such that  $Y =\sigma_A( b)$.
\end{itemize}
\end{propo}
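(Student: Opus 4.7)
The plan is to prove both directions by leveraging Priestley duality for the lattice reduct together with condition (L6), which characterizes Boolean elements as fixed points of all $\phi_i$.

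For the direction (ii) $\Rightarrow$ (i), I start with $b \in C(A)$. By Priestley duality, $\sigma_A(b)$ is automatically a closed, open, and increasing subset of $X(A)$, so it only remains to show that $\sigma_A(b)$ is modal. By (L6), $\phi_i b = b$ for all $i \in I$, and since $\sigma_A$ is an ${\rm LM}_\theta$-isomorphism, $(f_i^A)^{-1}(\sigma_A(b)) = \phi_i^{X(A)}(\sigma_A(b)) = \sigma_A(\phi_i b) = \sigma_A(b)$ for every $i \in I$, which is exactly the definition of a modal set.

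For the direction (i) $\Rightarrow$ (ii), the key observation is that by Corollary \ref{C321}(iii), every modal subset is increasing, so the hypothesis already gives us that $Y$ is a closed, open, and increasing subset of $X(A)$. By Priestley duality, there exists a unique $b \in A$ with $Y = \sigma_A(b)$. To prove $b \in C(A)$, I use the modality of $Y$: for each $i \in I$, $\sigma_A(\phi_i b) = (f_i^A)^{-1}(\sigma_A(b)) = \sigma_A(b)$. Since $\sigma_A$ is injective, $\phi_i b = b$ for all $i \in I$, which by (L6)(iv) places $b$ in $C(A)$.

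The argument is essentially a two-line translation through the duality, with Corollary \ref{C321}(iii) removing the only potential obstacle, namely the need to verify that $Y$ is increasing before applying Priestley's representation. If anything, the subtle point is making sure the duality is being applied correctly: $Y$ being clopen and modal does not immediately appear to be increasing, but modality forces this through the structure of the $f_i$. The equivalence between $\phi_i b = b$ for all $i$ and $b \in C(A)$ is then supplied directly by (L6), closing the argument without any further computation.
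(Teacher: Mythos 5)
Your proposal is correct and follows essentially the same route as the paper: both directions translate modality of $Y$ into $\phi_i b = b$ via (A1) and the injectivity of $\sigma_A$, invoke (L6) to identify $b$ as Boolean, and use Corollary \ref{C321}(iii) to see that a clopen modal set is increasing and hence lies in $D(X(A))$. No substantive differences.
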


\begin{proof}
(i)\, $\Rightarrow$\, (ii): From the hypothesis and item (iii) in Corollary \ref{C321}, $Y \in D(X(A))$. Hence, there exists $a \in A$ such that (1) $Y =\,\sigma_A(a)$. Taking into account that $Y = {f^A_i}^{-1}(Y)$  for all $i \in I$,  $\sigma_A$ is  an ${\rm LM}_{\theta}-$isomorphism and (A1) we conclude that $Y = \sigma_A(\phi_i a)$ for all $i \in I$  and so, by (1) we have that $a = \phi_i(a)$ for all $i \in I$. This statement and (L6) imply that  $a \in C(A)$. 

\vspace{2mm}

(ii)\, $\Rightarrow$\, (i): From the hypothesis it follows that $Y \in D(X(A))$. Besides, for all  $i \in I$, $\sigma_A(\phi_i b) = \,\phi_i^{D(X(A))}(\sigma_A(b))$ which by (A1) entails $\sigma_A(\phi_i b) = \, {f^A_i}^{-1}(Y) $. Since $b \in C(A)$ by (L6) we have that  $\phi_i b =b$ for all $i \in I$. Therefore, ${f^A_i}^{-1}(Y) = Y$  for all $i \in I$ which completes the proof.
\end{proof}

\

Let $(X,\{f_i\}_{i \in I})$ be a ${\rm l}_{\theta}$P--space. We will denote by ${\cal C \cal O}_{M}(X)$  the Boolean lattice of all closed, open and modal subsets of $X$.

\vspace{2mm}

\begin{cor}\label{C323} 
Let  $A$ be an  ${\rm LM}_{\theta}-$algebra and let ${\L}_{\theta}(A)$ be the ${\rm l}_{\theta}$P--space associated with $A$. Then ${\cal C \cal O}_{M}(X(A))$ is isomorphic to the Boolean lattice $C(A)$.
\end{cor}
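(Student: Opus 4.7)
The plan is to exhibit the restriction of $\sigma_A$ to $C(A)$ as the desired isomorphism, using Proposition \ref{P323} to pin down its image and standard properties of the Priestley embedding $\sigma_A$ to verify that it preserves the Boolean structure.

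First I would define $\Psi \colon C(A) \to {\cal C \cal O}_{M}(X(A))$ by $\Psi(b) = \sigma_A(b)$. The implication (ii) $\Rightarrow$ (i) of Proposition \ref{P323} guarantees that $\Psi(b) \in {\cal C \cal O}_{M}(X(A))$ for every $b \in C(A)$, so $\Psi$ is well-defined; the implication (i) $\Rightarrow$ (ii) of the same proposition shows that every closed, open and modal subset of $X(A)$ has the form $\sigma_A(b)$ with $b \in C(A)$, giving surjectivity. Injectivity is immediate from the fact that $\sigma_A$ itself is an ${\rm LM}_{\theta}$-isomorphism between $A$ and $D(X(A))$.

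Next I would verify that $\Psi$ is a Boolean lattice homomorphism. Since $\sigma_A$ preserves $\vee, \wedge, 0, 1$ (as an ${\rm LM}_{\theta}$-iso\-mor\-phism it certainly preserves the underlying distributive lattice operations), the restriction $\Psi$ preserves $\wedge, \vee, 0, 1$ as well, provided these operations on $C(A)$ are the ones inherited from $A$; this is precisely the case because $C(A)$ is the Boolean subalgebra of Boolean elements of $A$. For complementation, if $b \in C(A)$ with Boolean complement $b'$ in $A$, then $\sigma_A(b) \cap \sigma_A(b') = \sigma_A(0) = \emptyset$ and $\sigma_A(b) \cup \sigma_A(b') = \sigma_A(1) = X(A)$, so $\sigma_A(b')$ is the Boolean complement of $\sigma_A(b)$ in ${\cal C \cal O}_{M}(X(A))$.

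There is no real obstacle here: the whole content of the corollary has already been packaged into Proposition \ref{P323}, which characterises the closed, open and modal subsets of $X(A)$ as exactly the images under $\sigma_A$ of the Boolean elements of $A$. What remains is the routine check that $\Psi$ is a Boolean homomorphism, which follows from $\sigma_A$ being an ${\rm LM}_{\theta}$-isomorphism together with the observation (which one could attribute to (L6)) that the Boolean operations on $C(A)$ coincide with the restrictions of the lattice operations of $A$. Thus $\Psi$ is the required Boolean lattice isomorphism.
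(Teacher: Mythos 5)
Your proposal is correct and follows exactly the paper's route: the paper's own proof is the one-line observation that Proposition \ref{P323} makes the restriction of $\sigma_A$ to $C(A)$ a Boolean isomorphism, and you simply spell out the well-definedness, bijectivity, and preservation of the Boolean operations that this one-liner leaves implicit.
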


\begin{proof} 
Proposici\'on \ref{P323} allows us to assert that the restriction of $\sigma_A$ to $C(A)$ is a Boolean isomorphism.
\end{proof}

\

The above results allow us to obtain the description of  Boolean congruences we were looking for.

\vspace{2mm}

\begin{theo}\label{T321}
Let  $A$ be an  ${\rm LM}_{\theta}-$algebra and let ${\L}_{\theta}(A)$ be the ${\rm l}_{\theta}$P--space associated with $A$. Then the lattice  ${\cal C \cal O}_{M}(X(A))$ is isomorphic to the lattice {\rm(}dual lattice{\rm)} $Con_{bLM_{\theta}}(A)$ of Boolean congruences on $A$, and the isomorphism  $\Theta_{OM}$ {\rm(}$\Theta_{CM}${\rm)} is the restriction of  $\Theta_{OS}$ {\rm(}$\Theta_{S}${\rm)} to ${\cal C \cal O}_{M}(X(A))$, where these functions are defined as in Theorem {\rm \ref{T21}} {\rm ( in (A4))} respectively.  
\end{theo}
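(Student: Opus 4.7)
The plan is to read off this theorem as a restriction of the general isomorphism already established in Theorem \ref{T21}, once we identify precisely which closed and open subsets $Y$ of $X(A)$ yield Boolean congruences via $\Theta_{OS}$. The key observation is that the characterization of Boolean congruences given in Proposition \ref{L321} collapses, for closed and open $Y$, to the single condition $Y\in {\cal C\cal O}_M(X(A))$, after which the restriction argument is essentially automatic.

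More concretely, the first step is to recall from Proposition \ref{L321}(i) that $\Theta_{OS}(Y)$ is Boolean exactly when $Y$ is closed and open and both $Y$ and $X(A)\setminus Y$ are semimodal. The second step is to invoke Proposition \ref{P322}: for a closed and open subset, being semimodal is equivalent to being modal. Combining this with Corollary \ref{C321}(i), which says that the complement of a modal set is modal, the two semimodality conditions of Proposition \ref{L321}(i) simultaneously upgrade to a single condition, namely that $Y$ itself be modal. Thus, among the closed and open subsets $Y$ of $X(A)$, those for which $\Theta_{OS}(Y)$ lies in $Con_{bLM_\theta}(A)$ are exactly the elements of ${\cal C\cal O}_M(X(A))$. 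Note also that every such $Y$ belongs to the domain ${\cal O}_{CS}(X(A))$ of $\Theta_{OS}$, so there is no well-definedness issue.

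With this identification in hand, the map $\Theta_{OM}$ is by definition the restriction of the isomorphism $\Theta_{OS}$ of Theorem \ref{T21} to the sublattice ${\cal C\cal O}_M(X(A))$, and its image is precisely $Con_{bLM_\theta}(A)$; since $\Theta_{OS}$ is a lattice isomorphism, the restriction is one too. For the dual statement concerning $\Theta_{CM}$, use the relation $\Theta_{OS}(G)=\Theta_{S}(X(A)\setminus G)$ noted in the proof of Theorem \ref{T21}, together with the fact that complementation sends ${\cal C\cal O}_M(X(A))$ onto itself (closed, open, and modal are all preserved under complement by Corollary \ref{C321}(i)) and reverses inclusions; combining this anti-isomorphism with $\Theta_{OM}$ yields that $\Theta_{CM}$ is a dual isomorphism from ${\cal C\cal O}_M(X(A))$ onto $Con_{bLM_\theta}(A)$.

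There is no real obstacle in this plan: every ingredient has been set up in the preceding propositions, and the only point requiring minimal care is checking that ${\cal C\cal O}_M(X(A))$ is closed under finite unions and intersections (trivial, since all three defining properties are), so that speaking of a sublattice is meaningful.
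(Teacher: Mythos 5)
Your proposal is correct and follows essentially the same route as the paper: both directions reduce to the characterization of Boolean congruences in Proposition \ref{L321} combined with the equivalence of semimodal and modal for closed and open sets (Proposition \ref{P322}) and the closure of ${\cal C\cal O}_M(X(A))$ under complementation (Corollary \ref{C321}), after which the theorem is the restriction of the isomorphism of Theorem \ref{T21}, with the dual statement for $\Theta_{CM}$ obtained via $\Theta_{CM}(Y)=\Theta_{OM}(X(A)\setminus Y)$ exactly as in the paper.
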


\begin{proof}  
If $Y$ is a closed, open and modal subset of $X(A)$, then from Propositions \ref{L321}, \ref{P321} and  \ref{P322} 
we have that $\Theta_{OM}(Y)$ is a Boolean congruence on $A$. Conversely, let $\varphi \in Con_{bLk_{\theta}}(A)$. Then, by Theorem \ref{T21} and Proposition \ref{L321} we infer that there is  a closed and open subset $Y$ of $X(A)$ such that $Y$,  $X(A)\setminus Y$ are  semimodal and  $\varphi = \Theta_{OS}(Y)$.
These assertions and Proposition \ref{P322} imply that $ Y\in {\cal C \cal O}_{M}(X(A))$ and $\Theta_{OS}(Y) = \Theta_{OM}(Y)$ and so, by Theorem \ref{T21} we conclude the proof. 

On the other hand, taking into account that $Y \in {\cal C \cal O}_{M}(X(A))$ if and only if  $X(A)\setminus Y \in {\cal C \cal O}_{M}(X(A))$  and that $\Theta_{CM}(Y) = \Theta_{OM}(X(A) \setminus Y)$ we infer that $\Theta_{CM}$  establishes an isomorphism between  ${\cal C \cal O}_{M}(X(A))$  and the dual of $Con_{bLM_{\theta}}(A)$. 
\end{proof}

\begin{cor}\label{C324} 
Let  $A$ be an  ${\rm LM}_{\theta}-$algebra and let ${\L}_{\theta}(A)$ be the ${\rm l}_{\theta}$P--space associated with $A$. If $\varphi$ is a congruence on $A$ then the following conditions are equivalent:

\begin{itemize}
\item[ \rm{(i)}] $\varphi$ is a Boolean congruence,
\item[\rm{(ii)}] $\varphi$ is a Boolean $\theta-$congruence.
\end{itemize}
\end{cor}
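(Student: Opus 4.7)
The plan is to identify both classes of congruences with the same collection of subsets of $X(A)$, namely the Boolean lattice $\mathcal{CO}_M(X(A))$ of closed, open and modal subsets, by exploiting Theorem \ref{T321} together with Proposition \ref{P322}, which asserts that for a closed and open subset of an ${\rm l}_\theta$P--space the properties of being modal, being semimodal, and being a $\theta$-subset all coincide.

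For the direction (i) $\Rightarrow$ (ii), I would assume $\varphi$ is a Boolean congruence and invoke Theorem \ref{T321} to obtain $Y \in \mathcal{CO}_M(X(A))$ with $\varphi = \Theta_{OM}(Y) = \Theta_{OS}(Y)$. Since $Y$ is closed, open and modal, Proposition \ref{P322} yields that $Y$ is a $\theta$-subset, and Corollary \ref{C322} then gives that $X(A)\setminus Y$ is likewise a $\theta$-subset. Noting that $\Theta_{OS}$ and $\Theta_{O\theta}$ are defined by the same symmetric--difference formula and hence agree on their common domain, Proposition \ref{L321}(ii) delivers that $\varphi = \Theta_{O\theta}(Y)$ is a Boolean $\theta$-congruence.

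For the converse (ii) $\Rightarrow$ (i), I would start from a Boolean $\theta$-congruence $\varphi$ and apply Proposition \ref{L321}(ii) to produce a closed and open $Y \subseteq X(A)$ with both $Y$ and $X(A)\setminus Y$ being $\theta$-subsets and $\varphi = \Theta_{O\theta}(Y)$. Proposition \ref{P322} immediately promotes $Y$ to a modal subset, so $Y \in \mathcal{CO}_M(X(A))$; Theorem \ref{T321} then identifies $\varphi = \Theta_{OM}(Y)$ as a Boolean congruence.

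There is essentially no obstacle in the argument itself, which is a bookkeeping translation between two dual descriptions of the same family of closed, open and modal subsets of $X(A)$. The genuine content lies in the already established Propositions \ref{P321} and \ref{P322} and Corollary \ref{C322}, whose topological arguments (net convergence combined with the density condition (lP6)) are what make the semimodal/modal/$\theta$-subset distinction collapse for closed-open sets; once those are in hand, the equivalence of Boolean and Boolean $\theta$-congruences follows immediately.
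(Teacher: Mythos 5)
Your proposal is correct and follows essentially the same route as the paper, which likewise derives both implications from Theorem \ref{T321}, Proposition \ref{P322} and Proposition \ref{L321} by identifying Boolean congruences and Boolean $\theta$-congruences with the same closed, open and modal subsets of $X(A)$. You merely spell out the bookkeeping (including the agreement of $\Theta_{OS}$ and $\Theta_{O\theta}$ on their common domain) that the paper leaves implicit.
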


\begin{proof}
(i)\, $\Rightarrow$\, (ii): It is a direct consequence of Theorem \ref{T321} and Propositions \ref{P322} and \ref{L321}.

\vspace{2mm}

(ii)\, $\Rightarrow$\, (i): It follows immediately.
\end{proof}

\begin{cor}\label{C325} 
Let  $A$ be an  ${\rm LM}_{\theta}-$algebra and let ${\L}_{\theta}(A)$ be the ${\rm l}_{\theta}$P--space associated with $A$. Then each Boolean congruence on  $A$ is both a principal congruence 
 and $\theta-$congruence on  $A$. 
\end{cor}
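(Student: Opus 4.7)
The plan is to use Theorem \ref{T321} together with Proposition \ref{P323} to reduce an arbitrary Boolean congruence to one generated by a Boolean element, and then to verify principality by substituting that element into the formula of Proposition \ref{C21}(iii); the $\theta$-congruence half of the statement will follow immediately from Corollary \ref{C324}, so the only real content is showing principality.

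I would begin with an arbitrary Boolean congruence $\varphi$ on $A$. Theorem \ref{T321} supplies $Y \in \mathcal{CO}_{M}(X(A))$ with $\varphi = \Theta_{OS}(Y)$, and Proposition \ref{P323} produces $b \in C(A)$ such that $Y = \sigma_A(b)$. My candidate principal congruence is $\Theta(0,b)$. Applying Proposition \ref{C21}(iii) to the pair $(0,b)$ yields the generating open set
\[
G \;=\; \bigl(\sigma_A(b)\setminus\sigma_A(0)\bigr) \cup \bigcup_{i\in I}(f_i^A)^{-1}\bigl(\sigma_A(b)\setminus\sigma_A(0)\bigr) \;=\; \sigma_A(b)\cup\bigcup_{i\in I}(f_i^A)^{-1}(\sigma_A(b)).
\]
Now the Boolean character of $b$ enters: by condition (L6)(iv), $\phi_i b = b$ for every $i \in I$, and translating this through (A1)--(A2) gives $(f_i^A)^{-1}(\sigma_A(b)) = \sigma_A(\phi_i b) = \sigma_A(b) = Y$. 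Hence $G = Y$, and Theorem \ref{T21}(i) then forces $\Theta(0,b) = \Theta_{OS}(G) = \Theta_{OS}(Y) = \varphi$, so $\varphi$ is principal.

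For the $\theta$-congruence clause I would simply invoke Corollary \ref{C324}, which asserts that every Boolean congruence on $A$ is in fact a Boolean $\theta$-congruence, hence a $\theta$-congruence. I do not anticipate any real obstacle in either step; the key observation is that the ``$f$-saturation'' $G \mapsto G \cup \bigcup_{i\in I}(f_i^A)^{-1}(G)$ appearing in Proposition \ref{C21}(iii) acts trivially on $\sigma_A(b)$ when $b$ is Boolean, and this triviality is a direct consequence of (L6). If one wished, a parallel computation using Proposition \ref{P25}(ii) together with the fact that $X(A)\setminus\sigma_A(b)$ is closed, open and modal (by Corollary \ref{C321}(i) and Proposition \ref{P322}) would show $\varphi = \Theta_{\theta}(0,b)$ as well, so $\varphi$ is even a principal $\theta$-congruence.
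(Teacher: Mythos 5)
Your proof is correct, but it follows a genuinely different route from the paper's. The paper never names a generating pair: it takes the clopen modal set $G$ with $\varphi=\Theta_{OM}(G)$, observes that ${\cal C \cal O}_{M}(X(A))$ is contained both in ${\cal O}_{CS}(X(A))$ and in ${\cal O}_{C\theta}(X(A))$, and then verifies the abstract criteria of Corollaries \ref{C23} and \ref{P26} directly --- using that a modal set is convex (Corollary \ref{C321}) and satisfies $G=\bigcup_{i\in I}{f_i^A}^{-1}(G)$, so one may take $R=G$ in Corollary \ref{C23}, and that $X(A)\setminus G$ is a closed $\theta$-subset, which collapses the closure in Corollary \ref{P26}. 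You instead pass through Proposition \ref{P323} to replace $G$ by a Boolean element $b$ with $G=\sigma_A(b)$ and exhibit the explicit generator: $\varphi=\Theta(0,b)$, checked by showing the $f$-saturation in Proposition \ref{C21}(iii) fixes $\sigma_A(b)$ because $\phi_i b=b$. What your version buys is an explicit generating pair, which connects naturally with the description $\varphi={\bf\Theta}([\phi_i a))$ of Proposition \ref{P324}; what the paper's version buys is that it works uniformly for both halves of the statement without invoking the Boolean-element correspondence. One caution: the paper's proof establishes that $\varphi$ is a \emph{principal} $\theta$-congruence, and that is evidently the intended reading of the statement; your main line, which only cites Corollary \ref{C324}, delivers merely that $\varphi$ is a $\theta$-congruence. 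Your closing remark --- that $X(A)\setminus\sigma_A(b)$ is a closed, open $\theta$-subset (Proposition \ref{P322}), so the closure in Proposition \ref{P25}(ii) collapses and $\varphi=\Theta_\theta(0,b)$ --- is what actually matches the paper's conclusion, so it should be promoted from an aside to part of the argument.
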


\begin{proof} 
Let $\varphi$ be  a Boolean ${\rm LM}_{\theta}-$congruence on $A$. Then, Theorem \ref{T321} implies that there is $G \in {\cal C \cal O}_{M}(X(A))$ such that $\varphi =  \Theta_{OM}(G)$. Furthermore, Proposition \ref{P322} and  Corollary \ref{C321} imply that  ${\cal C \cal O}_{M}(X(A)) \subseteq {\cal O}_{CS}(X(A))$. Hence, we have that $G \in {\cal O}_{CS}(X(A))$ and so,  $\Theta_{OM}(G) = \Theta_{OS}(G)=\varphi$. Since $G$ is modal, by Corollary \ref{C321} we infer that $G$ is convex. From this last assertion and Corollary \ref{C23} we conlude that $\varphi$ is a principal ${\rm LM}_{\theta}-$ congruence  on $A$. 

On the other hand, from Proposition \ref{P322} and  Corollary \ref{C321} we have that ${\cal C \cal O}_{M}(X(A)) \subseteq {\cal O}_{C\theta}(X(A))$ from which we get that $\Theta_{OM}(G) = \Theta_{O\theta}(G)=\varphi$. Hence, by  Proposition \ref{P322} we conclude that $G$  is a closed and  open  $\theta-$subset of $X(A)$ and so, from  Corollary \ref{C322} we infer that $X(A)\setminus G$ is a closed $\theta-$subset of $X(A)$. This statement means that $X(A)\setminus G =\overline 
{\bigcup\limits_{\scriptstyle i \in I} f^A_i (X(A)\setminus G)}$. On the other hand since  $X(A)\setminus G$ is modal, 
$X(A)\setminus G =\,\bigcap\limits_{\scriptstyle i \in I} {f^A _i}^{-1}(X(A)\setminus G)$. These last assertions imply that  $G = X(A)\setminus \overline {\bigcup\limits_{\scriptstyle i \in I} f^A_i(\bigcap\limits_{\scriptstyle i \in I} {f^A _i}^{-1}(X(A) \setminus G))}$ and since $G$ is convex, Corollary \ref{P26} allows us to conclude that $\varphi$ is a principal $\theta-$congruence on $A$.
\end{proof}

\begin{cor}\label{C326}
Let  $A$ be an ${\rm LM}_{\theta}-$algebra. Then the Boolean algebras \\ $Con_{bLM_{\theta}}(A)$ and $C(A)$ are isomorphic and therefore, $|Con_{bLM_{\theta}}(A)|= |C(A)|$,  where $|Z|$ denotes the cardinality of the set $Z$.
\end{cor}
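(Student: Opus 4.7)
The plan is to obtain the isomorphism by composing two Boolean isomorphisms that were already established earlier in the paper, namely the one in Theorem \ref{T321} between $Con_{bLM_{\theta}}(A)$ and $\mathcal{C}\mathcal{O}_{M}(X(A))$, and the one in Corollary \ref{C323} between $\mathcal{C}\mathcal{O}_{M}(X(A))$ and $C(A)$. The cardinality equality is then a trivial consequence of the existence of a bijection.

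More concretely, I would first invoke Theorem \ref{T321} to assert that the map $\Theta_{OM}\colon \mathcal{C}\mathcal{O}_{M}(X(A))\to Con_{bLM_{\theta}}(A)$ is a lattice isomorphism; since $Con_{bLM_{\theta}}(A)$ is known to be a Boolean algebra and $\mathcal{C}\mathcal{O}_{M}(X(A))$ is a Boolean lattice (as noted just before Corollary \ref{C323}), this is automatically an isomorphism of Boolean algebras. Next, I would apply Corollary \ref{C323}, which tells us that the restriction of $\sigma_A$ to $C(A)$ yields a Boolean isomorphism $\sigma_A|_{C(A)}\colon C(A)\to \mathcal{C}\mathcal{O}_{M}(X(A))$.

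Composing these two maps produces the desired Boolean isomorphism $\Theta_{OM}\circ \sigma_A|_{C(A)}\colon C(A)\to Con_{bLM_{\theta}}(A)$, whose explicit action sends each Boolean element $b\in C(A)$ to the Boolean congruence $\Theta_{OM}(\sigma_A(b))=\{(x,y)\in A\times A : \sigma_A(x)\triangle\sigma_A(y)\subseteq \sigma_A(b)\}$. The cardinality statement $|Con_{bLM_{\theta}}(A)|=|C(A)|$ is then immediate from the existence of this bijection.

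There is essentially no obstacle: all the substantive work has been done in Theorem \ref{T321} (characterising Boolean congruences via closed, open and modal subsets) and in Proposition \ref{P323} and its Corollary \ref{C323} (identifying $\mathcal{C}\mathcal{O}_{M}(X(A))$ with $C(A)$ via $\sigma_A$). The only care required is to observe that both intermediate isomorphisms preserve the Boolean structure, which is transparent because the operations on $\mathcal{C}\mathcal{O}_{M}(X(A))$ are simply the lattice-theoretic ones inherited from $D(X(A))$ together with set-theoretic complementation, and $\sigma_A$ is already an $\mathrm{LM}_{\theta}$-isomorphism onto its image.
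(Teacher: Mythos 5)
Your proposal is correct and follows exactly the paper's own route: the paper proves this corollary as "a direct consequence of Corollary \ref{C323} and Theorem \ref{T321}", i.e.\ precisely the composition of the two Boolean isomorphisms you describe. Your additional remarks on why the composition preserves the Boolean structure only make explicit what the paper leaves implicit.
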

 \begin{proof}  It is a direct consequence of Corollary \ref{C323} and  Theorem \ref{T321}. 
\end{proof}

\begin{cor}\label{C415} 
Let  $A$ be an ${\rm LM}_{\theta}-$algebra let ${\L}_{\theta}(A)$ be the ${\rm l}_{\theta}$P--space as\-so\-cia\-ted with $A$.
Then, Boolean congruences on $A$ are permutable. 
\end{cor}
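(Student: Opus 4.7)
The plan is to exploit the explicit description of Boolean congruences furnished by Theorem \ref{T321}, combined with the correspondence between closed, open and modal subsets of $X(A)$ and Boolean elements of $A$ supplied by Proposition \ref{P323} and Corollary \ref{C323}. For $e \in C(A)$, condition (L6) yields $\phi_i e = e$ for every $i \in I$, and then (L2) identifies $e' := \overline{\phi}_{i_0} e$ as a Boolean complement of $e$ in $A$; uniqueness of Boolean complements in the underlying distributive lattice makes $e'$ independent of the choice of $i_0$. Since $e \vee e' = 1$ and $e \wedge e' = 0$, prime-filter reasoning shows that $\sigma_A(e') = X(A) \setminus \sigma_A(e)$. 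Unpacking the definition of $\Theta_{OS}$ from Theorem \ref{T21}, a routine computation then yields the clean algebraic characterization
\[
(a,b) \in \Theta_{OS}(\sigma_A(e)) \ \Longleftrightarrow\ a \wedge e' = b \wedge e'.
\]

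Given this algebraic description, permutability will follow by a Mal'cev-style argument. Let $\varphi_{e_1}$ and $\varphi_{e_2}$ be Boolean congruences on $A$ corresponding to $e_1, e_2 \in C(A)$, and suppose $(a,c) \in \varphi_{e_1} \circ \varphi_{e_2}$ with witness $b \in A$ satisfying $a \wedge e_1' = b \wedge e_1'$ and $b \wedge e_2' = c \wedge e_2'$. Meeting the first identity with $e_2'$ and the second with $e_1'$ yields $a \wedge e_1' \wedge e_2' = c \wedge e_1' \wedge e_2'$. I would then set $b' := (a \wedge e_2') \vee (c \wedge e_1') \in A$ and use distributivity of the underlying lattice together with the preceding identity to check directly that $b' \wedge e_2' = a \wedge e_2'$ and $b' \wedge e_1' = c \wedge e_1'$. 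Hence $(a,b') \in \varphi_{e_2}$ and $(b',c) \in \varphi_{e_1}$, which shows $\varphi_{e_1} \circ \varphi_{e_2} \subseteq \varphi_{e_2} \circ \varphi_{e_1}$; the reverse inclusion follows by swapping the roles of $e_1$ and $e_2$.

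The step that requires real attention is the passage from the topological criterion $\sigma_A(a) \triangle \sigma_A(b) \subseteq \sigma_A(e)$ to the algebraic equation $a \wedge e' = b \wedge e'$; once this translation is settled, the remainder of the argument is essentially the classical Mal'cev term for congruence-permutability on Boolean algebras, transferred verbatim to the ambient LM$_\theta$-algebra. Note in particular that since $e_1', e_2' \in A$, the witness $b'$ is already a genuine element of $A$, so no auxiliary enlargement of the universe is needed.
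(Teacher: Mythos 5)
Your proposal is correct, and it reaches the conclusion by a genuinely different route from the paper. The paper works entirely on the dual space: it writes $\varphi_j=\Theta_S(Y_j)$ with $Y_j$ closed, open and modal (Theorem \ref{T321}), and from $(x,z)\in\varphi_1$, $(z,y)\in\varphi_2$ it glues a witness $w=\sigma_A^{-1}\bigl((\sigma_A(x)\cap Y_2)\cup(\sigma_A(y)\cap(Y_1\setminus Y_2))\bigr)$, which obliges it to invoke Corollary \ref{C321} and Proposition \ref{P323} to check that this union is increasing, closed and open, hence in $D(X(A))$. You instead push the representation down into the algebra, identifying each Boolean congruence with the congruence of the principal filter generated by a Boolean element; your equivalence $(a,b)\in\Theta_{OS}(\sigma_A(e))\Leftrightarrow a\wedge e'=b\wedge e'$ is exactly the content of the paper's later Proposition \ref{P324}, obtained by the same $\sigma_A$-translation you sketch (the prime-filter identity $\sigma_A(e')=X(A)\setminus\sigma_A(e)$ and the fact that $\sigma_A$ is injective). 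Once that is in hand, your witness $(a\wedge e_2')\vee(c\wedge e_1')$ works by a two-line distributive-lattice computation using $a\wedge e_1'\wedge e_2'=c\wedge e_1'\wedge e_2'$, and both inclusions follow by symmetry. The two witnesses in fact correspond to one another under $\sigma_A$ up to how the overlap region is assigned, so the underlying Mal'cev-style patching idea is the same; what your version buys is that all the topology is concentrated in the single translation step and the permutability argument itself becomes pure lattice algebra, at the cost of essentially re-deriving Proposition \ref{P324} ahead of where the paper proves it.
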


\begin{proof}
Let $\varphi_1$, $\varphi_2 \in Con_{bLM_{\theta}}(A)$. Then, by Theorem \ref{T321} there are closed, open and  modal  subsets  $Y_1$, $Y_2$ of $X(A)$ such that  $\theta_S (Y_1) = \varphi_1$ and $\theta_S (Y_2) = \varphi_2$.
Suppose now that  $(x, y) \in \varphi_2 \circ \varphi_1$. Hence, there is  $z \in A$ such that  
$(x,z)\in \varphi_1$ and  $(z,y) \in \varphi_2$ and so, from Theorem \ref{T321} we have that 
$\sigma_A (x)\cap Y_1 = \sigma_A (z)\cap Y_1$  and  $\sigma_A (y) \cap Y_2 = \sigma_A (z) \cap Y_2 $. 
These statements imply that $\sigma_A (x) \cap (Y_1 \cap Y_2) = \sigma_A (y)\cap (Y_1 \cap Y_2)$. On the other hand, since $Y_1, Y_2 \in {\cal C \cal O}_{M}(X(A))$, by  Corollary \ref{C321} and  Proposition  \ref{P323} we infer that $(\sigma_A (x) \cap (Y_1 \cap Y_2)) \cup (\sigma_A (x) \cap (Y_2 \setminus Y_1)) \cup (\sigma_A (y) \cap (Y_1 \setminus Y_2)) \in D(X(A))$ and so, $w = \sigma_A^{-1}((\sigma_A (x) \cap (Y_1 \cap Y_2)) \cup (\sigma_A (x) \cap (Y_2 \setminus Y_1)) \cup (\sigma_A (y) \cap (Y_1 \setminus Y_2))) \in A$. Furthermore, we have that  
 $\sigma_A (x) \cap Y_2=\sigma_{A}(w) \cap Y_2$ and $\sigma_{A}(w) \cap Y_1 = \sigma_A (y) \cap Y_1$, hence $(x,w) \in \varphi_2$ and  $(w,y) \in \varphi_1$. Therefore, $(x,y) \in \varphi_1 \circ \varphi_2$  from which  we conclude that  $\varphi_2 \circ \varphi_1\subseteq \varphi_1 \circ \varphi_2$. The other inclusion follows similarly. 
\end{proof}

\

Next, we will give another characterization of the Boolean congruences which will be useful in order to  determine some properties of them. 

\

\begin{lem}\label{L414} 
Let  $A$ be an  ${\rm LM}_{\theta}-$algebra and let ${\L}_{\theta}(A)$ be the ${\rm l}_{\theta}$P--space associated with $A$. Then ${\bf\Theta}([\phi_i a))$ is an congruence on $A$ for all $a \in A$ and for all  $i \in I$.
\end{lem}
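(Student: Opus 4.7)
The plan is to route through the Priestley duality of Section~\ref{s02} and reduce the claim to a one-line check using axiom (L3). Under the filter/closed-increasing-set correspondence recalled just before Proposition~\ref{P22b}, the principal filter $F = [\phi_i a)$ corresponds to $Y_F = \bigcap\{\sigma_A(b) : b \in F\}$; since $\sigma_A$ is a lattice embedding into $D(X(A))$, this intersection collapses to $Y_F = \sigma_A(\phi_i a)$, which is in fact clopen. Moreover ${\bf\Theta}([\phi_i a)) = \Theta(Y_F)$ as lattice congruences, so by (A4) it suffices to show that $\sigma_A(\phi_i a)$ is semimodal: then $\Theta_S(\sigma_A(\phi_i a))$ lies in $Con_{LM_\theta}(A)$ and coincides with ${\bf\Theta}([\phi_i a))$, which is precisely the desired conclusion.

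To establish semimodality, I would fix $j \in I$ and $P \in \sigma_A(\phi_i a)$, so that $\phi_i a \in P$, and verify that $f_j^A(P) \in \sigma_A(\phi_i a)$. By definition $f_j^A(P) = \phi_j^{-1}(P)$, so this amounts to $\phi_j(\phi_i a) \in P$. Axiom (L3) gives $\phi_j \phi_i a = \phi_i a$, which is in $P$ by assumption, so $f_j^A(P) \in \sigma_A(\phi_i a)$ as required. Hence $\bigcup_{j\in I} f_j^A(\sigma_A(\phi_i a)) \subseteq \sigma_A(\phi_i a)$.

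I do not anticipate any serious obstacle: the whole argument hinges on the single identity $\phi_j \phi_i a = \phi_i a$, which is a dual reformulation of the Boolean character of $\phi_i a$ recorded in (L6). The only bookkeeping point is to keep straight that semimodality of the associated dual set is precisely what upgrades a generic bounded-distributive-lattice congruence into an ${\rm LM}_\theta$-congruence, via the correspondence (A4); this step is what turns the set-theoretic calculation above into the desired statement about ${\bf\Theta}([\phi_i a))$.
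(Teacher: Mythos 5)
Your proposal is correct and follows essentially the same route as the paper: identify ${\bf\Theta}([\phi_i a))$ with $\Theta_S(\sigma_A(\phi_i a))$ via the filter/closed-increasing-set correspondence, observe that $\sigma_A(\phi_i a)$ is clopen and semimodal, and invoke (A4). The only cosmetic difference is that you verify semimodality by computing directly with prime filters using (L3), whereas the paper notes $\sigma_A(\phi_i a)={f^A_i}^{-1}(\sigma_A(a))$ and deduces modality from (lP5); these are dual formulations of the same identity.
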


\begin{proof} 
Since $\sigma_A(\phi_i(a))= {f^A_i}^{-1}(\sigma_A(a))$, by (lP5) we have that $\sigma_A(\phi_i(a))$ is a modal subset of $X(A)$ and so,  it is semimodal. Then, by (A4) we infer that $\Theta_S(\sigma_A(\phi_i(a))$ is a congruence  on  $A$. Bearing in mind the definition of $\Theta_S(\sigma_A(\phi_i(a)))$, we conclude that $\Theta_S(\sigma_A(\phi_i(a))= {\bf\Theta} ([\phi_i(a))$ which completes the proof. 
\end{proof}

\begin{propo}\label{P324} 
Let $A$ be an  ${\rm LM}_{\theta}-$algebra and $\varphi \in Con_{Lk_{\theta}}(A)$. Then the following conditions are equivalent:

\begin{itemize}
\item[ \rm{(i)}]  $\varphi$ is a Boolean congruence on $A$,
\item[\rm{(ii)}] there is $a \in A$ and $i\in I$ such that $\varphi= {\bf \Theta}([\phi_i a))$.
\end{itemize}
\end{propo}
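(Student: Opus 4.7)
The plan is to chain together several results already established: Theorem \ref{T321} (the correspondence between Boolean congruences and closed-open-modal subsets via $\Theta_{OM}$/$\Theta_{CM}$), Proposition \ref{P323} (closed-open-modal subsets are exactly the sets $\sigma_A(b)$ for $b \in C(A)$), the equivalence (L6) (Boolean elements are precisely those of the form $\phi_i a$), and the filter/congruence correspondence recalled just before Proposition \ref{P22b} (which gives ${\bf\Theta}(F) = \Theta(Y_F)$ with $Y_F = \bigcap\{\sigma_A(c) : c \in F\}$).

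For the direction (ii) $\Rightarrow$ (i), I would start from $\varphi = {\bf\Theta}([\phi_i a))$. The key observation, already exploited in the proof of Lemma \ref{L414}, is that $\bigcap\{\sigma_A(c) : c \in [\phi_i a)\} = \sigma_A(\phi_i a)$, since $\phi_i a$ is the minimum of the filter and $\sigma_A$ is order-preserving. Hence ${\bf\Theta}([\phi_i a)) = \Theta_S(\sigma_A(\phi_i a))$. By (L6), $\phi_i a \in C(A)$, so Proposition \ref{P323} gives $\sigma_A(\phi_i a) \in {\cal C \cal O}_M(X(A))$. Theorem \ref{T321} then tells us that $\Theta_S(\sigma_A(\phi_i a)) = \Theta_{CM}(\sigma_A(\phi_i a))$ is a Boolean congruence on $A$.

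For the direction (i) $\Rightarrow$ (ii), if $\varphi \in Con_{bLM_\theta}(A)$, Theorem \ref{T321} provides $Y \in {\cal C \cal O}_M(X(A))$ with $\varphi = \Theta_{CM}(Y) = \Theta_S(Y)$. Proposition \ref{P323} supplies a $b \in C(A)$ with $Y = \sigma_A(b)$, and (L6) yields $a \in A$, $i \in I$ with $b = \phi_i a$. Rewinding the identification used in the previous direction, $\Theta_S(\sigma_A(\phi_i a)) = {\bf\Theta}([\phi_i a))$, and therefore $\varphi = {\bf\Theta}([\phi_i a))$.

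There is no real obstacle here, since every ingredient is already in place; the only point that must be made carefully is the identity $\bigcap\{\sigma_A(c) : c \in [\phi_i a)\} = \sigma_A(\phi_i a)$, which links the filter-based notation ${\bf\Theta}([\phi_i a))$ with the Priestley-set-based notation $\Theta_S(\sigma_A(\phi_i a))$ used throughout the section. Once this bridge is stated, the proof reduces to a two-line application of Theorem \ref{T321}, Proposition \ref{P323}, and (L6) in each direction.
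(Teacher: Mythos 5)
Your proposal is correct and follows essentially the same route as the paper: both directions rest on Theorem \ref{T321}, Proposition \ref{P323} and (L6), together with the identification ${\bf\Theta}([\phi_i a))=\Theta_S(\sigma_A(\phi_i a))$, which the paper obtains via Lemma \ref{L414} and an explicit element-wise computation while you derive it from the general filter/closed-increasing-set correspondence. The bridging identity $\bigcap\{\sigma_A(c): c\in[\phi_i a)\}=\sigma_A(\phi_i a)$ that you single out is exactly the point the paper also relies on, so there is no substantive difference.
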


\begin{proof}
(i)\, $\Rightarrow$\, (ii): From the hypothesis and Theorem \ref{T321} there is $Y\in {\cal C\cal O}_{M}(X(A))$ such that  
$\varphi = \Theta_{CM}(Y)$. Besides, from  Proposition \ref{P323} and (L6) there is  $a \in A$ and $Y =\sigma_A(\phi_i(a))$ for some $i\in I$. Therefore,  for all $b,c \in A$ we have that $(b,c)\in \Theta_{CM}(Y)$ if and only if $\sigma_A(b)\cap \sigma_A(\phi_i(a))=\sigma_A(c)\cap \sigma_A(\phi_i(a))$. Hence, taking into account that $\sigma_A$ is an isomorphism it follows that $(b,c)\in \Theta_{CM}(Y)$ if and only if $b\wedge \phi_i(a)=c \wedge \phi_i(a)$. From this last assertion we conclude that $\varphi={\bf\Theta}([\phi_i(a)))$. 

\vspace{2mm}

(ii)\, $\Rightarrow$\, (i): From Lemma \ref{L414} we infer that ${\bf\Theta}([\phi_i a))$ is a congruence on $A$ and ${\bf\Theta}([\phi_i a)) = \Theta_S(\sigma_A(\phi_i(a))$. Besides, from  Proposition \ref{P323},  $\sigma_A(\phi_i(a)) \\ \in {\cal C \cal O}_{M}(X(A))$ and so, by Theorem \ref{T321}  we have that  $\Theta_{CM}(\sigma_A(\phi_i(a))) = \Theta_S(\sigma_A(\phi_i(a)))$ is  a Boolean congruence on  $A$.  
\end{proof}

\begin{cor}\label{C327} 
Let $A$ be an  ${\rm LM}_{\theta}-$algebra. Then the Boolean congruences on $A$ are normal and regular.
\end{cor}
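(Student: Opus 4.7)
The plan is to exploit Proposition \ref{P324}, which establishes that every Boolean congruence $\varphi$ on $A$ has the form $\varphi = \mathbf{\Theta}([\phi_i a))$ for some $a \in A$ and $i \in I$, and hence is the principal filter congruence generated by the Boolean element $b = \phi_i a \in C(A)$ (by (L6)). Writing $\varphi = \mathbf{\Theta}([b))$ with $b$ Boolean, the standard description of filter congruences on a bounded distributive lattice specializes to $(x,y) \in \varphi$ iff $x \wedge b = y \wedge b$, since the filter $[b)$ has a unique minimum $b$. This is the representation I will repeatedly use.

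For \textbf{normality}, I would compute the class of $1$ directly: $(x,1) \in \varphi$ iff $x \wedge b = b$ iff $b \leq x$, so $[1]_\varphi = [b)$. Thus the generating filter is recovered from the $[1]$-class, and since by Corollary \ref{C323} distinct Boolean elements of $C(A)$ correspond to distinct principal filters (and hence to distinct Boolean congruences), two Boolean congruences with equal $[1]$-classes must coincide.

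For \textbf{regularity}, suppose $\theta_1 = \mathbf{\Theta}([b_1))$ and $\theta_2 = \mathbf{\Theta}([b_2))$ are Boolean congruences with $[a]_{\theta_1} = [a]_{\theta_2}$ for some $a \in A$; the goal is to conclude $b_1 = b_2$. The plan is to test membership in both classes using elements built from $a$ and the Boolean complements $\overline{b_1}, \overline{b_2} \in C(A)$, which exist by (L6). Testing with $a \vee \overline{b_1}$ (which lies in $[a]_{\theta_1}$ because $(a \vee \overline{b_1}) \wedge b_1 = a \wedge b_1$) yields $\overline{b_1} \wedge b_2 \leq a \wedge b_2$, and testing with $a \wedge b_1$ yields $a \wedge b_2 \leq b_1$. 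Chaining these gives $\overline{b_1} \wedge b_2 \leq b_1$, and together with $\overline{b_1} \wedge b_1 = 0$ this forces $\overline{b_1} \wedge b_2 = 0$, equivalently $b_2 \leq b_1$. The symmetric argument gives $b_1 \leq b_2$, so $b_1 = b_2$ and $\theta_1 = \theta_2$.

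The main obstacle will be the regularity step: normality follows almost immediately once the filter form of $\varphi$ is in hand, but to recover the generator $b$ from an \emph{arbitrary} class $[a]_\varphi$ one must in effect invert the meet $x \mapsto x \wedge b$, which is possible only because $b$ admits a Boolean complement in $A$. Property (L6) is therefore crucial in the regularity argument, whereas the normality half uses only the principal filter representation provided by Proposition \ref{P324}.
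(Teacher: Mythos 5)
Your proposal starts from the same place as the paper---Proposition \ref{P324} gives $\varphi={\bf \Theta}([\phi_i a))$, hence $(x,y)\in\varphi$ iff $x\wedge\phi_i a=y\wedge\phi_i a$---but it then proves the wrong properties. In the terminology this school inherits from Monteiro, ``normal'' and ``regular'' refer to the structure of the congruence classes: one of the two labels is the assertion that any two classes of $\varphi$ are equipotent, the other that $\varphi$ is determined by any single one of its classes. The paper's proof delivers both at once by computing every class explicitly: $\overline b_\varphi=\{(b\wedge\phi_i a)\vee c: c\in(\overline{\phi}_i a]\}$, so that $\overline 0_\varphi=(\overline{\phi}_i a]$ and the map $c\mapsto(b\wedge\phi_i a)\vee c$ is a bijection of $\overline 0_\varphi$ onto $\overline b_\varphi$ (with inverse $d\mapsto d\wedge\overline{\phi}_i a$), whence $|\overline b_\varphi|=|\overline 0_\varphi|$ for all $b$. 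Your argument never touches the cardinality of the classes, so whichever way the two labels are matched to the two properties, at least one half of the corollary is simply not addressed. You had the right ingredients in hand---the meet description of $\varphi$ and the Boolean complement $\overline{\phi}_i a$---but you stopped short of the class description, which is the actual content of the paper's proof.

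A second, independent gap: both of your determination arguments compare $\varphi$ only with \emph{another Boolean congruence}. Regularity in the intended sense requires that an \emph{arbitrary} congruence of $A$ sharing a class with $\varphi$ coincide with $\varphi$; the injectivity of $b\mapsto{\bf \Theta}([b))$ on $C(A)$ (Corollary \ref{C323}) says nothing about a non-Boolean competitor, for which no generator $b_2$ with a complement is available. Your lattice computations with $a\vee\overline{b_1}$ and $a\wedge b_1$ are correct as far as they go and could be salvaged, but only after reformulating the argument so that $\theta_2$ is arbitrary---for instance by first identifying the shared class as the interval $[a\wedge b_1,(a\wedge b_1)\vee\overline{b_1}]$ and showing that $\varphi$ is the principal congruence generated by its endpoints. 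As written, the proposal does not establish the corollary.
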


\begin{proof}
Let $\varphi$ be a Boolean congruence on $A$. Then, by Proposition \ref{P324} there is $a \in A$ and  $i \in I$ such that $\varphi = {\bf\Theta}([\phi_i a))$. Furthermore, for each $b \in A$ we have that $\overline b_\varphi=
\{(b\,\wedge \phi_ia)\vee c :\, c\in (\overline{\phi}_i a]\}$ where $\overline b_\varphi$ stands for the equivalence class of $b$ modulo $\varphi$. From this last assertion we infer that $\overline 0_\varphi= \left(\overline{\phi}_i a\right]$ and therefore, $\overline b_\varphi=\{(b\,\wedge \phi_ia)\vee c:\, c\in \overline 0_\varphi\}$ and   
$\left|\overline b_\varphi\right|= \left|\overline 0_\varphi\right|$ for all $b \in A$, which allows us to conclude the proof. 
\end{proof}

\

In what follows we will determine necessary and sufficient conditions for a principal congruence on an  ${\rm LM}_{\theta}-$algebra be a Boolean one. These are also sufficient conditions for the fact that the intersection of two principal ${\rm LM}_{\theta}-$congruences be  a principal one.

\begin{propo}\label{PAP324}
Let  $A$ be an  ${\rm LM}_{\theta}-$algebra and let ${\L}_{\theta}(A)$ be the ${\rm l}_{\theta}$P--space associated with $A$. Then the following conditions are equivalent for all $a,b\in A$ such that $a \leqslant b$:

\begin{itemize}
\item[\rm{(i)}]  $\Theta (a,b)$ is a Boolean congruence on $A$,
\item[\rm{(ii)}] $\bigcup\limits_{\scriptstyle i \in I}{f^A_i}^{-1}(\sigma_A(b)\setminus \sigma_A(a))$ is a closed subset of $X(A)$,  
\item[{\rm(iii)}] $\bigcup\limits_{\scriptstyle i \in I}\sigma_A(\phi_i b\wedge \overline{\phi}_i a)$ a closed subset of $X(A)$.
\end{itemize}
\end{propo}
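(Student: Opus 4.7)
The plan is to set $S=\sigma_A(b)\setminus\sigma_A(a)$ and $U=\bigcup_{i\in I}(f_i^A)^{-1}(S)$. Using (A1) and the fact that $\sigma_A$ is an ${\rm LM}_{\theta}$-isomorphism one gets $(f_i^A)^{-1}(S)=\sigma_A(\phi_i b\wedge\overline{\phi}_i a)$ for each $i\in I$, so conditions (ii) and (iii) describe literally the same set and are trivially equivalent. Moreover, Proposition \ref{C21} identifies the open subset $G$ with $\Theta_{OS}(G)=\Theta(a,b)$ as $G=S\cup U$, and by Theorem \ref{T321} condition (i) amounts to $G$ being closed, open and modal. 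The real task is therefore (i)$\,\Leftrightarrow\,$(ii).

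A preliminary fact I plan to use in both directions is that $U$ is always semimodal: if $x\in U$, say $f_i^A(x)\in S$, then for any $j\in I$ condition (lP5) gives $f_i^A(f_j^A(x))=f_i^A(x)\in S$, so $f_j^A(x)\in(f_i^A)^{-1}(S)\subseteq U$. Combined with Proposition \ref{P322}, this forces $U$ to be modal as soon as it is closed.

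For (i)$\,\Rightarrow\,$(ii) I plan to exploit the modality of $G$ directly. Given $x\in S\subseteq G$ and any fixed $j\in I$, modality places $f_j^A(x)\in G=S\cup U$; a two-case split yields $x\in U$ in either case, namely if $f_j^A(x)\in S$ then $x\in(f_j^A)^{-1}(S)$, while if $f_j^A(x)\in U$ then some $f_k^A(f_j^A(x))=f_k^A(x)$ lies in $S$ by (lP5), placing $x\in(f_k^A)^{-1}(S)$. Hence $S\subseteq U$ and $G=U$, and Theorem \ref{T321} gives $G$ closed, whence $U$ is closed.

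The main obstacle is the converse (ii)$\,\Rightarrow\,$(i), where I have to establish $S\subseteq U$ without modality in hand. The key idea I plan to use is density (lP6): if $S\setminus U$ were nonempty, it would be a nonempty open subset of $X(A)$, since $S$ is closed-open by (A7) (as $\sigma_A(a)\subseteq\sigma_A(b)$) and $U$ is closed by hypothesis. Density of $\bigcup_{i\in I}f_i^A(X(A))$ would then produce a point $y=f_i^A(z)\in S\setminus U$ for some $i$; but (lP5) gives $f_i^A(y)=f_i^A(f_i^A(z))=f_i^A(z)=y\in S$, placing $y\in(f_i^A)^{-1}(S)\subseteq U$, a contradiction. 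Hence $S\subseteq U$, so $G=U$ is closed, open and semimodal, hence modal by Proposition \ref{P322}, and Theorem \ref{T321} delivers $\Theta(a,b)=\Theta_{OM}(G)$ as a Boolean congruence.
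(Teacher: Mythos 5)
Your proof is correct and follows essentially the same route as the paper's: the forward direction extracts closedness of $\bigcup_{i\in I}{f^A_i}^{-1}(\sigma_A(b)\setminus\sigma_A(a))$ from the modality of $G$ guaranteed by Theorem \ref{T321} (you merely spell out the two-case argument the paper leaves implicit), and the converse uses exactly the paper's density argument via (lP6) and (lP5) to show $\sigma_A(b)\setminus\sigma_A(a)\subseteq\bigcup_{i\in I}{f^A_i}^{-1}(\sigma_A(b)\setminus\sigma_A(a))$, after which Proposition \ref{P322} and Theorem \ref{T321} close the loop. The identification of (ii) with (iii) via $\sigma_A$ is likewise the paper's.
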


\begin{proof}
(i)\, $\Rightarrow$\, (ii): From the hypothesis and Proposition \ref{C21} we have that $\Theta(a,b)=\Theta_{OS}(G)$, where $G =(\sigma_A(b)\setminus \sigma_A(a))\cup \bigcup\limits_{\scriptstyle i \in I} {f^A_i }^{-1}(\sigma_A(b)\setminus \sigma_A(a))$ and taking into account  Theorem \ref{T321} we infer that $G$ is a closed, open and modal subset of $X(A)$. This last assertion allows us to conclude  that $G =\bigcup\limits_{\scriptstyle i \in I} {f^A_i }^{-1}(\sigma_A(b) \setminus \sigma_A(a))$ and so, the  proof is completed.

\vspace{2mm}

(ii)\, $\Rightarrow$\, (i): Let (1) $G =\bigcup\limits_{\scriptstyle i \in I} {f^A_i}^{-1}(\sigma_A(b) \setminus \sigma_A(a))$. Then from the hypothesis, (lP2) and (lP5) we have that (2) $G \in {\cal C \cal O}_{M}(X(A))$, and as a consequence of  Theorem \ref{T321} we infer that $\Theta_{OM}(G)$ is a Boolean congruence on $A$. On the other hand,  from (2), Proposition \ref{P322} and Corollary \ref{C321} it follows that $G \in {\cal O}_{CS}(X(A))$ and so, 
(3) $\Theta_{OM}(G)=\Theta_{OS}(G)$. Besides, $\sigma_A(b) \setminus \sigma_A(a)\subseteq G$. Indeed, suppose that 
$(\sigma_A(b) \setminus \sigma_A(a)) \setminus G \not= \emptyset$. Since $G$ is a closed subset of $X(A)$, by (lP6) we have that $((\sigma_A(b)\setminus \sigma_A(a))\setminus  G)\cap \bigcup\limits_{\scriptstyle i \in I} {f^A_i }(X(A)) \not= \emptyset$, from which we infer that there are (4) $x \in (\sigma_A(b)\setminus \sigma_A(a))\setminus G$ and  $y \in X(A)$ such that $x = f_{i_0}(y)$ for some  $i_0 \in I$. Then by (lP5), $x = f_{i_0}(x)$ for some  $i_0 \in I$. This  statement, (1) and (4) imply that $x\in G$, which contradicts (4). On the other hand, it follows immediately that  $G$  is the least element of ${\cal O}_{CS}(X(A))$, ordered by inclusion such that $\sigma_A(b) \setminus \sigma_A(a)\subseteq G$. Hence, by Proposition \ref{C21} and (3) we conclude that $\Theta(a,b)=\Theta_{OM}(G)$. Therefore, $\Theta(a,b)$ is a Boolean congruence on $A$.
 
\vspace{2mm}

(ii)\, $\Leftrightarrow$\, (iii): It is a direct consequence of the fact that $\sigma_A$ is an iso\-mor\-phism.
\end{proof}

\begin{lem}\label{L323}
Let $(X,\{f_{i}\}_{i\in I})$ be an ${\rm l}_{\theta}$P--space. If $R \subseteq X$ is such that $R \subseteq \bigcup\limits_{\scriptstyle i \in I}{f_i}^{-1}(R)$, then $\bigcup\limits_{\scriptstyle i \in I}{f_i}^{-1}(R)=(R] \cup [R)$
\end{lem}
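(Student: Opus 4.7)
The plan is to verify the two inclusions of the asserted set equality separately, relying almost entirely on the stated properties (lP3) and (lP8).

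For the inclusion $\bigcup_{i\in I}f_i^{-1}(R)\subseteq (R]\cup[R)$ I would start from an arbitrary $x\in f_i^{-1}(R)$, so that $f_i(x)\in R$. Property (lP8) then supplies the dichotomy $x\leq f_i(x)$ or $f_i(x)\leq x$. In the first case $x$ lies below the element $f_i(x)\in R$, so $x\in(R]$; in the second $x$ lies above $f_i(x)\in R$, so $x\in[R)$. This direction does not require the hypothesis on $R$.

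For the reverse inclusion $(R]\cup[R)\subseteq\bigcup_{i\in I}f_i^{-1}(R)$ I would take $x\in(R]\cup[R)$ and pick $r\in R$ which is comparable to $x$ (i.e.\ $x\leq r$ or $r\leq x$). The hypothesis $R\subseteq\bigcup_{i\in I}f_i^{-1}(R)$ applied to $r$ yields some $j\in I$ with $f_j(r)\in R$. The key observation is then (lP3): since $x$ and $r$ are comparable, $f_j(x)=f_j(r)$, and hence $f_j(x)\in R$, i.e.\ $x\in f_j^{-1}(R)\subseteq\bigcup_{i\in I}f_i^{-1}(R)$.

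I do not foresee any real obstacle here; the proof is essentially a two-line bookkeeping argument using (lP8) for the forward direction and (lP3) together with the hypothesis for the reverse direction. The only point worth flagging is that the hypothesis $R\subseteq\bigcup_{i\in I}f_i^{-1}(R)$ is genuinely needed in the reverse inclusion to guarantee that each $r\in R$ already lives in some $f_j^{-1}(R)$, which is what allows (lP3) to transfer the conclusion from $r$ to $x$.
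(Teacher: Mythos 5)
Your proof is correct and follows essentially the same route as the paper: the forward inclusion via (lP8), and the reverse inclusion by choosing a comparable $r\in R$, applying the hypothesis to obtain $f_j(r)\in R$, and transferring this to $x$ via (lP3). No discrepancies to report.
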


\begin{proof} 
Suppose that $f_i(x)\in R$ for some $i\in I$. Hence, by (lP8) we have that $x \in (R]$ or $x \in [R)$. Conversely, let $x \in (R] \cup [R)$. Then, there is  $y \in R$ such that $x \leqslant y$ or $ y \leqslant x$ and so, by (lP3) we infer that  $f_i(x) = f_i(y)$ for all $i \in I$. This assertion and the fact that from the hypothesis  
$f_{i_0}(y)\in R$ for some $i_0 \in I$ allow us to conclude that $x \in \bigcup\limits_{\scriptstyle i \in I} {f_i}^{-1}( R)$.
\end{proof}

\begin{lem}\label{L324} 
Let $(X,\{f_{i}\}_{i \in I})$ be an  ${\rm l}_{\theta}$P--space and  $R$ be a closed and open subset of $X$. Then the following conditions are equivalent:

\begin{itemize}
\item[\rm{(i)}] $R \subseteq \bigcup\limits_{\scriptstyle i \in I} {f_i} ^{-1}(R)$,
\item[\rm{(ii)}] $\bigcup\limits_{\scriptstyle i \in I}{f_i}^{-1}(R)$ is closed.
\end{itemize}
\end{lem}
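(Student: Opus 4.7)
The plan is to handle the two implications separately, with Lemma \ref{L323} doing most of the work in one direction and the density property (lP6) together with (lP5) doing the work in the other.

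For the direction (i) $\Rightarrow$ (ii), the idea is to invoke the previous lemma directly. Under assumption (i), Lemma \ref{L323} yields the equality $\bigcup_{i \in I} f_i^{-1}(R) = (R] \cup [R)$. Since $R$ is closed and property (A6) states that $(A]$ and $[A)$ are closed whenever $A$ is closed, the set $(R] \cup [R)$ is a finite union of closed sets and therefore closed. This is the routine direction.

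For (ii) $\Rightarrow$ (i), which I expect to be the main obstacle, the strategy is to exploit the density property (lP6): $\overline{\bigcup_{i \in I} f_i(X)} = X$. Fix $x \in R$; since $x$ lies in the closure of $\bigcup_{i \in I} f_i(X)$, there is a net $\{x_d\}_{d \in D} \subseteq \bigcup_{i \in I} f_i(X)$ with $x_d \to x$, say $x_d = f_{i_d}(y_d)$ for some $y_d \in X$ and $i_d \in I$. The key observation is that (lP5) forces each such $x_d$ to be a fixed point of $f_{i_d}$: indeed $f_{i_d}(x_d) = f_{i_d}(f_{i_d}(y_d)) = f_{i_d}(y_d) = x_d$.

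Now I use that $R$ is \emph{open}: there is $d_0 \in D$ such that $x_d \in R$ for all $d_0 \prec d$. For every such $d$ one has $f_{i_d}(x_d) = x_d \in R$, so $x_d \in f_{i_d}^{-1}(R) \subseteq \bigcup_{i \in I} f_i^{-1}(R)$. Thus the tail of the net $\{x_d\}$ lies in $\bigcup_{i \in I} f_i^{-1}(R)$, which is closed by hypothesis (ii); consequently its limit $x$ belongs to $\bigcup_{i \in I} f_i^{-1}(R)$. Since $x \in R$ was arbitrary, this gives $R \subseteq \bigcup_{i \in I} f_i^{-1}(R)$, establishing (i). The main subtlety is noticing that the elements of $\bigcup_{i \in I} f_i(X)$ that appear in the approximating net are automatically fixed points of the corresponding $f_{i_d}$; without this remark one cannot transfer $x_d \in R$ to $x_d \in f_{i_d}^{-1}(R)$.
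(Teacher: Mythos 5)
Your proof is correct and follows essentially the same route as the paper: the direction (i) $\Rightarrow$ (ii) is identical (Lemma \ref{L323} plus (A6)), and for (ii) $\Rightarrow$ (i) both arguments rest on the same combination of the density condition (lP6) with the idempotence (lP5), which forces any point of $\bigcup_{i\in I}f_i(X)$ lying in $R$ to lie in $\bigcup_{i\in I}{f_i}^{-1}(R)$. The only difference is presentational: you approximate a given $x\in R$ by a net from $\bigcup_{i\in I}f_i(X)$ and pass to the limit inside the closed set, whereas the paper argues by contradiction, noting that $R\setminus\bigcup_{i\in I}{f_i}^{-1}(R)$ would otherwise be a nonempty open set missing $\bigcup_{i\in I}f_i(X)$.
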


\begin{proof}
(i)\, $\Rightarrow$\, (ii): From the hypothesis and Lemma \ref{L323} we have that $\bigcup\limits_{\scriptstyle i \in I}{f_i}^{-1}(R)= (R]\cup [R)$. Since $R$ is a closed subset of $X$ by (A6) we infer that $(R]\cup [R)$ is closed, which completes the proof.

\vspace{2mm}

(ii)\, $\Rightarrow$\, (i): From the hypothesis and the fact that $R$ is an open subset of $X$ we have that $R\setminus \bigcup\limits_{\scriptstyle i \in I} {f_i}^{-1}(R)$ is open. Suppose now that $R \setminus \bigcup\limits_{\scriptstyle i \in I}{f_i}^{-1}(R)\not=\emptyset$. Then, by (lP6) there are  $x\in X$ and  $i_{0}\in I$ such that  $f_{i_0}(x)\in R \setminus \bigcup\limits_{\scriptstyle i \in I}{f_i}^{-1}(R)$ and from (lP5) we conclude that $f_{i_0}(x)\in R $ and  
$f_{i}(x)\not\in R$ for all $i \in I$, which is a contradiction.  Therefore, $R \subseteq \bigcup\limits_{\scriptstyle i \in I}{f_i}^{-1}(R)$.
\end{proof}

\begin{propo}\label{CP326}%falta
Let $A$ be an ${\rm LM}_{\theta}-$algebra and let ${\L}_{\theta}(A)$ be the ${\rm l}_{\theta}$P--space associated with $A$. Then the following conditions are equivalent for all $a,b \in A$, $a \leqslant b$:  

\begin{itemize}
\item[\rm{(i)}] $\Theta(a,b) = \Theta_{OS}(G)$ is a Boolean congruencia on $A$,
\item[\rm{(ii)}] $G= \bigcup\limits_{\scriptstyle i \in I}\sigma_A(\phi_i b\wedge \overline{\phi}_i a)$ is  a closed subset of $X(A$, 
\item[\rm{(iii)}] there are $i_j\in I$, $1 \leq j \leq n$ such that $G =\sigma_A(\bigvee\limits_{j =1}^{n}(\phi_{i_j}b \wedge \overline{\phi}_{i_j}a))$ and $\sigma_A(b)\setminus\sigma_A(a) \subseteq G$,
\item[\rm{(iv)}] there are $i_j\in I$, $1 \leq j \leq n$ such that $\sigma_A (b) \setminus \sigma_A(b)\subseteq \sigma_A ( \bigvee\limits_{j=1}^{n} (\phi_{i_j} b \wedge 
\overline{\phi}_{i_j}a))$ and  $\Theta_{OS} (G) ={\bf \Theta}([\bigwedge\limits_{j =1}^{n}(\overline{\phi}_{i_j}b \vee \phi_{i_j}a )))$.
\end{itemize}
\end{propo}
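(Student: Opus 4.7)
The plan is to establish the cycle (i) $\Rightarrow$ (ii) $\Rightarrow$ (iii) $\Rightarrow$ (iv) $\Rightarrow$ (i). The computational identity
$${f_i^A}^{-1}(\sigma_A(b)\setminus\sigma_A(a)) = \sigma_A(\phi_i b)\cap\sigma_A(\overline{\phi}_i a) = \sigma_A(\phi_i b\wedge\overline{\phi}_i a),$$
obtained from (A1) together with the fact that $\sigma_A$ is an ${\rm LM}_\theta$-isomorphism, will be the workhorse that translates between the two forms of $G$ appearing in the statement.

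For (i) $\Leftrightarrow$ (ii) I would invoke Proposition \ref{PAP324}, items (i) and (iii). When $\bigcup_{i\in I}\sigma_A(\phi_i b\wedge\overline{\phi}_i a)$ is closed, the reasoning used inside that proof (the step relying on (lP6) and (lP5)) already gives that $\sigma_A(b)\setminus\sigma_A(a)$ sits inside this union, so the $G$ provided by Proposition \ref{C21}(iii) collapses to exactly $\bigcup_{i\in I}\sigma_A(\phi_i b\wedge\overline{\phi}_i a)$. For (ii) $\Rightarrow$ (iii) I would exploit compactness of the Priestley space $X(A)$: the closed set $G$ is compact, each $\sigma_A(\phi_i b\wedge\overline{\phi}_i a)$ is clopen, and so a finite subcover $i_1,\ldots,i_n$ yields
$$G \;=\; \bigcup_{j=1}^{n}\sigma_A(\phi_{i_j} b\wedge\overline{\phi}_{i_j} a) \;=\; \sigma_A\Bigl(\bigvee_{j=1}^{n}(\phi_{i_j} b\wedge\overline{\phi}_{i_j} a)\Bigr),$$
while $\sigma_A(b)\setminus\sigma_A(a)\subseteq G$ is Proposition \ref{C21}(ii).

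For (iii) $\Rightarrow$ (iv), set $c=\bigvee_{j=1}^{n}(\phi_{i_j} b\wedge\overline{\phi}_{i_j} a)$. By (L6) each $\phi_{i_j} b$ and $\overline{\phi}_{i_j} a$ lies in $C(A)$, so each meet is Boolean and hence so is the finite join $c$; its Boolean complement is $c^{*}=\bigwedge_{j=1}^{n}(\overline{\phi}_{i_j} b\vee\phi_{i_j} a)$. Therefore $X(A)\setminus G=\sigma_A(c^{*})$, and since $c^{*}\in C(A)$ this agrees with the closed increasing set $\bigcap\{\sigma_A(x):x\in[c^{*})\}$ associated with the principal filter $[c^{*})$ under the Priestley correspondence recalled just before Proposition \ref{P22b}. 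Hence $\Theta_{OS}(G)=\Theta_{S}(X(A)\setminus G)={\bf\Theta}([c^{*}))$, which is the content of (iv). For (iv) $\Rightarrow$ (i), with the same $c$, Proposition \ref{P323} gives $\sigma_A(c)\in{\cal C \cal O}_{M}(X(A))$, so by Theorem \ref{T321}, $\Theta_{OM}(\sigma_A(c))=\Theta_{OS}(\sigma_A(c))$ is a Boolean congruence. Retracing the bookkeeping of the previous step yields ${\bf\Theta}([c^{*}))=\Theta_{OS}(\sigma_A(c))$, which combined with the hypothesis $\Theta_{OS}(G)={\bf\Theta}([c^{*}))$ and $\Theta(a,b)=\Theta_{OS}(G)$ forces $\Theta(a,b)=\Theta_{OS}(\sigma_A(c))$, a Boolean congruence.

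I expect the main obstacle to be the careful bookkeeping in the final step, where one must verify that the filter-side description ${\bf\Theta}([c^{*}))$ and the space-side description $\Theta_{OS}(\sigma_A(c))$ really name the same congruence; this hinges on the fact that a Boolean $c$ makes $\sigma_A(c)$ simultaneously closed, open and modal, so that its complement is a closed, increasing (and modal) set whose associated filter under Priestley duality is precisely $[c^{*})$.
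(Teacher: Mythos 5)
Your proposal is correct and follows essentially the same route as the paper: Proposition \ref{PAP324} together with the closedness-implies-containment argument (Lemma \ref{L324}) for (i)$\Leftrightarrow$(ii), compactness for the finite join in (iii), and the Boolean-complement/filter correspondence plus Proposition \ref{P323} and Theorem \ref{T321} for (iv). The only difference is bookkeeping: you close the loop via (iv)$\Rightarrow$(i) directly, whereas the paper routes through (iii)$\Rightarrow$(ii)$\Rightarrow$(i) and treats (iii)$\Leftrightarrow$(iv) separately — the ingredients are identical.
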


\begin{proof}
(i)\, $\Rightarrow$\, (ii): From the  hypothesis and Proposition \ref{C21} we have that 
$G =(\sigma_A(b)\setminus \sigma_A(a))\cup \bigcup\limits_{\scriptstyle i \in I} {f^A_i}^{-1}(\sigma_A(b) \setminus\sigma_A(a))$. On the other hand, since $\Theta_{OS}(G)$ is a Boolean congruence, by Proposition \ref{PAP324} we infer that $\bigcup\limits_{\scriptstyle i \in I } {f^A_i}^{-1}(\sigma_A(b) \setminus \sigma_A(a) )$ is a closed subset of $X(A)$. Hence, by Lemma \ref{L324} we have that $\sigma_A(b) \setminus\sigma_A(a)\subseteq \bigcup
\limits_{\scriptstyle i \in I}{f^A_i}^{-1}(\sigma_A(b)\setminus \sigma_A(a))$. Therefore,  
$G = \bigcup\limits_{\scriptstyle i \in I}{f^A_i}^{-1}(\sigma_A(b) \setminus \sigma_A(a))$ and taking into account that $\sigma_A$ is an isomorphism we conclude the proof. 

\vspace{2mm}

(ii)\, $\Rightarrow$\, (iii):  The hypothesis implies that $G = \bigcup\limits_{\scriptstyle i \in I}{f^A_i}^{-1}(\sigma_A(b) \setminus \sigma_A(a) )$ is a closed subset of $X(A)$ and so, from Lemma \ref{L324} we have that $\sigma_A(b) \setminus \sigma_A(a) \subseteq G$.   
To complete the proof note that $\{{f^A _i}^{-1}(\sigma_A(b) \setminus \sigma_A(a)):  i \in I\}$ is an open covering of $G$. Then, a simple compactness argument shows that 
$G = \bigcup\limits_{j =1}^{ n}{f^A_{i_j}}^{-1}(\sigma_A(b) \setminus \sigma_A(a))$, and since $\sigma_A$ is an isomorphism,  we infer that  $G= \sigma_A(\bigvee\limits_{j =1}^{n} (\phi_{i_j}b \wedge \overline{\phi}_{i_j}a))$.

\vspace{2mm}

(iii)\, $\Rightarrow$\, (ii): From the hypothesis, $G = \bigcup\limits_{j =1}^{n}{f^A_{i_j}} ^{-1}(\sigma_A(b) \setminus \sigma_A(a))$ and $\sigma_A(b)\setminus \sigma_A(a) \subseteq  \bigcup\limits_{j =1}^{n}{f^A_{i_j}}^{-1}(\sigma_A(b) \setminus \sigma_A(a))$. Then, by (lP5) we infer that $\bigcup\limits_{\scriptstyle i \in I}{f^A_i}^{-1}(\sigma_A(b) \setminus \sigma_A(a)) = \bigcup\limits_{j =1}^{n}{f^A_{i_j}}^{-1}( \sigma_A(b) \setminus \sigma_A(a) )$. Therefore, $G = \bigcup\limits_{\scriptstyle i \in I} {f^A_{i}}^{-1}(\sigma_A(b) \setminus \sigma_A(a))$ is a closed subset of $X(A)$ and bearing in mind that $\sigma_A$ is an isomorphismo we conclude that $G=\bigcup\limits_{\scriptstyle i \in I}\sigma_A(\phi_i b\wedge \overline{\phi}_i a)$.

\vspace{2mm}

(ii)\, $\Rightarrow$\, (i): It is easy to check that $G$ is an open and modal subset of $X(A)$. Therefore, $G \in {\cal C \cal O}_{M}(X(A))$. Then, taking into account that ${\cal C \cal O}_{M}(X(A)) \subseteq {\cal O}_{CS}(X(A))$ we have that $\Theta_{OM}(G) = \Theta_{OS}(G)$ and so, by Theorem \ref{T321} we conclude that  $\Theta_{OS}(G)$ is a Boolean congruence on $A$. 
Furthermore, since $G$ is a closed subset of $X(A)$ by Lemma \ref{L324}, $\sigma_A(b) \setminus \sigma_A(a) \subseteq \bigcup\limits_{\scriptstyle i \in I}{f^A_{i}}^{-1}( \sigma_A(b) \setminus \sigma_A(a))$ which impies that $G =(\sigma_A(b) \setminus \sigma_A(a))\cup  \bigcup\limits_{\scriptstyle i \in I}{f^A_{i}}^{-1}(\sigma_A(b) \setminus \sigma_A(a))$. This equality and Proposition \ref{C21} imply that $\Theta(a,b) = \Theta_{OS}(G)$ and so, the proof is complete. 

\vspace{2mm}

(iii)\, $\Leftrightarrow$\, (iv): It follows as a consequence of the fact that $\Theta_{OM}(\sigma_A(\bigvee\limits_{j =1}^{n} (\phi_{i_j}b \wedge \overline{\phi}_{i_j}a)))= 
\Theta_S(X(A) \setminus \sigma_A (\bigvee\limits_{j =1}^{n} (\phi_{i_j}b \wedge \overline {\phi}_{i_j}a))) = \Theta _S(\sigma_A (\bigwedge\limits_{j =1}^{n}(\overline{\phi}_{i_j}b \vee \phi_{i_j}a )))$ and that $\Theta_S (\sigma_A (\bigwedge\limits_{j =1}^{n}(\overline{\phi}_{i_j}b \vee \phi_{i_j}a))) = {\bf \Theta}([\bigwedge\limits_{j =1}^{n}(\overline{\phi}_{i_j}b \vee \phi_{i_j}a )))$.
\end{proof}

\

Finally, we will complete this section establishing a characterization of the Boolean congruences on $n-$valued  
\L ukasiewicz algebras.

\vspace{2mm}

\begin{theo}\label{T421}
Let  $A$ be an  ${\rm LM}_{n}-$algebra and let ${\L}(A)$ be the ${\rm l}_{n}$P--space associated with $A$.
Then the lattice  ${\cal C \cal O}_{M}(X(A))$ is isomorphic to the lattice $Con_{bLk_{n}}(A)$ of the Boolean congruences on $A$ and the  isomorphism is the function  $\Theta_{OM}$  defined as in Theorem {\rm\ref{T321}}.
\end{theo}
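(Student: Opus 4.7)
The plan is to derive Theorem \ref{T421} as an immediate specialization of Theorem \ref{T321} to the case $\theta = n$. When $A$ is an LM$_n$-algebra, we may view it as an LM$_\theta$-algebra with $\theta = n$, and under this identification the associated ${\rm l}_{\theta}$P--space coincides with the ${\rm l}_{n}$P--space ${\L}(A)$, since the underlying set of prime filters and the maps $f_i^A = \phi_i^{-1}$ are defined identically in (A2). Moreover, the lattice ${\cal C \cal O}_M(X(A))$ is the same object in both contexts, being defined purely in terms of the topology on $X(A)$ and the operators $\{f_i^A\}_{i \in I}$.

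Next, I would observe that the notion of Boolean congruence is intrinsic to $A$ and does not depend on whether we regard $A$ as an LM$_n$-algebra or as an LM$_\theta$-algebra: the Boolean elements $C(A)$ are characterized via (L6) by conditions that hold identically in both settings, and a congruence is Boolean precisely when it is determined by a principal filter generated by a Boolean element. Hence $Con_{bLk_n}(A) = Con_{bLM_\theta}(A)$ as sublattices of the congruence lattice of $A$. Combining these two observations, the isomorphism $\Theta_{OM}\colon {\cal C \cal O}_M(X(A)) \to Con_{bLM_\theta}(A)$ furnished by Theorem \ref{T321} is, simultaneously, an isomorphism onto $Con_{bLk_n}(A)$, and it is given by the same formula, since $\Theta_{OM}$ is defined uniformly as the restriction of $\Theta_{OS}$ to ${\cal C \cal O}_M(X(A))$.

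I do not foresee any substantial obstacle here: the proof reduces to the short verification that the two duality frameworks coincide for LM$_n$-algebras, after which Theorem \ref{T321} applies directly. For a self-contained argument one could instead argue from Theorem \ref{T21b} (which already establishes ${\cal O}_M(X(A)) \cong Con_{Lk_n}(A)$ via $\Theta_{OM}$) and then carve out the Boolean congruences by showing that they correspond exactly to those open modal subsets which are also closed; this second step would invoke Proposition \ref{P323} to identify ${\cal C \cal O}_M(X(A))$ with $\sigma_A(C(A))$ together with Corollary \ref{C321} to secure convexity, but in the end this merely reproduces the argument of Theorem \ref{T321} in the finite case.
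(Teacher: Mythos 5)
Your proposal is correct and matches the paper's own argument, which simply derives Theorem \ref{T421} as a direct consequence of Theorem \ref{T321} (together with Corollary \ref{C323}) by specializing to $\theta = n$. The alternative self-contained route you sketch via Theorem \ref{T21b} and Proposition \ref{P323} is not what the paper does, but as you note it would only reproduce the same argument in the finite case.
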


\begin{proof} 
It is a direct consequence of Theorem \ref{T321} and  Corollary \ref{C323}. 
\end{proof}

\begin{cor}\label{P410}
Let  $A$ be an  ${\rm LM}_{n}-$algebra. Then Boolean and principal congruences coincide.
\end{cor}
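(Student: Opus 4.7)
The plan is to obtain the corollary by simply assembling the two characterizations already proved for the $n$-valued case, one for principal congruences and one for Boolean congruences, and observing that they single out exactly the same family of subsets of $X(A)$.

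First I would dispose of the inclusion that Boolean congruences are principal: this is nothing but Corollary \ref{C325} specialized from $\theta$ to $n$, which already asserts that every Boolean ${\rm LM}_\theta$-congruence is a principal one. So no further argument is required on that side.

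For the harder inclusion, that every principal ${\rm Lk}_n$-congruence on $A$ is Boolean, I would proceed as follows. Let $\vartheta$ be a principal congruence on $A$. By Proposition \ref{P312}, $\vartheta = \Theta_{OM}(G)$ for some closed, open and modal subset $G$ of $X(A)$, i.e.\ $G\in {\cal C\cal O}_M(X(A))$. But Theorem \ref{T421} tells us precisely that $\Theta_{OM}$ restricted to ${\cal C\cal O}_M(X(A))$ is the isomorphism onto $Con_{bLk_n}(A)$. Hence $\vartheta = \Theta_{OM}(G)$ is a Boolean congruence on $A$, which yields the desired inclusion and completes the proof.

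There is no genuine obstacle here, since the nontrivial content has already been absorbed into the earlier results: Proposition \ref{P312} crucially exploits the $n$-valued-specific coincidence of semimodal, modal and ``cardinal sum of maximal chains'' subsets (Lemma \ref{PN34}), together with the automatic closedness and convexity of open modal sets (Corollary \ref{C321}), to ensure that the same subsets $G\in {\cal C\cal O}_M(X(A))$ simultaneously describe principal and Boolean congruences. The corollary is then just the observation that two isomorphisms with the same domain and target must give the same image.
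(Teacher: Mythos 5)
Your argument is correct and follows essentially the same route as the paper, which derives the corollary directly from Proposition \ref{P312} and Theorem \ref{T421}; the equivalence in Proposition \ref{P312} together with Theorem \ref{T421} already gives both inclusions, so your extra appeal to Corollary \ref{C325} for the Boolean-to-principal direction is harmless but not needed.
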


\begin{proof}
It is a direct consequence of Proposition \ref{P312} and Theorem \ref{T421}. 
\end{proof}

\

\noindent A. V. Figallo.\\ 
Departamento de Matem\'atica. Universidad Nacional del Sur,\\ 
8000 Bah\'{\i}a Blanca, Argentina.\\
Instituto de Ciencias B\'asicas. Universidad Nacional de  San Juan.\\
5400 San Juan, Argentina.\\
{\it e-mail: avfigallo@gmail.com}

\vspace{2mm}

\noindent I. Pascual.\\
Instituto de Ciencias B\'asicas. Universidad Nacional de San Juan.\\
5400 San Juan, Argentina.\\
{\it e-mail:  ipascualdiz@gmail.com}

\vspace{2mm}

\noindent A. Ziliani.\\ 
Departamento de Matem\'atica. Universidad Nacional del Sur.\\
8000 Bah\'{\i}a Blanca, Argentina.\\
{\it e-mail: aziliani@gmail.com}

\end{document}